\providecommand\@dotsep{5}
\def\listtodoname{List of Todos}
\def\listoftodos{\@starttoc{tdo}\listtodoname}
\numberwithin{equation}{section}
\newcommand{\h}{H^{s}_{\e}}
\newcommand{\R}{\mathbb{R}}
\newcommand{\2}{2^{*}_{s}}
\newcommand{\C}{\mathbb{C}}
\newcommand{\N}{\mathcal{N}}
\DeclareMathOperator{\dive}{div}
\DeclareMathOperator{\supp}{supp}
\DeclareMathOperator{\e}{\varepsilon}
\newtheorem{lem}{Lemma}[section]
\newtheorem{thm}{Theorem}[section]
\keywords{Fractional magnetic operators, variational methods, concentration phenomenon}
\subjclass[2010]{35A15, 35R11, 35S05}
\date{}
\begin{document}
\title[Fractional Schr\"odinger-Poisson type equation with magnetic fields]{Existence and concentration of nontrivial solutions for a fractional magnetic Schr\"odinger-Poisson type equation}

\author[V. Ambrosio]{Vincenzo Ambrosio}
\address{Vincenzo Ambrosio\hfill\break\indent 
Department of Mathematics  \hfill\break\indent
EPFL SB CAMA \hfill\break\indent
Station 8 CH-1015 Lausanne, Switzerland}
\email{vincenzo.ambrosio2@unina.it}

\begin{abstract}
We consider the following fractional Schr\"odinger-Poisson type equation with magnetic fields
\begin{equation*}
\varepsilon^{2s}(-\Delta)_{A/\varepsilon}^{s}u+V(x)u+\e^{-2t}(|x|^{2t-3}*|u|^{2})u=f(|u|^{2})u \quad \mbox{ in } \mathbb{R}^{3},
\end{equation*}
where $\varepsilon>0$ is a parameter, $s\in (\frac{3}{4}, 1)$, $t\in (0,1)$, $(-\Delta)^{s}_{A}$ is the fractional magnetic Laplacian, $A:\mathbb{R}^{3}\rightarrow \mathbb{R}^{3}$ is a smooth magnetic potential, $V:\R^{3}\rightarrow \R$ is a positive continuous electric potential and $f:\mathbb{R}^{3}\rightarrow \mathbb{R}$ is a continuous function with subcritical growth.
By using suitable variational methods, we  show the existence of families of nontrivial solutions concentrating
around local minima of the potential $V(x)$ as $\e\rightarrow 0$.
\end{abstract}

\maketitle

\section{introduction}
In this paper we are interested in the existence of nontrivial solutions for the following fractional nonlinear Schr\"odinger-Poisson type equation
\begin{equation}\label{P}
\varepsilon^{2s}(-\Delta)_{A/\varepsilon}^{s}u+V(x)u+\e^{-2t}(|x|^{2t-3}*|u|^{2})u=f(|u|^{2}) \quad \mbox{ in } \mathbb{R}^{3},
\end{equation}
where $\e>0$ is a parameter, $s\in (\frac{3}{4}, 1)$, $t\in (0,1)$, $A:\R^{3}\rightarrow \R^{3}\in C^{0, \alpha}$, with $\alpha\in (0, 1]$, is a magnetic potential, and $(-\Delta)^{s}_{A}$ is the so called fractional magnetic Laplacian which can be defined by setting
\begin{equation}\label{operator}
(-\Delta)^{s}_{A}u(x)
:=c_{3,s} \lim_{r\rightarrow 0} \int_{B_{r}^{c}(x)} \frac{u(x)-e^{\imath (x-y)\cdot A(\frac{x+y}{2})} u(y)}{|x-y|^{3+2s}} dy,
\quad
c_{3,s}:=\frac{4^{s}\Gamma\left(\frac{3+2s}{2}\right)}{\pi^{3/2}|\Gamma(-s)|},
\end{equation}
for any $u\in C^{\infty}_{c}(\R^{3}, \C)$; see \cite{DS, I10} for more details.
As showed in \cite{SV} (see also \cite{PSV}), when $s\rightarrow 1$, the previous operator reduces to the magnetic Laplacian $-\Delta_{A}:=\left(\frac{1}{\imath}\nabla-A\right)^{2}$ (see \cite{LaL, LL}) given by
$$
-\Delta_{A} u= -\Delta u -\frac{2}{\imath} A(x) \cdot \nabla u + |A(x)|^{2} u -\frac{1}{\imath} u \dive(A(x)),
$$ 
which appears in the study of the following Schr\"odinger equation with magnetic fields 
\begin{equation}\label{CMSE}
-\Delta_{A} u+V(x)u=f(x, |u|^{2})u \quad \mbox{ in } \R^{N}.
\end{equation}
Equation \eqref{CMSE} has been widely investigated by several authors in the last thirty years;  see for instance \cite{AFF, AS, Cingolani, CSS, EL, K}.

Along the paper, we assume that $V:\R^{3}\rightarrow \R$  is a continuous potential satisfying the following assumptions due to del Pino and Felmer \cite{DF}:
\begin{compactenum}[$(V_1)$]
\item $\inf_{x\in \R^{3}} V(x)=V_{0}>0$;
\item  there exists a bounded domain $\Lambda\subset \R^{3}$ such that
\begin{equation}
V_{0}<\min_{\partial \Lambda} V \quad \mbox{ and } M=\{x\in \Lambda: V(x)=V_{0}\}\neq \emptyset,
\end{equation}
\end{compactenum}
and  $f:\R\rightarrow \R$ is a continuous function verifying the following conditions:
\begin{compactenum}[$(f_1)$]
\item $f(t)=0$ for $t\leq 0$ and $\displaystyle{\lim_{t\rightarrow 0} \frac{f(t)}{t}=0}$;
\item there exist $q\in (4, 2^{*}_{s})$ such that 
$$
\lim_{t\rightarrow \infty} \frac{f(t)}{t^{\frac{q-2}{2}}}=0;
$$
\item there exists $\theta\in (4, \2)$ such that $0<\frac{\theta}{2} F(t)\leq t f(t)$ for any $t>0$, where $F(t)=\int_{0}^{t} f(\tau)d\tau$;
\item $t\mapsto \frac{f(t)}{t}$ is increasing for $t>0$.
\end{compactenum} 

Let us state our main theorem:
\begin{thm}\label{thm1}
Assume that $(V_1)$-$(V_2)$ and $(f_1)$-$(f_4)$ hold. Then
there exists $\e_{0}>0$ such that, for any $\e\in (0, \e_{0})$, problem \eqref{P} has a nontrivial solution. Moreover, if $u_{\e}$ denotes one of these solutions and $x_{\e}$ the global maximum point of $|u_{\e}|$, then we have 
$$
\lim_{\e\rightarrow 0} V(x_{\e})=V_{0}
$$	
and
$$
|u_{\e}(x)|\leq \frac{C\e^{3+2s}}{C\e^{3+2s}+|x-x_{\e}|^{3+2s}} \quad \forall x\in \R^{3}.
$$
\end{thm}

The above result is motivated by some works that appeared in the last years concerning fractional Schr\"odinger equations with magnetic fields of the type
\begin{equation}\label{MSE}
\e^{2s}(-\Delta)^{s}_{A}u+V(x)u=f(x, |u|^{2})u \quad \mbox{ in } \R^{N}.
\end{equation}
For instance, in the unperturbed case (that is $\e=1$), d'Avenia and Squassina \cite{DS} studied via a constrained minimization argument the existence of solutions \eqref{MSE}, $V$ is constant and $f$ is a subcritical or critical nonlinearity. 
Fiscella et al. \cite{FPV} obtained a multiplicity result for a fractional magnetic problem with homogeneous boundary conditions. 
When $\e>0$ is small, Zhang et al. \cite{ZSZ} considered a fractional magnetic Schr\"odinger equation involving critical frequency and critical growth. 
Recently, in \cite{AD}, the author and d'Avenia dealt with the existence and the multiplicity of solutions to \eqref{MSE} for small $\e>0$, when the potential $V$ satisfies the global condition due to Rabinowitz \cite{Rab} and $f$ has a subcritical growth.  

In absence of a magnetic field (that is $A=0$), the fractional magnetic Laplacian $(-\Delta)^{s}_{A}$ coincides with the fractional Laplacian $(-\Delta)^{s}$ and the equation \eqref{MSE} becomes
the well-known fractional Schr\"odinger equation (see \cite{Laskin})
\begin{equation}\label{FSE}
\e^{2s}(-\Delta)^{s}u+V(x)u=f(x,u) \mbox{ in } \R^{N},
\end{equation} 
 for which the existence and concentration phenomena of positive solutions have been considered by many mathematicians.
For example, D\'avila et al. \cite{DDPW} used a Lyapunov-Schmidt variational reduction to prove that \eqref{FSE} has a multi-peak solution when $V\in L^{\infty}(\R^{N})\cap C^{1, \alpha}(\R^{N})$ is a positive potential and $f$ is a subcritical nonlinearity; see also \cite{DDPDV} in which a concentration result has been established for a nonlocal problem with Dirichlet datum.
Fall  et al. \cite{FMV} showed that the concentration points of the solutions of \eqref{FSE} must be the critical points for $V$, as $\e$ tends to zero. 
Alves and Miyagaki \cite{AM} (see also \cite{A1, A3}) used the penalization method in \cite{DF} to study the existence and concentration of positive solutions of \eqref{FSE} requiring that $f$ satisfies $(f_1)$-$(f_4)$ and $V$ verifies $(V_1)$-$(V_2)$.

On the other hand, in these last years, several authors investigated fractional Schr\"odinger-Poisson systems of the type
\begin{equation}\label{FSPS} 
\left\{
\begin{array}{ll}
\e^{2s}(-\Delta)^{s} u+V(x) \phi u = g(x, u) &\mbox{ in } \R^{3} \\
\e^{2t}(-\Delta)^{t} \phi= u^{2} &\mbox{ in } \R^{3},
\end{array}
\right.
\end{equation}
which can be seen as the nonlocal counterpart of the well-known Schr\"odinger-Poisson systems appearing in quantum mechanics models \cite{BBL} and in semiconductor theory \cite{Mar}. Such systems have been introduced in \cite{BF} to 
describe systems of identical charged particles interacting each other in the case that effects of magnetic field could be ignored and its solution represents, in particular, a standing wave for such a system. 
We refer to \cite{AdAP, DW, He, HL, ruiz, WTXZ, ZZ} for some interesting existence and multiplicity results for classical perturbed and unperturbed Schr\"odinger-Poisson systems. 

Concerning \eqref{FSPS}, Giammetta \cite{G} considered the local and global well-posedness  of a one dimensional fractional Schr\"odinger-Poisson system in which $\e=1$ and the fractional diffusion appears only in the Poisson equation. 
Zhang et al. \cite{ZDS} dealt with the existence of positive solutions to \eqref{FSPS} with $\e=1$, $V(x)=\mu>0$ and $g$ is a general nonlinearity having subcritical or critical growth.  
Murcia and Siciliano \cite{MS} proved that, for suitably small $\e$, the number of positive solutions to a doubly singularly perturbed fractional Schr\"odinger-Poisson system is estimated below by the Ljusternick-Schnirelmann category of the set of minima of the potential. 
Liu and Zhang \cite{LZ} studied multiplicity and concentration of solutions to \eqref{FSPS} involving the critical exponent and under a global condition on the potential $V$.
Teng \cite{Teng}, inspired by \cite{HL}, used the penalization method due to Byeon and Wang \cite{BW} to analyze the existence and concentration of positive solutions to \eqref{FSPS} under the conditions $(V_1)$-$(V_2)$ and $g$ is a $C^{1}$ subcritical nonlinearity. 

Particularly motivated by \cite{AM, AD, Teng}, in this paper we investigate the existence and concentration behavior of nontrivial solutions to \eqref{P} with $A\neq 0$ and under the assumptions $(V_1)$-$(V_2)$ and $(f_1)$-$(f_4)$. 
We note that when $s=t=1$ in \eqref{P}, the multiplicity and concentration for a Schr\"odinger-Poisson type equation with magnetic field and under a local condition on $V$, has been established in \cite{ZS} by using some ideas developed in \cite{AFF}.
Anyway, their arguments work for $C^{1}$-Nehari manifolds and we can not apply them in our situation because we are assuming the only continuity of $f$.  

Since we don't have any information on the behavior of $V$ at infinity, we adapt the penalization argument developed by del Pino and Felmer in \cite{DF}, which consists in making an appropriate modification on $f$, solving a modified problem and then check that, for $\e$ small enough, the solutions of the modified problem are indeed solutions of the original one.
We point out that the penalization argument developed here is different from the one used in \cite{Teng}, in which the author does not assume the suplinear-4 growth on $f$ but has to require $f\in C^{1}$ to apply the techniques developed in \cite{BW, HL}.
The existence of nontrivial solutions for the modified problem is obtained by using the Mountain Pass Theorem \cite{AR} to the functional $J_{\e}$ associated to the modified problem.
We note that the main issue in the study of $J_{\e}$ concerns with the verification the Palais-Smale compactness condition. Indeed, the presence of the fractional magnetic Laplacian and the convolution term $(|x|^{2t-3}*|u|^{2})$, make our study more complicated and intriguing, and some suitable arguments will be needed to achieve our purpose; see Lemma \ref{PSc}.
The next step is to show that if $u_{\e}$ is a solution of the modified problem, then $u_{\e}$ is also a solution of the original one \eqref{P}.
In the case $A=0$ (see \cite{AM, Teng}), this is proved taking into account some fundamental estimates established in \cite{FQT} concerning the Bessel operator. In the case $A\neq 0$, we don't have similar informations for the following fractional equation
\begin{equation}\label{FMSEA}
(-\Delta)^{s}_{A}u+V_{0}u=h(|u|^{2})u \mbox{ in } \R^{3}.
\end{equation}
For the above reason, we use a new approximation argument which allows us to deduce that if $u_{\e}$ is  a solution to the modified problem, then $|u_{\e}|$ is a subsolution for an autonomous fractional Schr\"odinger equation without magnetic field, and then we apply a comparison argument to deduce informations on the behavior at infinity of $|u_{\e}|$; see Lemma \ref{moser}. 
We point out that, in the case $s=1$, a similar reasoning works (see \cite{CS, K}) in view of the following distributional Kato's inequality \cite{Kato} 
$$
-\Delta|u|\leq \Re(sign(u)(-\Delta_{A} u)).
$$ 
Recently, in \cite{HIL}, under the restriction $s\in (0, 1/2]$, a fractional distributional Kato's inequality has been established for  some fractional magnetic operators which also include $(-\Delta)^{1/2}_{A}$. 
We suspect that a fractional Kato's inequality is available in our setting for any fractional power $s$ (indeed it is easily seen that a pointwise Kato's inequality holds for smooth functions), but we are not able to prove it. 
Again, we can not repeat the iteration done in \cite{AFF} to obtain $L^{\infty}$-estimates on the modulus of solutions, due to the nonlocal character of $(-\Delta)^{s}_{A}$.
Anyway, in the present paper, we introduce some new arguments which we believe to be useful to be applied in other situations to obtain $L^{\infty}$-estimates for problems like \eqref{MSE}. 
Now we give a sketch of our idea.
Firstly, we show that  the (translated) sequence $|u_{n}|$ of solutions of the modified problem is bounded in $L^{\infty}(\R^{3}, \R)$ uniformly in $n\in \mathbb{N}$, by using an appropriate Moser iteration scheme \cite{Moser}. After that, we prove that $|u_{n}|$ verifies
\begin{equation*}
(-\Delta)^{s}|u_{n}|+V_{0}|u_{n}|\leq g(\e x, |u_{n}|^{2})|u_{n}| \mbox{ in } \R^{3},
\end{equation*}
by using $\displaystyle{\frac{u_{n}}{u_{\delta,n}}\varphi}$ as test function in the modified problem, 
where $u_{\delta,n}=\sqrt{|u_{n}|^{2}+\delta^{2}}$ and $\varphi$ is a real smooth nonnegative function with compact support in $\R^{3}$, and then we take the limit as $\delta\rightarrow 0$. In some sense, we are going to prove a fractional Kato's inequality for the modified problem.
At this point, by comparison, we can show that $|u_{n}(x)|\rightarrow 0$ as $|x|\rightarrow \infty$ uniformly with respect to $n\in \mathbb{N}$, 
taking into account the power type decay of solutions of autonomous fractional Schr\"odinger equations; see \cite{FQT}.
\noindent

The paper is organized as follows. In Section $2$ we give some results on fractional magnetic Sobolev spaces and we recall some useful lemmas. In Section $3$, we introduce the modified problem and we show that the corresponding functional satisfies the assumptions of the Mountain Pass Theorem. In the last section we give the proof of Theorem \ref{thm1}.

\section{Preliminaries and functional setting}
Let us consider the fractional Sobolev space 
$$
H^{s}(\R^{3}, \R)=\{u\in L^{2}(\R^{3}, \R): [u]<\infty\}
$$
where
$$
[u]^{2}=\iint_{\R^{6}} \frac{|u(x)-u(y)|^{2}}{|x-y|^{3+2s}} dxdy.
$$
It is well-known (see \cite{DPV, MBRS}) that the embedding $H^{s}(\R^{3}, \R)\subset L^{q}(\R^{3}, \R)$ is continuous for all $q\in [2, \2)$ and locally compact for all $q\in [1, \2)$.

Let $L^{2}(\R^{3}, \C)$ be the space of complex-valued functions such that $\int_{\R^{3}}|u|^{2}\, dx<\infty$ endowed with the inner product 
$\langle u, v\rangle_{L^{2}}=\Re\int_{\R^{3}} u\bar{v}\, dx$, where the bar denotes complex conjugation.

Let us denote by
$$
[u]^{2}_{A}:=\frac{c_{3,s}}{2}\iint_{\R^{6}} \frac{|u(x)-e^{\imath (x-y)\cdot A(\frac{x+y}{2})} u(y)|^{2}}{|x-y|^{3+2s}} \, dxdy,
$$
and we define
$$
D_A^s(\R^3,\C)
:=
\left\{
u\in L^{2_s^*}(\R^3,\C) : [u]^{2}_{A}<\infty
\right\}.
$$
In order to study our problem, we introduce the Hilbert space
$$
H^{s}_{\e}:=
\left\{
u\in D_{A_{\e}}^s(\R^3,\C): \int_{\R^{3}} V(\e x) |u|^{2}\, dx <\infty
\right\}
$$ 
endowed with the scalar product
\begin{align*}
\langle u , v \rangle_{\e}&=
\Re	\int_{\R^{3}} V(\e x) u \bar{v} dx\\
&
+ \frac{c_{3,s}}{2}\Re\iint_{\R^{6}} \frac{(u(x)-e^{\imath(x-y)\cdot A_{\e}(\frac{x+y}{2})} u(y))\overline{(v(x)-e^{\imath(x-y)\cdot A_{\e}(\frac{x+y}{2})}v(y))}}{|x-y|^{3+2s}} dx dy
\end{align*}
and we set
$$
\|u\|_{\e}:=\sqrt{\langle u , u \rangle_{\e}}.
$$
The space $\h$ satisfies the following fundamental properties; see \cite{AD, DS} for more details.
\begin{lem}\cite{AD, DS}
The space $\h$ is complete and $C_c^\infty(\R^3,\C)$ is dense in $\h$. 
\end{lem}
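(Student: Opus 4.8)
The plan is to establish completeness and density separately; the first part is essentially routine once the standard magnetic machinery is recalled, while the density of $C^\infty_c(\R^3,\C)$ hinges on a commutator-type estimate for the magnetic Gagliardo seminorm, which is the real obstacle. Throughout, write $D_w(x,y):=|x-y|^{-(3+2s)/2}\bigl(w(x)-e^{\imath(x-y)\cdot A_{\e}(\frac{x+y}{2})}w(y)\bigr)$, so that $[w]_{A_{\e}}^2=\tfrac{c_{3,s}}{2}\|D_w\|_{L^2(\R^6)}^2$, the map $w\mapsto D_w$ is linear, and $\|u\|_{\e}^2=\int_{\R^3}V(\e x)|u|^2\,dx+[u]_{A_{\e}}^2$. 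For completeness, take a Cauchy sequence $(u_n)$ in $\h$. Since $V(\e x)\ge V_0>0$ by $(V_1)$, $(u_n)$ is Cauchy in $L^2(\R^3,\C)$, so $u_n\to u$ in $L^2$ and, along a subsequence, a.e. By the pointwise diamagnetic inequality $\bigl||w(x)|-|w(y)|\bigr|\le|x-y|^{(3+2s)/2}|D_w(x,y)|$ (see \cite{DS}) together with the fractional Sobolev inequality, applied to $w=u_n-u_m$, one gets $\|u_n-u_m\|_{L^{\2}}=\bigl\|\,|u_n-u_m|\,\bigr\|_{L^{\2}}\le C[u_n-u_m]_{A_{\e}}\to0$, so $(u_n)$ is Cauchy in $L^{\2}$ with limit necessarily equal to $u$; hence $u\in L^{\2}$ and $\int V(\e x)|u_n-u|^2\,dx\to0$. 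Finally $(D_{u_n})$ is Cauchy in $L^2(\R^6)$, and along the a.e.-convergent subsequence its limit coincides a.e. with $D_u$, so $[u]_{A_{\e}}<\infty$ and $[u_n-u]_{A_{\e}}\to0$. Thus $u\in\h$ and $u_n\to u$ in $\h$.

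For density I would first truncate and then mollify. Fix $u\in\h$, pick $\eta\in C^\infty_c(\R^3,[0,1])$ with $\eta\equiv1$ on $B_1$ and $\supp\eta\subset B_2$, and set $\eta_R(x)=\eta(x/R)$. Using the elementary identity $\psi(x)w(x)-e^{\imath\theta}\psi(y)w(y)=\psi(x)\bigl(w(x)-e^{\imath\theta}w(y)\bigr)+e^{\imath\theta}\bigl(\psi(x)-\psi(y)\bigr)w(y)$ for real $\psi$, with $\theta=(x-y)\cdot A_{\e}(\frac{x+y}{2})$ and $\psi=\eta_R-1$, the integrand defining $[(\eta_R-1)u]_{A_{\e}}^2$ is dominated, uniformly for $R\ge1$, by
$$
2\,|D_u(x,y)|^2+2\,\min\bigl(1,L^2|x-y|^2\bigr)|u(y)|^2\,|x-y|^{-3-2s},
$$
where $L$ is a Lipschitz constant for $\eta$; this function lies in $L^1(\R^6)$ because $\int_{\R^3}\min(1,L^2|z|^2)|z|^{-3-2s}\,dz<\infty$ for $s\in(0,1)$, and it tends to $0$ pointwise, so dominated convergence gives $[(\eta_R-1)u]_{A_{\e}}\to0$; the $L^{\2}$ and weighted-$L^2$ parts of $\|\eta_R u-u\|_{\e}$ also vanish by dominated convergence. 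Hence it suffices to approximate a $u\in\h$ with $\supp u\subset B_\rho$.

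Set now $u_\delta=\zeta_\delta * u\in C^\infty_c(\R^3,\C)$ with $\zeta_\delta$ a standard mollifier; that $u_\delta\to u$ in $L^{\2}$ and in the weighted $L^2$ norm is classical, so the crux is $[u_\delta-u]_{A_{\e}}=\sqrt{c_{3,s}/2}\,\|D_{u_\delta}-D_u\|_{L^2(\R^6)}\to0$, and here the magnetic phase does not commute with convolution. Since $(x-z)-(y-z)=x-y$, one writes, with $\theta(a,b)=(a-b)\cdot A_{\e}(\frac{a+b}{2})$,
$$
u_\delta(x)-e^{\imath\theta(x,y)}u_\delta(y)=\int\zeta_\delta(z)\bigl(u(x-z)-e^{\imath\theta(x-z,y-z)}u(y-z)\bigr)\,dz+\int\zeta_\delta(z)\,u(y-z)\bigl(e^{\imath\theta(x-z,y-z)}-e^{\imath\theta(x,y)}\bigr)\,dz.
$$
Dividing by $|x-y|^{(3+2s)/2}$, the first term equals $\int\zeta_\delta(z)\,D_u(x-z,y-z)\,dz$, a mollification of $D_u\in L^2(\R^6)$ along the diagonal direction $\{(z,z)\}$, which converges to $D_u$ in $L^2(\R^6)$ by strong continuity of translations. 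For the second term, the Hölder continuity of $A_{\e}$ gives $|e^{\imath\theta(x-z,y-z)}-e^{\imath\theta(x,y)}|\le|x-y|\,|A_{\e}(\tfrac{x+y}{2}-z)-A_{\e}(\tfrac{x+y}{2})|\le C_{\e}\delta^\alpha|x-y|$ for $|z|\le\delta$, so the second term is bounded by $\min(2,C_{\e}\delta^\alpha|x-y|)\,(\zeta_\delta * |u|)(y)$; squaring, dividing by $|x-y|^{3+2s}$ and integrating first in $x$ yields $\int_{\R^3}\min(2,C_{\e}\delta^\alpha|w|)^2|w|^{-3-2s}\,dw=C(s)\,(C_{\e}\delta^\alpha)^{2s}$, finite precisely because $s<1$, whence the $L^2(\R^6)$ norm of the second term is $\le C(s,\e)\,\delta^{s\alpha}\|u\|_{L^2}\to0$. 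Therefore $[u_\delta-u]_{A_{\e}}\to0$, and combined with the truncation step this proves that $C^\infty_c(\R^3,\C)$ is dense in $\h$. The main difficulty is exactly this last estimate: one must absorb the gauge error created by replacing $\theta(x-z,y-z)$ by $\theta(x,y)$, which is controlled only through the modulus of continuity of $A$ and the integrability of $|x-y|^{-1-2s}$ near the diagonal — the latter being where the restriction $s<1$ is used.
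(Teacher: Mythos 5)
The paper does not actually prove this lemma --- it is quoted from \cite{AD, DS} --- and your argument is correct and follows essentially the same route as the proofs in those references: completeness via the pointwise diamagnetic inequality $||w(x)|-|w(y)||\leq |w(x)-e^{\imath\theta}w(y)|$ together with a.e.\ identification of the $L^{2}(\R^{6})$-limit of the magnetic difference quotients, and density via truncation followed by mollification, with the gauge error $e^{\imath\theta(x-z,y-z)}-e^{\imath\theta(x,y)}$ controlled through the H\"older modulus of $A$ and the integrability of $\min(1,|w|^{2})|w|^{-3-2s}$ over $\R^{3}$. I see no gaps; only cosmetic issues remain (e.g.\ the constant in $\min(1,L^{2}|x-y|^{2})$ should be $\min(4,L^{2}R^{-2}|x-y|^{2})$, which changes nothing).
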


\begin{thm}\label{Sembedding}\cite{DS}
	The space $H^{s}_{\e}$ is continuously embedded in $L^{r}(\R^{3}, \C)$ for $r\in [2, 2^{*}_{s}]$, and compactly embedded in $L_{\rm loc}^{r}(\R^{3}, \C)$ for $r\in [1, 2^{*}_{s})$.\\
\end{thm}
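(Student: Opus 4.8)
The plan is to reduce the statement to the classical non-magnetic fractional Sobolev theory by means of the pointwise diamagnetic inequality. For $u\in D_{A_{\e}}^{s}(\R^{3},\C)$ and a.e.\ $x,y\in\R^{3}$, putting $\theta=\theta(x,y)=(x-y)\cdot A_{\e}(\frac{x+y}{2})$ and using $\bigl||a|-|b|\bigr|\le|a-b|$ together with $|e^{\imath\theta}u(y)|=|u(y)|$, one has
\[
\bigl|\,|u(x)|-|u(y)|\,\bigr|=\bigl|\,|u(x)|-|e^{\imath\theta}u(y)|\,\bigr|\le\bigl|u(x)-e^{\imath\theta}u(y)\bigr|.
\]
Dividing by $|x-y|^{(3+2s)/2}$ and integrating over $\R^{6}$ shows $|u|\in H^{s}(\R^{3},\R)$ with $[|u|]^{2}\le\frac{2}{c_{3,s}}[u]_{A_{\e}}^{2}$, where $[u]_{A_{\e}}^{2}$ is the $A_{\e}$-Gagliardo seminorm, so that $\|u\|_{\e}^{2}=\int_{\R^{3}}V(\e x)|u|^{2}\,dx+[u]_{A_{\e}}^{2}$. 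Hence, for $u\in\h$, together with $(V_{1})$ this gives $\||u|\|_{H^{s}}^{2}=\||u|\|_{L^{2}}^{2}+[|u|]^{2}\le\bigl(V_{0}^{-1}+\tfrac{2}{c_{3,s}}\bigr)\|u\|_{\e}^{2}$. The continuous embedding then follows: for $r=2$ directly from $(V_{1})$; for $r\in(2,\2]$ by applying the fractional Sobolev embedding $H^{s}(\R^{3},\R)\hookrightarrow L^{r}(\R^{3},\R)$ recalled in this section --- the endpoint $r=\2$ being the fractional Sobolev inequality $\|v\|_{L^{\2}}\le S^{-1}[v]$, cf.\ \cite{DPV} --- to the real-valued function $|u|$, so that $\|u\|_{L^{r}}=\||u|\|_{L^{r}}\le C\||u|\|_{H^{s}}\le C\|u\|_{\e}$.

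For the compact local embedding I would work on balls. Fix $R>0$, write $B_{R}=B_{R}(0)$, and note $|x-y|\le 2R$ and $\frac{x+y}{2}\in B_{R}$ for $x,y\in B_{R}$. Since $A$ is continuous it is bounded on bounded sets, so there is $L=L(R,\e)>0$ with $|A_{\e}(z)|\le L$ for $z\in B_{R}$; consequently $|\theta(x,y)|\le L|x-y|$ and $|e^{\imath\theta(x,y)}-1|\le L|x-y|$ for $x,y\in B_{R}$. From $|u(x)-u(y)|\le|u(x)-e^{\imath\theta}u(y)|+L|x-y|\,|u(y)|$, the inequality $(a+b)^{2}\le 2a^{2}+2b^{2}$, and $\int_{B_{R}}|x-y|^{-(1+2s)}\,dx\le\int_{B_{2R}(0)}|z|^{-(1+2s)}\,dz=\frac{4\pi(2R)^{2-2s}}{2-2s}<\infty$ (where $1+2s<3$ is used), one obtains
\[
\iint_{B_{R}\times B_{R}}\frac{|u(x)-u(y)|^{2}}{|x-y|^{3+2s}}\,dx\,dy\le\frac{4}{c_{3,s}}[u]_{A_{\e}}^{2}+C_{R}\|u\|_{L^{2}(B_{R})}^{2}\le C_{R}'\|u\|_{\e}^{2},
\]
so the restriction operator $\h\to H^{s}(B_{R},\C)$ is bounded. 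If $u_{n}\rightharpoonup u$ in $\h$, then $\{u_{n}|_{B_{R}}\}$ is bounded in $H^{s}(B_{R},\C)$, and by the fractional Rellich--Kondrachov theorem (see \cite{DPV}) a subsequence converges strongly in $L^{r}(B_{R},\C)$ for every $r\in[1,\2)$; the limit coincides with $u$ since the continuous embedding $\h\hookrightarrow L^{2}(\R^{3},\C)$ forces $u_{n}\rightharpoonup u$ in $L^{2}(B_{R},\C)$. Letting $R=k\to\infty$ and extracting a diagonal subsequence yields $u_{n}\to u$ in $L^{r}_{\rm loc}(\R^{3},\C)$, which is the asserted compactness.

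The only genuinely magnetic ingredient, and the step I expect to be the main obstacle, is the comparison estimate on balls: it trades the ordinary Gagliardo seminorm for the magnetic one at the price of a lower-order $L^{2}$ term, and it hinges on the local boundedness of $A$ together with the local integrability of $|x-y|^{-(1+2s)}$ on $\R^{3}$ (i.e.\ $1+2s<3$). Once this is in place, the diamagnetic inequality disposes of the embeddings into $L^{2}$ and $L^{\2}$, and everything else is the standard fractional Sobolev and Rellich--Kondrachov machinery.
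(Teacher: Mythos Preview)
The paper does not prove this theorem; it is quoted from \cite{DS} without argument, so there is no in-paper proof to compare against. Your argument is correct and is the standard route: the pointwise diamagnetic inequality reduces the continuous embeddings to the classical fractional Sobolev embeddings applied to $|u|\in H^{s}(\R^{3},\R)$, while on each ball $B_{R}$ you trade the ordinary Gagliardo seminorm of $u$ for the magnetic one plus a lower-order $L^{2}$ error via $|e^{\imath\theta}-1|\le L|x-y|$ and the local integrability of $|z|^{-(1+2s)}$ (here $s<1$ is what makes this work), and then invoke the fractional Rellich--Kondrachov theorem on bounded domains. This is precisely the mechanism behind the results cited from \cite{DS}, and indeed your diamagnetic step is exactly the content of Lemma~\ref{DI} in the paper. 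One cosmetic remark: with the paper's normalizations (the factor $\tfrac{c_{3,s}}{2}$ is built into $[\,\cdot\,]_{A_{\e}}$ but not into $[\,\cdot\,]$), the inequality reads $[|u|]^{2}\le\tfrac{2}{c_{3,s}}[u]_{A_{\e}}^{2}$ as you wrote, whereas Lemma~\ref{DI} states $[|u|]\le[u]_{A}$; the discrepancy is only in the choice of constants and is immaterial for the embedding.
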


\begin{lem}\label{DI}\cite{DS}
If $u\in H^{s}_{A}(\R^{3}, \C)$ then $|u|\in H^{s}(\R^{3}, \R)$ and we have
$$
[|u|]\leq [u]_{A}.
$$
\end{lem}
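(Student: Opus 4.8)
The plan is to deduce the statement from the pointwise fractional diamagnetic inequality and then integrate; this is the natural fractional counterpart of the classical diamagnetic inequality $|\nabla|u||\le|(\nabla-\imath A)u|$.

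First I would record the elementary pointwise estimate: for all $a,b\in\C$ and all $\theta\in\R$,
$$\bigl| |a| - |b| \bigr| \le \bigl| a - e^{\imath \theta} b \bigr|.$$
Indeed, since $|e^{\imath\theta}b|=|b|$, the reverse triangle inequality gives $\bigl| |a|-|b| \bigr| = \bigl| |a| - |e^{\imath\theta} b| \bigr| \le |a - e^{\imath\theta} b|$.

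Next I would apply this, for a.e.\ $(x,y)\in\R^{6}$, with $a=u(x)$, $b=u(y)$ and $\theta=(x-y)\cdot A\bigl(\frac{x+y}{2}\bigr)$; the map $(x,y)\mapsto e^{\imath(x-y)\cdot A(\frac{x+y}{2})}$ is measurable because $A$ is continuous, so the resulting inequality
$$\bigl| |u(x)| - |u(y)| \bigr| \le \Bigl| u(x) - e^{\imath (x-y)\cdot A(\frac{x+y}{2})} u(y) \Bigr|$$
holds almost everywhere on $\R^{6}$. Squaring, dividing by $|x-y|^{3+2s}$ and integrating over $\R^{6}$, the definitions of $[\,\cdot\,]$ and of the magnetic Gagliardo seminorm $[\,\cdot\,]_A$ immediately yield $[|u|]^{2}\le[u]_A^{2}$, that is $[|u|]\le[u]_A$. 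Finally, $\bigl| |u(x)| \bigr| = |u(x)|$ for a.e.\ $x$, so $\| |u| \|_{L^2(\R^3)} = \|u\|_{L^2(\R^3)} < \infty$; combined with $[|u|]\le[u]_A<\infty$ this shows $|u|\in H^{s}(\R^{3},\R)$, completing the proof.

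There is no genuine obstacle in this argument. The only mild point of care is to justify that the pointwise inequality holds almost everywhere and may be integrated term by term, which is guaranteed by the continuity of $A$ and by the assumption $u\in H^{s}_A(\R^{3},\C)$ (so that the right-hand side is finite); everything else reduces to the one-line pointwise inequality displayed above.
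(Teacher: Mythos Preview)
The paper does not give its own proof of this lemma; it is simply quoted from d'Avenia--Squassina \cite{DS}. Your argument is correct and is exactly the standard proof (the one appearing in \cite{DS}): the pointwise reverse-triangle inequality $\bigl||a|-|b|\bigr|\le|a-e^{\imath\theta}b|$ with $\theta=(x-y)\cdot A\bigl(\tfrac{x+y}{2}\bigr)$, squared and integrated against the Gagliardo kernel.
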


\begin{lem}\label{aux}\cite{AD}
If $u\in H^{s}(\R^{3}, \R)$ and $u$ has compact support, then $w=e^{\imath A(0)\cdot x} u \in \h$.
\end{lem}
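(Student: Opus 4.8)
The plan is to verify directly that $w=e^{\imath A(0)\cdot x}u$ lies in each of the three ingredients of the definition of $\h$. Write $a:=A(0)\in\R^{3}$ and recall that $A_{\e}(x)=A(\e x)$, so $A_{\e}(0)=a$. Since $|w(x)|=|u(x)|$ pointwise, the fractional Sobolev inequality (applied to $u\in H^{s}(\R^{3},\R)$, for which $[u]<\infty$) gives $w\in L^{\2}(\R^{3},\C)$, while the compactness of $\supp u$ and the continuity of $V$ yield $\int_{\R^{3}}V(\e x)|w|^{2}\,dx=\int_{\R^{3}}V(\e x)|u|^{2}\,dx<\infty$. Hence the only substantial point is the finiteness of the magnetic Gagliardo seminorm $[w]_{A_{\e}}$.

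To deal with it I would first strip off the constant phase: factoring $e^{\imath a\cdot x}$ out of the numerator one checks that
\[
w(x)-e^{\imath(x-y)\cdot A_{\e}(\frac{x+y}{2})}w(y)=e^{\imath a\cdot x}\bigl(u(x)-e^{\imath b(x,y)}u(y)\bigr),\qquad b(x,y):=(x-y)\cdot\bigl(A_{\e}(\tfrac{x+y}{2})-a\bigr),
\]
so that $\bigl|w(x)-e^{\imath(x-y)\cdot A_{\e}(\frac{x+y}{2})}w(y)\bigr|=\bigl|u(x)-e^{\imath b(x,y)}u(y)\bigr|$. Using the elementary bound $|z_{1}-e^{\imath\vartheta}z_{2}|^{2}\le2|z_{1}-z_{2}|^{2}+2|1-e^{\imath\vartheta}|^{2}|z_{2}|^{2}$ (valid for $z_{1},z_{2}\in\C$, $\vartheta\in\R$), I get
\[
[w]_{A_{\e}}^{2}\le c_{3,s}\,[u]^{2}+c_{3,s}\iint_{\R^{6}}\frac{|1-e^{\imath b(x,y)}|^{2}\,|u(y)|^{2}}{|x-y|^{3+2s}}\,dx\,dy=:c_{3,s}[u]^{2}+c_{3,s}\,I,
\]
and $[u]^{2}<\infty$ because $u\in H^{s}(\R^{3},\R)$; so everything reduces to showing $I<\infty$.

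For $I$, since the integrand is nonnegative I would apply Fubini, let $\supp u\subset B_{R}$ (so only $y\in B_{R}$ matters), and split the inner $x$-integral according to $|x-y|\le1$ or $|x-y|>1$. On the far part one uses only $|1-e^{\imath b}|^{2}\le4$ together with $\int_{\{|z|>1\}}|z|^{-(3+2s)}\,dz<\infty$, obtaining a bound $\le C\,\|u\|_{L^{2}}^{2}$. On the near part, if $|x-y|\le1$ and $y\in B_{R}$ then $\tfrac{x+y}{2}\in B_{R+1}$, so by $A\in C^{0,\alpha}$,
\[
\bigl|A_{\e}(\tfrac{x+y}{2})-a\bigr|=\bigl|A(\e\tfrac{x+y}{2})-A(0)\bigr|\le[A]_{C^{0,\alpha}}\,\bigl(\e(R+1)\bigr)^{\alpha}=:C_{\e},
\]
whence $|1-e^{\imath b(x,y)}|^{2}\le|b(x,y)|^{2}\le C_{\e}^{2}|x-y|^{2}$ and the near part is bounded by $C_{\e}^{2}\bigl(\int_{\{|z|\le1\}}|z|^{-(1+2s)}\,dz\bigr)\|u\|_{L^{2}}^{2}$, which is finite exactly because $s<1$ forces $1+2s<3$. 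Thus $I<\infty$, hence $[w]_{A_{\e}}<\infty$ and $w\in\h$.

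The heart of the matter is the term $I$: as $A$ is only Hölder continuous (and possibly unbounded), the factor $A_{\e}(\tfrac{x+y}{2})-a$ is not bounded in $x$, so the Lipschitz-type estimate $|1-e^{\imath b}|\le|b|$ cannot be used globally. Two facts save the argument: the compact support of $u$ pins $y$ inside a fixed ball, and the diagonal/off-diagonal splitting confines the use of the linear estimate to a region where $\tfrac{x+y}{2}$ stays bounded, while off the diagonal the trivial bound $|1-e^{\imath b}|\le2$ and the integrability of $|x-y|^{-(3+2s)}$ at infinity do the job. (The hypothesis $s\in(\tfrac34,1)$ enters here only through $s<1$, ensuring $|z|^{-(1+2s)}$ is integrable near $0$ in $\R^{3}$.)
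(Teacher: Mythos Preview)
Your argument is correct. The paper does not supply its own proof of this lemma; it is quoted from \cite{AD}. Your approach---factor out the constant phase, bound $|u(x)-e^{\imath b(x,y)}u(y)|^{2}$ by $2|u(x)-u(y)|^{2}+2|1-e^{\imath b(x,y)}|^{2}|u(y)|^{2}$, and split the resulting integral $I$ into near-diagonal and far-diagonal parts---is exactly the standard one, and in fact the same estimates appear verbatim in the paper's proof of Lemma~\ref{AMlem1} (the quantities $X_{\e}$, $X_{\e}^{1}$, $X_{\e}^{2}$ there). The only cosmetic difference is that the paper (for the purpose of sending $\e\to0$) splits at $|x-y|=\e^{-\beta}$ and bounds $|A_{\e}(\tfrac{x+y}{2})-A(0)|$ via $|x+y|^{2\alpha}\le C(|x-y|^{2\alpha}+|y|^{2\alpha})$, whereas you (needing only finiteness for fixed $\e$) split at $|x-y|=1$ and exploit directly that $\tfrac{x+y}{2}$ stays in a fixed ball when $y\in\supp u$ and $|x-y|\le1$. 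Your version is the cleaner one for the statement at hand.
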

We also recall the following vanishing lemma \cite{FQT} which will be useful for our study:
\begin{lem}\label{Lions}\cite{FQT}
	Let $q\in [2, \2)$. If $(u_{n})$ is a bounded sequence in $H^{s}(\R^{3}, \R)$ and if 
	\begin{equation*}
	\lim_{n\rightarrow \infty} \sup_{y\in \R^{3}} \int_{B_{R}(y)} |u_{n}|^{q} dx=0
	\end{equation*}
	for some $R>0$, then $u_{n}\rightarrow 0$ in $L^{r}(\R^{3}, \R)$ for all $r\in (2, 2^{*}_{s})$.
\end{lem}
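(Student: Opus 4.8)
The plan is to establish the fractional version of Lions' vanishing lemma by the classical covering argument. First I would reduce to the case $q=2$. On any ball $B_R(y)$, Hölder's inequality gives $\int_{B_R(y)}|u_n|^{2}\le|B_R|^{1-2/q}\big(\int_{B_R(y)}|u_n|^{q}\big)^{2/q}$, so the hypothesis for the exponent $q$ automatically yields $\sup_{y\in\R^{3}}\int_{B_R(y)}|u_n|^{2}\to0$; since every $r$ under consideration satisfies $r>2$, it is therefore enough to treat $q=2$. So fix $r\in(2,\2)$.

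Next I would cover $\R^{3}$ by a countable family of balls $\{B_R(\xi_i)\}_{i\in\mathbb{N}}$ (centres on a sufficiently fine lattice) with the property that every point of $\R^{3}$ lies in at most $\kappa=\kappa(3)$ of them. On each ball, the fractional Sobolev embedding on bounded Lipschitz domains (see \cite{DPV}) gives $\|u_n\|_{L^{\2}(B_R(\xi_i))}\le C\|u_n\|_{W^{s,2}(B_R(\xi_i))}$ with $C=C(s,R)$ independent of $i$, where $\|u\|_{W^{s,2}(B_R(\xi_i))}^{2}=\|u\|_{L^{2}(B_R(\xi_i))}^{2}+\iint_{B_R(\xi_i)\times B_R(\xi_i)}\frac{|u(x)-u(y)|^{2}}{|x-y|^{3+2s}}\,dx\,dy$. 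Interpolating $L^{r}$ between $L^{2}$ and $L^{\2}$ on each ball and using this embedding, there is $\theta=\theta(r,s)\in(0,1)$ with $\|u_n\|_{L^{r}(B_R(\xi_i))}^{r}\le C\|u_n\|_{L^{2}(B_R(\xi_i))}^{(1-\theta)r}\|u_n\|_{W^{s,2}(B_R(\xi_i))}^{\theta r}$ for all $i$. Summing over $i$ and using the bounded overlap of the balls $B_R(\xi_i)$ and of the diagonal blocks $B_R(\xi_i)\times B_R(\xi_i)$ in $\R^{6}$, one obtains $\sum_i\|u_n\|_{W^{s,2}(B_R(\xi_i))}^{2}\le\kappa\|u_n\|_{H^{s}(\R^{3})}^{2}\le C$ and $\sum_i\|u_n\|_{L^{2}(B_R(\xi_i))}^{2}\le\kappa\|u_n\|_{L^{2}(\R^{3})}^{2}\le C$, both uniformly in $n$ by the boundedness of $(u_n)$ in $H^{s}(\R^{3},\R)$.

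Finally I would combine these estimates. Write $\mu_n:=\sup_{y\in\R^{3}}\int_{B_R(y)}|u_n|^{2}\to0$, so that $\|u_n\|_{L^{2}(B_R(\xi_i))}^{2}\le\mu_n$ for every $i$. Summing the localised interpolation inequality over $i$ and applying Hölder to the resulting series with the conjugate exponents $\tfrac{2}{\theta r}$ and $\tfrac{2}{2-\theta r}$ when $\theta r<2$ (when $\theta r\ge2$ one uses instead the superadditivity of $t\mapsto t^{\theta r/2}$), the $W^{s,2}$-part is controlled by $\big(\sum_i\|u_n\|_{W^{s,2}(B_R(\xi_i))}^{2}\big)^{\theta r/2}\le C$, while the $L^{2}$-part is a power of $\sum_i\|u_n\|_{L^{2}(B_R(\xi_i))}^{\beta}$ with exponent $\beta=\tfrac{2(1-\theta)r}{2-\theta r}$, and a short computation shows $\beta>2\Leftrightarrow r>2$. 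Hence $\sum_i\|u_n\|_{L^{2}(B_R(\xi_i))}^{\beta}\le\mu_n^{\beta/2-1}\sum_i\|u_n\|_{L^{2}(B_R(\xi_i))}^{2}\le C\mu_n^{\beta/2-1}\to0$. In all cases $\|u_n\|_{L^{r}(\R^{3})}^{r}\to0$, hence $u_n\to0$ in $L^{r}(\R^{3})$ for every $r\in(2,\2)$.

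The only delicate point is this last step. A plain $L^{2}$--$L^{\2}$ interpolation on the balls is critically balanced, so after the double summation it merely reproduces the uniform bound $\|u_n\|_{L^{r}(\R^{3})}\le C$ and all the smallness is lost; one must genuinely exploit the $H^{s}$-bound — via the embedding $W^{s,2}(B_R)\hookrightarrow L^{\2}(B_R)$, which leaves ``room to spare'' — in order to extract a strictly positive power of $\mu_n$, and then balance the exponents in Hölder's inequality so that this power survives both summations. Here one must also keep in mind that the nonlocal Gagliardo seminorm still enjoys bounded overlap once it is restricted to the diagonal blocks $B_R(\xi_i)\times B_R(\xi_i)$, which is what makes the summation of the $W^{s,2}$-terms legitimate.
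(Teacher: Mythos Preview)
The paper does not supply its own proof of this lemma; it is merely quoted from \cite{FQT}. So there is no in-paper argument to compare your attempt against.

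Your proof is correct and is precisely the classical Lions covering argument transplanted to the fractional setting. The reduction to $q=2$ via H\"older is clean; the localised embedding $W^{s,2}(B_R)\hookrightarrow L^{2^*_s}(B_R)$ with a constant independent of the centre, together with bounded overlap of the diagonal blocks $B_R(\xi_i)\times B_R(\xi_i)$ in $\R^{6}$, gives the summability of the $W^{s,2}$-pieces; and the H\"older splitting with exponents $\tfrac{2}{\theta r}$, $\tfrac{2}{2-\theta r}$ isolates a strictly positive power of $\mu_n$ exactly because $\beta=\tfrac{2(1-\theta)r}{2-\theta r}>2$ when $r>2$. In the complementary case $\theta r\ge 2$ your ``superadditivity'' remark is just the $\ell^{2}\hookrightarrow\ell^{\theta r}$ inclusion, i.e.\ $\sum_i b_i^{\theta r}\le\bigl(\sum_i b_i^{2}\bigr)^{\theta r/2}$, after which one simply pulls $\sup_i a_i^{(1-\theta)r}\le\mu_n^{(1-\theta)r/2}$ out of the sum; this is fine. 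The only cosmetic point is that the borderline $\theta r=2$ sits in both cases and can be handled by either (or by continuity in $r$).
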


\noindent
Now, let $s, t\in (0, 1)$ such that $4s+2t\geq 3$. Since $H^{s}(\R^{3}, \R)\subset L^{q}(\R^{3}, \R)$ for all $q\in [2, \2)$, we can deduce that 
\begin{equation}\label{ruiz}
H^{s}(\R^{3}, \R)\subset L^{\frac{12}{3+2t}}(\R^{3}, \R).
\end{equation}
For any $u\in \h$, we know that $|u|\in H^{s}(\R^{3}, \R)$ in view of Lemma \ref{DI}, and then we  consider the linear functional $\mathcal{L}_{|u|}: D^{t, 2}(\R^{3}, \R)\rightarrow \R$  given by
$$
\mathcal{L}_{|u|}(v)=\int_{\R^{3}} |u|^{2} v\, dx. 
$$
By using H\"older inequality and \eqref{ruiz} we can see that
\begin{equation}
|\mathcal{L}_{|u|}(v)|\leq \left(\int_{\R^{3}} |u|^{\frac{12}{3+2t}}dx\right)^{\frac{3+2t}{6}}  \left(\int_{\R^{3}} |v|^{2^{*}_{t}}dx\right)^{\frac{1}{2^{*}_{t}}}\leq C\|u\|^{2}_{D^{s,2}}\|v\|_{D^{t,2}},
\end{equation}
where
$$
\|v\|^{2}_{D^{t,2}}=\iint_{\R^{6}} \frac{|v(x)-v(y)|^{2}}{|x-y|^{3+2t}}dxdy,
$$
and this shows that $\mathcal{L}_{|u|}$ is well defined and continuous.
By using the Lax-Milgram Theorem, there exists a unique $\phi_{|u|}^{t}\in D^{t, 2}(\R^{3}, \R)$ such that 
\begin{equation}\label{FPE}
(-\Delta)^{t} \phi_{|u|}^{t}= |u|^{2} \mbox{ in } \R^{3}.
\end{equation}
Then we have the following $t$-Riesz formula
\begin{equation}\label{RF}
\phi_{|u|}^{t}(x)=c_{t}\int_{\R^{3}} \frac{|u(y)|^{2}}{|x-y|^{3-2t}} \, dy  \quad (x\in \R^{3}), \quad c_{t}=\pi^{-\frac{3}{2}}2^{-2t}\frac{\Gamma(3-2t)}{\Gamma(t)}.
\end{equation}
In the sequel, we will omit the constant $c_{t}$ in order to lighten the notation. Finally, we prove some properties on the convolution term.
\begin{lem}\label{poisson}
Let us assume that $4s+2t\geq 3$ and $u\in \h$.
Then we have:
\begin{enumerate}
\item 
$\phi_{|u|}^{t}: H^{s}(\R^{3},\R)\rightarrow D^{t,2}(\R^{3}, \R)$ is continuous and maps bounded sets into bounded sets,
\item if $u_{n}\rightharpoonup u$ in $\h$ then $\phi_{|u_{n}|}^{t}\rightharpoonup \phi_{|u|}^{t}$ in $D^{t,2}(\R^{3},\R)$,
\item $\phi^{t}_{|ru|}=r^{2}\phi^{t}_{|u|}$ for all $r\in \R$ and $\phi^{t}_{|u(\cdot+y)|}(x)=\phi^{t}_{|u|}(x+y)$,
\item $\phi_{|u|}^{t}\geq 0$ for all $u\in \h$, and we have
$$
\|\phi_{|u|}^{t}\|_{D^{t,2}}\leq C\|u\|_{L^{\frac{12}{3+2t}}(\R^{3})}^{2}\leq C\|u\|^{2}_{\e} \, \mbox{ and } \int_{\R^{3}} \phi_{|u|}^{t}|u|^{2}dx\leq C\|u\|^{4}_{L^{\frac{12}{3+2t}}(\R^{3})}\leq C\|u\|_{\e}^{4}.
$$
\end{enumerate} 
\end{lem}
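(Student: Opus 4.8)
The plan is to reduce every assertion to three ingredients: the variational identity $\|\phi_{|u|}^{t}\|_{D^{t,2}}^{2}=\int_{\R^{3}}|u|^{2}\phi_{|u|}^{t}\,dx$ obtained by testing \eqref{FPE} with $\phi_{|u|}^{t}$ itself; the explicit $t$-Riesz formula \eqref{RF}; and the embeddings $\h\hookrightarrow H^{s}(\R^{3},\R)\hookrightarrow L^{\frac{12}{3+2t}}(\R^{3},\R)$ from \eqref{ruiz} paired with the fractional Sobolev inequality $\|w\|_{L^{2^{*}_{t}}}\le C\|w\|_{D^{t,2}}$, so that the conjugate exponents in play are $\frac{6}{3+2t}$ and $2^{*}_{t}=\frac{6}{3-2t}$. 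With these in hand the four claims should come out essentially by bookkeeping with H\"older's inequality.

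Concretely, for $(4)$ I would first read off $\phi_{|u|}^{t}\ge 0$ directly from \eqref{RF}, since $|u|^{2}\ge 0$ and the kernel $|x-y|^{-(3-2t)}$ is strictly positive ($3-2t>0$). For the two norm estimates, H\"older gives $\int_{\R^{3}}|u|^{2}\phi_{|u|}^{t}\,dx\le\||u|^{2}\|_{L^{6/(3+2t)}}\|\phi_{|u|}^{t}\|_{L^{2^{*}_{t}}}\le C\|u\|_{L^{12/(3+2t)}}^{2}\|\phi_{|u|}^{t}\|_{D^{t,2}}$; combining with the variational identity yields $\|\phi_{|u|}^{t}\|_{D^{t,2}}\le C\|u\|_{L^{12/(3+2t)}}^{2}$, and then a further H\"older step gives $\int_{\R^{3}}\phi_{|u|}^{t}|u|^{2}\,dx\le C\|u\|_{L^{12/(3+2t)}}^{4}$; the majorizations by $\|u\|_{\e}^{2}$ and $\|u\|_{\e}^{4}$ follow from Theorem \ref{Sembedding}, which applies because $4s+2t\ge 3$ places $\tfrac{12}{3+2t}$ in $[2,2^{*}_{s}]$. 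This same inequality $\|\phi_{|u|}^{t}\|_{D^{t,2}}\le C\|u\|_{H^{s}}^{2}$ is exactly the bounded-sets-to-bounded-sets part of $(1)$. For the continuity in $(1)$ I would use linearity: $w:=\phi_{|u_{1}|}^{t}-\phi_{|u_{2}|}^{t}$ solves $(-\Delta)^{t}w=|u_{1}|^{2}-|u_{2}|^{2}$, so testing against $w$ and repeating the H\"older step gives $\|w\|_{D^{t,2}}\le C\,\||u_{1}|^{2}-|u_{2}|^{2}\|_{L^{6/(3+2t)}}$; the pointwise bound $\big||u_{1}|^{2}-|u_{2}|^{2}\big|\le(|u_{1}|+|u_{2}|)\,|u_{1}-u_{2}|$ and one more H\"older inequality (with $\tfrac{3+2t}{6}=\tfrac{3+2t}{12}+\tfrac{3+2t}{12}$) bound this by $C\big(\|u_{1}\|_{L^{12/(3+2t)}}+\|u_{2}\|_{L^{12/(3+2t)}}\big)\|u_{1}-u_{2}\|_{L^{12/(3+2t)}}$, which vanishes along converging sequences by \eqref{ruiz}. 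Part $(3)$ I would settle by a change of variables in \eqref{RF}: factoring $r^{2}$ out of $\int_{\R^{3}}|ru(y)|^{2}|x-y|^{-(3-2t)}\,dy$, and substituting $y\mapsto y-z$ in $\int_{\R^{3}}|u(y+z)|^{2}|x-y|^{-(3-2t)}\,dy$ to obtain $\phi_{|u|}^{t}(x+z)$ — alternatively, by uniqueness in the Lax--Milgram construction, since $(-\Delta)^{t}$ commutes with rescaling the datum and with translations.

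The step I expect to be the main obstacle is the weak continuity $(2)$. By the bound above, $u_{n}\rightharpoonup u$ in $\h$ forces $(\phi_{|u_{n}|}^{t})$ to be bounded in $D^{t,2}$, hence $\phi_{|u_{n}|}^{t}\rightharpoonup\psi$ in $D^{t,2}$ along a subsequence, and the real issue is to identify $\psi=\phi_{|u|}^{t}$. My plan is to test with $v\in C_{c}^{\infty}(\R^{3},\R)$ (dense in $D^{t,2}$) and use the compact embedding $\h\hookrightarrow L^{12/(3+2t)}_{\mathrm{loc}}(\R^{3},\C)$ from Theorem \ref{Sembedding} (valid since $\tfrac{12}{3+2t}<2^{*}_{s}$) to get $u_{n}\to u$ in $L^{12/(3+2t)}(\supp v)$, hence $|u_{n}|^{2}\to|u|^{2}$ in $L^{6/(3+2t)}(\supp v)$ by the same pointwise/H\"older argument, and thus $\int_{\R^{3}}|u_{n}|^{2}v\,dx\to\int_{\R^{3}}|u|^{2}v\,dx=\langle\phi_{|u|}^{t},v\rangle_{D^{t,2}}$; together with uniform boundedness of the pairings this identifies $\psi=\phi_{|u|}^{t}$ on all of $D^{t,2}$ by density. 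Finally, since the limit does not depend on the subsequence, a routine subsequence argument upgrades this to convergence of the full sequence. The only delicate bookkeeping point throughout is keeping the exponent $\tfrac{12}{3+2t}$ inside the admissible ranges for the continuous and compact embeddings, which is guaranteed by $4s+2t\ge 3$ (and is strict under the standing assumption $s>\tfrac34$, so the local compactness at that exponent is indeed available).
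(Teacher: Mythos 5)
Your proposal is correct and follows essentially the same route as the paper: the duality identity $\|\phi_{|u|}^{t}\|_{D^{t,2}}^{2}=\int_{\R^{3}}|u|^{2}\phi_{|u|}^{t}\,dx$ combined with H\"older at the exponents $\tfrac{6}{3+2t}$, $2^{*}_{t}$ and the embedding \eqref{ruiz} is exactly the paper's mechanism (there phrased as $\|\mathcal{L}_{|u|}\|_{\mathcal{L}(D^{t,2},\R)}=\|\phi_{|u|}^{t}\|_{D^{t,2}}$), and your treatment of $(2)$ via testing against $C^{\infty}_{c}$ functions, local compactness, and a subsequence/density argument matches the paper's. Your remark that $s>\tfrac34$ makes $\tfrac{12}{3+2t}<2^{*}_{s}$ strict, so local compactness at that exponent is available, is a correct and welcome precision.
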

\begin{proof}
$(1)$ Since $\phi_{|u|}^{t}\in D^{t,2}(\R^{3}, \R)$ satisfies \eqref{FPE}, that is
$$
\int_{\R^{3}}(-\Delta)^{\frac{t}{2}}\phi_{|u|}^{t} (-\Delta)^{\frac{t}{2}}v \,dx=\int_{\R^{3}} |u|^{2}v \,dx
$$
for all $v\in D^{t,2}(\R^{3}, \R)$, we can see that $\mathcal{L}_{|u|}$ is such that $\|\mathcal{L}_{|u|}\|_{\mathcal{L}(D^{t,2}, \R)}=\|\phi_{|u|}^{t}\|_{D^{t,2}}$ for all $u\in \h$. Hence, in order to prove the continuity of $\phi_{|u|}^{t}$, it is enough to show that the map $u\mapsto \mathcal{L}_{|u|}$ is continuous. Let $u_{n}\rightarrow u$ in $\h$. By using Lemma \ref{DI} and Theorem \ref{Sembedding} we deduce that $|u_{n}|\rightarrow |u|$ in $L^{\frac{12}{3+2t}}(\R^{3})$. Hence, for all $v\in D^{t,2}(\R^{3}, \R)$ we have
\begin{align*}
|\mathcal{L}_{|u_{n}|}(v)-\mathcal{L}_{|u|}(v)|&=\left|\int_{\R^{3}} (|u_{n}|^{2}-|u|^{2})v\, dx\right| \\
&\leq \left(\int_{\R^{3}} ||u_{n}|^{2}-|u|^{2}|^{\frac{6}{3+2t}} \, dx\right)^{\frac{3+2t}{6}} \|v\|_{L^{\frac{6}{3-2t}}(\R^{3})} \\
&\leq C\left[\left(\int_{\R^{3}} ||u_{n}|-|u||^{\frac{12}{3+2t}} \, dx\right)^{\frac{1}{2}} \left(\int_{\R^{3}} ||u_{n}|+|u||^{\frac{12}{3+2t}} \, dx\right)^{\frac{1}{2}}\right]^{\frac{3+2t}{6}} \|v\|_{D^{t,2}} \\
&\leq C  \||u_{n}|-|u|\|_{L^{\frac{12}{3+2t}}(\R^{3})}\|v\|_{D^{t,2}}
\end{align*}
which implies that $\|\phi_{|u_{n}|}^{t}-\phi_{|u|}^{t} \|_{D^{t,2}}=\|\mathcal{L}_{|u_{n}|}-\mathcal{L}_{|u|}\|_{\mathcal{L}(D^{t,2}, \R)}\rightarrow 0$ as $n\rightarrow \infty$.\\
$(2)$ If $u_{n}\rightharpoonup u$ in $\h$, then Lemma \ref{DI} and Theorem \ref{Sembedding} yield $|u_{n}|\rightarrow |u|$ in $L^{q}_{loc}(\R^{3}, \R)$ for all $q\in [1, \2)$. Hence, for all $v\in C^{\infty}_{c}(\R^{3}, \R)$ we get
\begin{align*}
\langle \phi_{|u_{n}|}^{t}-\phi_{|u|}^{t}, v\rangle&=\int_{\R^{3}} (|u_{n}|^{2}-|u|^{2})v\, dx \\
&\leq \left(\int_{supp(v)} ||u_{n}|-|u||^{2}\, dx\right)^{\frac{1}{2}} \left(\int_{\R^{3}} ||u_{n}|+|u||^{2}\, dx\right)^{\frac{1}{2}}  \|v\|_{L^{\infty}(\R^{3})}\\
&\leq C \||u_{n}|-|u|\|_{L^{2}(supp(v))} \|v\|_{L^{\infty}(\R^{3})}\rightarrow 0.
\end{align*}
$(3)$ and $(4)$ are easily obtained by applying Hardy-Littlewood-Sobolev inequality (see Theorem $4.3$ in \cite{LL}), H\"older inequality and Sobolev embedding. 
\end{proof}

\section{The Modified problem}
By using the change of variable $x\mapsto \e x$, we can see that the study of (\ref{P}) is equivalent to consider the following problem
\begin{equation}\label{Pe}
(-\Delta)_{A_{\e}}^{s} u + V_{\e}( x)u +(|x|^{2t-3}*|u|^{2})u=  f(|u|^{2})u  \mbox{ in } \R^{3},
\end{equation}
where $A_{\e}(x)=A(\e x)$ and $V_{\e}(x)=V(\e x)$.

As in \cite{AM, DF}, we  fix $k>\frac{\theta}{\theta-2}$ and $a>0$ such that $\frac{f(a)}{a}=\frac{V_{0}}{k}$, and we introduce the function
$$
\tilde{f}(t):=
\begin{cases}
f(t)& \text{ if $t \leq a$} \\
\frac{V_{0}}{k}    & \text{ if $t >a$}.
\end{cases}
$$ 
Then we define the penalized nonlinearity $g: \R^{3}\times \R\rightarrow \R$ by setting
$$
g(x, t)=\chi_{\Lambda}(x) f(t)+(1-\chi_{\Lambda}(x))\tilde{f}(t),
$$
where $\chi_{\Lambda}$ is the characteristic function on $\Lambda$, and  we set $G(x, t)=\int_{0}^{t} g(x, \tau)\, d\tau$.

From the assumptions $(f_1)$-$(f_4)$ it is standard to check that $g$ verifies the following properties:
\begin{compactenum}[($g_1$)]
\item $\displaystyle{\lim_{t\rightarrow 0} \frac{g(x, t)}{t}=0}$ uniformly in $x\in \R^{3}$;
\item $\lim_{t\rightarrow \infty} \frac{g(x,t)}{t^{\frac{q-2}{2}}}=0$ uniformly in $x\in \R^{3}$;
\item $(i)$ $0< \frac{\theta}{2} G(x, t)\leq g(x, t)t$ for any $x\in \Lambda$ and $t>0$, \\
$(ii)$ $0\leq  G(x, t)\leq g(x, t)t\leq \frac{V(x)}{k}t$ and $0\leq g(x,t)\leq \frac{V(x)}{k}$ for any $x\in \Lambda^{c}$ and $t>0$;
\item $t\mapsto \frac{g(x,t)}{t}$ is increasing for all $x\in \Lambda$ and $t>0$.
\end{compactenum}
Then, we consider the following modified problem 
\begin{equation}\label{MPe}
(-\Delta)^{s}_{A_{\e}} u + V(\e x)u +\phi_{|u|}^{t}u=  g_{\e}(x, |u|^{2})u \mbox{ in } \R^{3}, 
\end{equation}
where $g_{\e}(x, t)=g(\e x, t)$ and $\phi_{|u|}^{t}$ is given by \eqref{RF}.

Let us note that if $u$ is a solution of (\ref{MPe}) such that 
\begin{equation}\label{ue}
|u(x)|\leq a \mbox{ for all } x\in  \Lambda_{\e}^{c},
\end{equation}
where $\Lambda_{\e}:=\{x\in \R^{N}: \e x\in \Lambda\}$, then $u$ is also a solution of the original problem  (\ref{Pe}).

In order to find weak solutions to (\ref{Pe}), we look for critical points of the Euler-Lagrange functional $J_{\e}: \h\rightarrow \R$ defined as
$$
J_{\e}(u)=\frac{1}{2}\int_{\R^{3}}|(-\Delta)_{A_{\e}}^{\frac{s}{2}}u|^{2}+V(\e x) |u|^{2}\, dx+\frac{1}{4}\int_{\R^{3}}\phi_{|u|}^{t}|u|^{2}dx-\frac{1}{2}\int_{\R^{3}} G(\e x, |u|^{2})\, dx.
$$
We also consider the autonomous problem associated to \eqref{Pe}, that is
\begin{equation}\label{APe}
(-\Delta)^{s} u + V_{0}u +\phi^{t}_{|u|}u=  f(u^{2})u \mbox{ in } \R^{3}, 
\end{equation}
and we denote by $J_{0}: H^{s}(\R^{3}, \R)\rightarrow \R$ the corresponding energy functional
\begin{align*}
J_{0}(u)&=\frac{1}{2}\int_{\R^{3}}|(-\Delta)^{\frac{s}{2}}u|^{2}+V_{0} |u|^{2}\, dx+\frac{1}{4}\int_{\R^{3}}\phi^{t}_{|u|}u^{2}dx-\frac{1}{2}\int_{\R^{3}} F(u^{2})\, dx\\
&=\frac{1}{2}\|u\|^{2}_{0}+\frac{1}{4}\int_{\R^{3}}\phi^{t}_{|u|}u^{2}dx-\frac{1}{2}\int_{\R^{3}} F(u^{2})\, dx
\end{align*}
where we used the notation $\|\cdot\|_{0}$ to indicate the $H^{s}(\R^{3}, \R)$-norm (equivalent to the standard one).

In what follows, we show that $J_{\e}$ verifies the assumptions of the Mountain Pass Theorem \cite{AR}. 
\begin{lem}\label{MPG}
The functional $J_{\e}$ possesses a Mountain Pass geometry:
\begin{compactenum}[$(i)$]
\item $J_{\e}(0)=0$;
\item there exists $\alpha, \rho>0$ such that $J_{\e}(u)\geq \alpha$ for any $u\in \h$ such that $\|u\|_{\e}=\rho$;
\item there exists $e\in \h$ with $\|e\|_{\e}>\rho$ such that $J_{\e}(e)<0$.
\end{compactenum}
\end{lem}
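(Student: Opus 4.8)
The plan is to verify the three conditions of the Mountain Pass geometry directly from the definition of $J_{\e}$ and the structural properties of $g$ and $\phi_{|u|}^{t}$ collected above.

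For $(i)$, the statement $J_{\e}(0)=0$ is immediate: every term in $J_{\e}$ vanishes at $u=0$ since $\phi_{|0|}^{t}=0$ by Lemma \ref{poisson}$(3)$ and $G(\e x, 0)=0$.

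For $(ii)$, I would first use $(g_1)$ and $(g_2)$ to obtain, for any $\xi>0$, a constant $C_{\xi}>0$ with $G(\e x, t^{2})\leq \xi t^{2}+C_{\xi}|t|^{q}$ for all $t\geq 0$, uniformly in $x$. Hence
\[
\int_{\R^{3}} G(\e x, |u|^{2})\,dx \leq \xi \|u\|_{L^{2}}^{2} + C_{\xi}\|u\|_{L^{q}}^{q}.
\]
Since $q\in(4,2^{*}_{s})\subset(2,2^{*}_{s})$, Theorem \ref{Sembedding} gives $\|u\|_{L^{2}}^{2}\leq C\|u\|_{\e}^{2}$ and $\|u\|_{L^{q}}^{q}\leq C\|u\|_{\e}^{q}$. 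Dropping the nonnegative Poisson term $\frac14\int_{\R^{3}}\phi_{|u|}^{t}|u|^{2}\,dx\geq 0$ (Lemma \ref{poisson}$(4)$) and choosing $\xi$ small enough relative to $\min\{1,V_0\}$ so that $\frac12\|u\|_{\e}^{2}-\xi C\|u\|_{L^2}^2\geq \frac14\|u\|_{\e}^{2}$, we get
\[
J_{\e}(u)\geq \tfrac14\|u\|_{\e}^{2}-C_{\xi}C\|u\|_{\e}^{q}=\|u\|_{\e}^{2}\left(\tfrac14-C_{\xi}C\|u\|_{\e}^{q-2}\right).
\]
Since $q-2>0$, for $\|u\|_{\e}=\rho$ with $\rho>0$ small the bracket is bounded below by $\frac18$, so $J_{\e}(u)\geq \alpha:=\frac18\rho^{2}>0$.

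For $(iii)$, I would fix any $u_0\in C_c^\infty(\R^3,\C)\setminus\{0\}$ supported in $\Lambda_\e$ (so that $g_\e(x,\cdot)=f(\cdot)$ on the support, by definition of $g$), and study $J_{\e}(\tau u_0)$ as $\tau\to+\infty$. Using Lemma \ref{poisson}$(3)$, $\int_{\R^3}\phi^t_{|\tau u_0|}|\tau u_0|^2\,dx=\tau^4\int_{\R^3}\phi^t_{|u_0|}|u_0|^2\,dx$, which grows like $\tau^4$. The key is the superquadratic growth of $G$: condition $(g_3)(i)$ (equivalently $(f_3)$) gives, by integrating the differential inequality $\frac{\theta}{2}G(x,t)\leq g(x,t)t$, a bound $G(\e x,\tau^2|u_0|^2)\geq c_1\tau^{\theta}|u_0|^{\theta}-c_2$ on $\{u_0\neq0\}$ for constants $c_1>0$, $c_2\geq0$, hence $\int_{\R^3}G(\e x,|\tau u_0|^2)\,dx\geq C_1\tau^{\theta}-C_2$ with $C_1>0$. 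Since $\theta>4$, the term $-\frac12\int_{\R^3}G\,dx$ dominates both the $\tau^2$ quadratic part and the $\tau^4$ Poisson part, so $J_{\e}(\tau u_0)\to-\infty$; choosing $e=\tau u_0$ with $\tau$ large enough that $J_\e(e)<0$ and $\|e\|_\e>\rho$ finishes the proof.

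The only genuinely delicate point is the presence of the positive quartic Poisson term $\frac14\int\phi^t_{|u|}|u|^2$, which for the classical ($\theta=4$) Berestycki–Lions-type nonlinearity would compete at the same order with $-\frac12\int G$; here the hypothesis $\theta\in(4,2^{*}_{s})$ in $(f_3)$ is exactly what makes the nonlinear term win, so I would make sure to record explicitly where $\theta>4$ is used. Everything else is routine once the uniform-in-$x$ estimates on $G$ from $(g_1)$–$(g_3)$ and the embedding in Theorem \ref{Sembedding} are in hand.
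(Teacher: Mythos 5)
Your proposal is correct and follows essentially the same route as the paper: for $(ii)$ the growth conditions $(g_1)$--$(g_2)$ plus Theorem \ref{Sembedding} and the nonnegativity of the Poisson term, and for $(iii)$ a function supported in $\Lambda_{\e}$ together with the quartic scaling of $\int\phi^{t}_{|u|}|u|^{2}$ and the Ambrosetti--Rabinowitz condition with $\theta>4$. Your explicit remark that $\theta>4$ is exactly what lets the nonlinearity beat the quartic Poisson term is precisely the point the paper's proof relies on.
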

\begin{proof}
The condition $(i)$ is obvious. By using $(g_1)$ and $(g_2)$, and Theorem \ref{Sembedding} we can see that for any $\delta>0$ there exists $C_{\delta}>0$ such that
$$
J_{\e}(u)\geq \frac{1}{2}\|u\|^{2}_{\e}-\delta C\|u\|^{4}_{\e}-C_{\delta} \|u\|^{q}_{\e}.
$$
Choosing $\delta>0$ sufficiently small, we can see that $(ii)$ holds.
Regarding $(iii)$, we can note that in view of $(g_3)$, we have for any $u\in \h\setminus\{0\}$ with $supp(u)\subset \Lambda_{\e}$ and $T>1$
\begin{align*}
J_{\e}(Tu)&\leq \frac{T^{2}}{2} \|u\|^{2}_{\e}+\frac{T^{4}}{4}\int_{\R^{3}}\phi_{|u|}^{t}|u|^{2}dx-\frac{1}{2}\int_{\Lambda_{\e}} G(\e x, T^{2}|u|^{2})\, dx \\
&\leq \frac{T^{4}}{2} \left(\|u\|^{2}_{\e}+\int_{\R^{3}}\phi_{|u|}^{t}|u|^{2}dx\right)-CT^{\theta} \int_{\Lambda_{\e}} |u|^{\theta}\, dx+C
\end{align*}
which implies that $J_{\e}(Tu)\rightarrow -\infty$ as $T\rightarrow \infty$ being $\theta>4$.
\end{proof}

\begin{lem}\label{PSc}
Let $c\in \R$. Then $J_{\e}$ satisfies the Palais-Smale condition at the level $c$.
\end{lem}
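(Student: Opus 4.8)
The plan is to run the del Pino--Felmer compactness scheme, paying attention to the two nonlocal ingredients, the magnetic Gagliardo form and the Hartree term $\phi^{t}_{|u|}$. Let $(u_{n})\subset\h$ be a Palais--Smale sequence at level $c$, i.e. $J_{\e}(u_{n})\to c$ and $J_{\e}'(u_{n})\to 0$ in $(\h)^{*}$. \emph{Boundedness.} First I would estimate $J_{\e}(u_{n})-\frac{1}{\theta}J_{\e}'(u_{n})[u_{n}]$. By Lemma \ref{poisson}$(3)$--$(4)$ the Hartree term is $4$-homogeneous and nonnegative, hence it enters this difference with the nonnegative coefficient $\frac14-\frac1\theta>0$ (recall $\theta>4$) and can be discarded. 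Splitting the terms involving $G(\e x,|u_{n}|^{2})$ and $g_{\e}(x,|u_{n}|^{2})|u_{n}|^{2}$ over $\Lambda_{\e}$ and $\Lambda_{\e}^{c}$: on $\Lambda_{\e}$ property $(g_{3})(i)$ gives a nonnegative contribution, while on $\Lambda_{\e}^{c}$ property $(g_{3})(ii)$ bounds the bad terms by $\frac{1}{2k}\int_{\R^{3}}V(\e x)|u_{n}|^{2}\,dx\le\frac{1}{2k}\|u_{n}\|_{\e}^{2}$. This yields
$$
\Big(\frac12-\frac1\theta-\frac{1}{2k}\Big)\|u_{n}\|_{\e}^{2}\le |c|+1+o(1)\,\|u_{n}\|_{\e},
$$
and the coefficient is strictly positive thanks to the choice $k>\frac{\theta}{\theta-2}$. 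So $(u_{n})$ is bounded in $\h$ and, up to a subsequence, $u_{n}\rightharpoonup u$ in $\h$, $u_{n}\to u$ in $L^{r}_{\mathrm{loc}}(\R^{3},\C)$ for all $r\in[1,2^{*}_{s})$ by Theorem \ref{Sembedding}, and $\|u_{n}\|_{L^{2^{*}_{s}}}=\||u_{n}|\|_{L^{2^{*}_{s}}}\le C[u_{n}]_{A_{\e}}\le C\|u_{n}\|_{\e}$ is bounded by Lemma \ref{DI}.

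\emph{Tail estimate.} This is the crucial step: I claim that for every $\eta>0$ there is $R>0$ with $\limsup_{n\to\infty}\int_{\R^{3}\setminus B_{R}}V(\e x)|u_{n}|^{2}\,dx<\eta$. Fix $\eta_{R}\in C^{\infty}(\R^{3},\R)$ with $\eta_{R}\equiv0$ on $B_{R}$, $\eta_{R}\equiv1$ on $\R^{3}\setminus B_{2R}$, $0\le\eta_{R}\le1$, $|\nabla\eta_{R}|\le C/R$, and take $R$ so large that $\Lambda_{\e}\subset B_{R}$. Testing $J_{\e}'(u_{n})$ with $\eta_{R}u_{n}$ gives $J_{\e}'(u_{n})[\eta_{R}u_{n}]=o(1)$. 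Writing $w_{n}(x,y):=u_{n}(x)-e^{\imath(x-y)\cdot A_{\e}(\frac{x+y}{2})}u_{n}(y)$ and using $\eta_{R}(x)u_{n}(x)-e^{\imath(x-y)\cdot A_{\e}(\frac{x+y}{2})}\eta_{R}(y)u_{n}(y)=\eta_{R}(x)w_{n}(x,y)+e^{\imath(x-y)\cdot A_{\e}(\frac{x+y}{2})}u_{n}(y)(\eta_{R}(x)-\eta_{R}(y))$, the magnetic part of $\Re\langle u_{n},\eta_{R}u_{n}\rangle_{\e}$ equals
$$
\frac{c_{3,s}}{2}\iint_{\R^{6}}\eta_{R}(x)\frac{|w_{n}(x,y)|^{2}}{|x-y|^{3+2s}}\,dx\,dy+\frac{c_{3,s}}{2}\Re\iint_{\R^{6}}\frac{w_{n}(x,y)\,\overline{e^{\imath(x-y)\cdot A_{\e}(\frac{x+y}{2})}u_{n}(y)}\,(\eta_{R}(x)-\eta_{R}(y))}{|x-y|^{3+2s}}\,dx\,dy,
$$
the first summand being nonnegative. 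Calling $\omega_{n}(R)$ the second summand, Cauchy--Schwarz gives $|\omega_{n}(R)|\le C\|u_{n}\|_{\e}\big(\int_{\R^{3}}|u_{n}(y)|^{2}\int_{\R^{3}}\frac{|\eta_{R}(x)-\eta_{R}(y)|^{2}}{|x-y|^{3+2s}}\,dx\,dy\big)^{1/2}$; since $|\eta_{R}(x)-\eta_{R}(y)|\le\min\{2,CR^{-1}|x-y|\}$, the inner integral is $\le CR^{-2s}$ uniformly in $y$, so $\sup_{n}|\omega_{n}(R)|\le CR^{-s}\to0$ as $R\to\infty$. As $\supp\eta_{R}\subset\Lambda_{\e}^{c}$, property $(g_{3})(ii)$ yields $\int g_{\e}(x,|u_{n}|^{2})|u_{n}|^{2}\eta_{R}\le\frac1k\int V(\e x)|u_{n}|^{2}\eta_{R}$, and $\int\phi^{t}_{|u_{n}|}|u_{n}|^{2}\eta_{R}\ge0$; plugging everything into $J_{\e}'(u_{n})[\eta_{R}u_{n}]=o(1)$ gives $(1-\frac1k)\int_{\R^{3}}\eta_{R}V(\e x)|u_{n}|^{2}\,dx\le|\omega_{n}(R)|+o(1)$, and the claim follows since $k>1$.

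\emph{Strong convergence.} Expanding $J_{\e}'(u_{n})[u_{n}-u]$ and using $J_{\e}'(u_{n})\to0$, the boundedness of $\|u_{n}-u\|_{\e}$ and $\langle u,u_{n}-u\rangle_{\e}\to0$, one obtains
$$
\|u_{n}-u\|_{\e}^{2}=o(1)-\Re\int_{\R^{3}}\phi^{t}_{|u_{n}|}u_{n}\,\overline{(u_{n}-u)}\,dx+\Re\int_{\R^{3}}g_{\e}(x,|u_{n}|^{2})u_{n}\,\overline{(u_{n}-u)}\,dx.
$$
From the tail estimate, the bound on $\|u_{n}\|_{L^{2^{*}_{s}}}$ and interpolation, together with the local compactness of Step 1, one deduces $u_{n}\to u$ in $L^{p}(\R^{3},\C)$ for every $p\in(2,2^{*}_{s})$; in particular in $L^{12/(3+2t)}$, which lies in $(2,2^{*}_{s})$ since $4s+2t>3$ (because $s>\frac34$). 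Then H\"older's inequality and Lemma \ref{poisson}$(4)$ give $\big|\int\phi^{t}_{|u_{n}|}u_{n}\overline{(u_{n}-u)}\big|\le\|\phi^{t}_{|u_{n}|}\|_{L^{2^{*}_{t}}}\|u_{n}\|_{L^{12/(3+2t)}}\|u_{n}-u\|_{L^{12/(3+2t)}}\to0$. For the last term I split over $B_{R}$ and $\R^{3}\setminus B_{R}$ with $\Lambda_{\e}\subset B_{R}$: on $B_{R}$ the subcritical bounds $(g_{1})$--$(g_{2})$ and local compactness make the contribution vanish as $n\to\infty$; on $\R^{3}\setminus B_{R}$, $(g_{3})(ii)$ gives $|g_{\e}(x,|u_{n}|^{2})u_{n}|\le\frac1k V(\e x)|u_{n}|$, and Cauchy--Schwarz with the tail estimate bounds this by $\frac{C}{k}\eta^{1/2}$. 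Hence both remaining terms vanish and $\|u_{n}-u\|_{\e}\to0$, which is exactly the Palais--Smale condition at level $c$.

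\emph{Main obstacle.} The genuinely delicate point is the tail estimate: one must show that inserting the cut-off $\eta_{R}$ into the complex-valued, nonlocal magnetic form produces only a commutator-type remainder $\omega_{n}(R)$ that is negligible as $R\to\infty$ uniformly in $n$. Once this is settled, the interplay of local compactness, the subcritical growth of $g$ and the regularizing properties of $\phi^{t}_{|u|}$ from Lemma \ref{poisson} closes the argument; the Hartree term is harmless both because of its favourable sign and because the relevant exponent $12/(3+2t)$ stays strictly below $2^{*}_{s}$ thanks to $s>\frac34$.
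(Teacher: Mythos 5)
Your proposal is correct and follows the same overall del Pino--Felmer scheme as the paper (boundedness via $J_{\e}(u_{n})-\tfrac{1}{\theta}\langle J_{\e}'(u_{n}),u_{n}\rangle$ and $(g_3)$, a tail estimate obtained by testing with $\eta_{R}u_{n}$ and splitting off a commutator, then strong convergence), but it differs in two substantive ways. First, for the key limit $\iint |u_{n}(y)|^{2}|\eta_{R}(x)-\eta_{R}(y)|^{2}|x-y|^{-3-2s}\,dx\,dy\to 0$, the paper runs a lengthy three-region decomposition of $\R^{6}$ combined with H\"older and the local $L^{2}$-convergence of $|u_{n}|$; you instead use the pointwise bound $|\eta_{R}(x)-\eta_{R}(y)|\leq\min\{2,CR^{-1}|x-y|\}$ to get $\sup_{y}\int_{\R^{3}}|\eta_{R}(x)-\eta_{R}(y)|^{2}|x-y|^{-3-2s}\,dx\leq CR^{-2s}$, and then conclude from the uniform $L^{2}$-bound on $(u_{n})$ (available here since $V\geq V_{0}>0$). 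Your argument is shorter, gives a quantitative rate $O(R^{-s})$ uniform in $n$, and avoids the diagonal limit in $k$; the paper's argument is the one that survives when only the $L^{2^{*}_{s}}$-norm, not the $L^{2}$-norm, is under control. Second, you close the proof by expanding $J_{\e}'(u_{n})[u_{n}-u]$ directly, which lets you bypass the separate verification that $\int\phi^{t}_{|u_{n}|}|u_{n}|^{2}\to\int\phi^{t}_{|u|}|u|^{2}$ (the paper's identity \eqref{Poiss3} via the bilinear estimate on $\mathbb{D}$); your H\"older bound $\|\phi^{t}_{|u_{n}|}\|_{L^{2^{*}_{t}}}\|u_{n}\|_{L^{12/(3+2t)}}\|u_{n}-u\|_{L^{12/(3+2t)}}$ has matching exponents and the restriction $s>\tfrac34$ guarantees $\tfrac{12}{3+2t}<2^{*}_{s}$, exactly as you note. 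The only cosmetic omission is the remark that $(\eta_{R}u_{n})$ is bounded in $\h$, which justifies $J_{\e}'(u_{n})[\eta_{R}u_{n}]=o(1)$.
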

\begin{proof}
Let $(u_{n})\subset \h$ be a $(PS)$-sequence. Then $(u_{n})$ is bounded in $\h$. Indeed, using $(g_3)$ we have
\begin{align*}
c+o_{n}(1)\|u_{n}\|_{\e}&= J_{\e}(u_{n})-\frac{1}{\theta}\langle J'_{\e}(u_{n}), u_{n}\rangle \\
&= \left(\frac{1}{2}-\frac{1}{\theta}\right)\|u_{n}\|^{2}_{\e}+\left( \frac{1}{4}-\frac{1}{\theta}\right)\int_{\R^{3}} \phi_{|u_{n}|}^{t}|u_{n}|^{2}dx\\
&+\frac{1}{\theta}\int_{\R^{3}} g_{\e}(x, |u_{n}|^{2})|u_{n}|^{2}\, dx-\frac{1}{2} \int_{\R^{3}} G_{\e}(x, |u_{n}|^{2})\, dx\\
&\geq \left(\frac{1}{2}-\frac{1}{\theta}\right)\|u_{n}\|^{2}_{\e}+\frac{1}{\theta}\int_{\Lambda_{\e}} \left(g_{\e}(x, |u_{n}|^{2})|u_{n}|^{2}-\frac{\theta}{2}G(\e x, |u_{n}|^{2}) \right)dx  \\
&+\frac{1}{\theta}\int_{\Lambda^{c}_{\e}} g_{\e}(x, |u_{n}|^{2})|u_{n}|^{2}\, dx-\frac{1}{2} \int_{\R^{3}}\frac{V(\e x)}{k}|u_{n}|^{2} dx \\
&\geq \frac{1}{2}\left(\frac{\theta-2}{\theta}-\frac{1}{k}\right)\|u_{n}\|^{2}_{\e},
\end{align*}
and recalling that $k>\frac{\theta}{\theta-2}$ we get the thesis.
Now, we show that for any $\xi>0$ there exists $R=R_{\xi}>0$ such that $\Lambda_{\e}\subset B_{R}$ and
\begin{equation}\label{T}
\limsup_{n\rightarrow \infty}\int_{B_{R}^{c}} |(-\Delta)^{s}_{A_{\e}}u_{n}|^{2}+V_{\e}(x)|u_{n}|^{2}\, dx\leq \xi.
\end{equation}
Assume for the moment that that the above claim holds, and we show how this information can be used.
By using $u_{n}\rightharpoonup u$ in $\h$, Theorem \ref{Sembedding} and $(g_1)$-$(g_2)$, it is easy to see that
\begin{align}\label{Poissw0}
(u_{n}, \psi)_{\e}\rightarrow (u, \psi)_{\e} \mbox{ and } \Re\left(\int_{\R^{3}} g(\e x, |u_{n}|^{2}) u_{n}\bar{\psi} dx\right)\rightarrow  \Re\left(\int_{\R^{3}} g(\e x, |u|^{2}) u\bar{\psi} dx\right). 
\end{align}
Moreover, by using \eqref{T} and Theorem \ref{Sembedding} we can see that for all $\xi>0$ there exists $R=R_{\xi}>0$ such that for any $n$ large enough
\begin{align*}
\|u_{n}-u\|_{L^{q}(\R^{3})}&=\|u_{n}-u\|_{L^{q}(B_{R})}+\|u_{n}-u\|_{L^{q}(B_{R}^{c})}\\
&\leq \|u_{n}-u\|_{L^{q}(B_{R})}+(\|u_{n}\|_{L^{q}(B_{R}^{c})}+\|u\|_{L^{q}(B^{c}_{R})}) \\
&\leq \xi+2C\xi,
\end{align*}
where $q\in [2, \2)$, which gives  
\begin{equation}\label{CSSq}
u_{n}\rightarrow u \mbox{ in } L^{q}(\R^{3}, \C)\quad \forall q\in [2, \2).
\end{equation}
Since $||u_{n}|-|u||\leq |u_{n}-u|$ and $\frac{12}{3+2t}\in (2, 2^{*}_{s})$, we also have $|u_{n}|\rightarrow |u|$ in $L^{\frac{12}{3+2t}}(\R^{3},\R)$. \\
Then, recalling that $\phi_{|u|}: L^{\frac{12}{3+27}}(\R^{3}, \R)\rightarrow D^{t,2}(\R^{3}, \R)$ is continuous (see Lemma \ref{poisson}) we can deduce that
\begin{align}\begin{split}\label{Poissw1}
\phi_{|u_{n}|}^{t}\rightarrow \phi_{|u|}^{t} \mbox{ in } D^{t,2}(\R^{3}, \R).
\end{split}\end{align}
Putting together \eqref{CSSq}, \eqref{Poissw1}, H\"older inequality and Theorem \ref{Sembedding} we obtain
\begin{align}\label{Poissw2}
\Re\left(\int_{\R^{3}} (\phi_{|u_{n}|}^{t}u_{n}-\phi_{|u|}^{t}u)\bar{\psi}dx  \right)&=\Re\left(\int_{\R^{3}} \phi_{|u_{n}|}^{t}(u_{n}-u)\bar{\psi}+\int_{\R^{3}} (\phi_{|u_{n}|}^{t}-\phi_{|u|}^{t})u\bar{\psi}dx  \right) \nonumber\\
&\leq \|\phi_{|u_{n}|}^{t}\|_{L^{\frac{6}{3+2t}}(\R^{3})} \|u_{n}-u\|_{L^{\frac{12}{3+2t}}(\R^{3})}\|\psi\|_{L^{\frac{12}{3+2t}}(\R^{3})}\nonumber \\
&+\|\phi_{|u_{n}|}^{t}-\phi_{|u|}^{t}\|_{\frac{6}{3+2t}} \|u\|_{L^{\frac{12}{3+2t}}(\R^{3})}\|\psi\|_{\frac{12}{3+2t}} \nonumber\\
&\leq C \|u_{n}-u\|_{L^{\frac{12}{3+2t}}(\R^{3})}+C\|\phi_{|u_{n}|}^{t}-\phi_{|u|}^{t}\|_{D^{t,2}}\rightarrow 0.
\end{align}
Now, we show that
\begin{equation}\label{Poiss3}
\int_{\R^{3}} \phi_{|u_{n}|}^{t}|u_{n}|^{2}dx\rightarrow \int_{\R^{3}} \phi_{|u|}^{t}|u|^{2}dx.
\end{equation}
Let us start proving that
\begin{equation*}
|\mathbb{D}(u_{n})-\mathbb{D}(u)|\leq \sqrt{\mathbb{D}(||u_{n}|^{2}-|u|^{2}|^{1/2})} \sqrt{\mathbb{D}(||u_{n}|^{2}+|u|^{2}|^{1/2})},
\end{equation*}
where
$$
\mathbb{D}(u)=\iint_{\R^{6}} |x-y|^{-(3-2t)}|u(x)|^{2}|u(y)|^{2}dxdy.
$$
Indeed, taking into account $|x|^{-(3-2t)}$ is even and Theorem $9.8$ in \cite{LL} (see Remark after Theorem $9.8$ and recall that $-3<-(3-2t)<0$ ) we have
\begin{align*}
|\mathbb{D}(u_{n})-\mathbb{D}(u)|&=\left|\iint_{\R^{6}} |x-y|^{-(3-2t)}|u_{n}(x)|^{2}|u_{n}(y)|^{2}dxdy-\iint_{\R^{6}} |x-y|^{-(3-2t)}|u(x)|^{2}|u(y)|^{2}dxdy\right| \\
&=\Bigl|\iint_{\R^{6}} |x-y|^{-(3-2t)}|u_{n}(x)|^{2}|u_{n}(y)|^{2}dxdy+\iint_{\R^{6}} |x-y|^{-(3-2t)}|u_{n}(x)|^{2}|u(y)|^{2}dxdy \\
&-\iint_{\R^{6}} |x-y|^{-(3-2t)}|u(x)|^{2}|u_{n}(y)|^{2}dxdy-\iint_{\R^{6}} |x-y|^{-(3-2t)}|u(x)|^{2}|u(y)|^{2}dxdy\Bigr| \\
&=\left|\iint_{\R^{6}} |x-y|^{-(3-2t)}(|u_{n}(x)|^{2}-|u(x)|^{2}|)(|u_{n}(y)|^{2}+|u(y)|^{2}) dxdy  \right| \\
&\leq \iint_{\R^{6}} |x-y|^{-(3-2t)}||u_{n}(x)|^{2}-|u(x)|^{2}|| ||u_{n}(y)|^{2}+|u(y)|^{2}| dxdy  \\
&\leq C\sqrt{\mathbb{D}(||u_{n}|^{2}-|u|^{2}|^{1/2})} \sqrt{\mathbb{D}(||u_{n}|^{2}+|u|^{2}|^{1/2})}.
\end{align*}
Thus, by using Hardy-Littlewood-Sobolev inequality (see Theorem $4.3$ in \cite{LL}), H\"older inequality, the boundedness of $(|u_{n}|)$ in $H^{s}(\R^{3}, \R)$ and $|u_{n}|\rightarrow |u|$ in $L^{\frac{12}{3+2t}}(\R^{3},\R)$ we can see that
\begin{align*}
|\mathbb{D}(u_{n})-\mathbb{D}(u)|^{2}&\leq C\|||u_{n}|^{2}-|u|^{2}||^{1/2}\|^{4}_{L^{\frac{12}{3+2t}}(\R^{3})} \|||u_{n}|^{2}+|u|^{2}||^{1/2}\|^{4}_{L^{\frac{12}{3+2t}}(\R^{3})} \nonumber \\
&\leq C\||u_{n}|-|u|\|^{2}_{L^{\frac{12}{3+2t}}(\R^{3})}\rightarrow 0. 
\end{align*}

Therefore, by using $\langle J'_{\e}(u_{n}), \psi\rangle =o_{n}(1)$ for all $\psi\in C^{\infty}_{c}(\R^{3}, \C)$, and taking into account \eqref{Poissw0} and \eqref{Poissw2}, we can check that $J'_{\e}(u)=0$. In particular
\begin{equation}\label{Poiss0}
\|u\|^{2}_{\e}+\int_{\R^{3}} \phi_{|u|}^{t}|u|^{2}dx=\int_{\R^{3}} g_{\e}(x, |u|^{2})|u|^{2}\, dx.
\end{equation}
Now, we know that $\langle J'_{\e}(u_{n}), u_{n}\rangle =o_{n}(1)$ is equivalent to
\begin{equation}\label{Poiss1}
\|u_{n}\|^{2}_{\e}+\int_{\R^{3}} \phi_{|u_{n}|}^{t}|u_{n}|^{2}dx=\int_{\R^{3}} g_{\e}(x, |u_{n}|^{2})|u_{n}|^{2}\, dx+o_{n}(1).
\end{equation}
From the growth assumptions $(g_1)$-$(g_2)$ and using \eqref{T}, we can see that
\begin{equation}\label{Gng}
\int_{\R^{3}} g_{\e}(x, |u_{n}|^{2})|u_{n}|^{2}\, dx\rightarrow \int_{\R^{3}} g_{\e}(x, |u|^{2})|u|^{2}\, dx.
\end{equation}
Then, taking into account  \eqref{Poiss3}, \eqref{Poiss0},  \eqref{Poiss1} and  \eqref{Gng} we can infer that
$$
\lim_{n\rightarrow \infty}\|u_{n}\|^{2}_{\e}=\|u\|^{2}_{\e}.
$$
It remains to prove that \eqref{T} holds. Let $\eta_{R}\in C^{\infty}(\R^{3}, \R)$ be such that $0\leq \eta_{R}\leq 1$, $\eta_{R}=0$ in $B_{\frac{R}{2}}$, $\eta_{R}=1$ in $B_{R}^{c}$ and $|\nabla \eta_{R}|\leq \frac{C}{R}$ for some $C>0$ independent of $R$.
Since $(u_{n}\eta_{R})$ is bounded, we can see that $\langle J'_{\e}(u_{n}), u_{n}\eta_{R}\rangle =o_{n}(1)$, that is
\begin{align*}
&\Re\left(\iint_{\R^{6}} \frac{(u_{n}(x)-u_{n}(y)e^{\imath A_{\e}(\frac{x+y}{2})\cdot (x-y)})\overline{(u_{n}(x)\eta_{R}(x)-u_{n}(y)\eta_{R}(y)e^{\imath A_{\e}(\frac{x+y}{2})\cdot (x-y)})}}{|x-y|^{3+2s}}\, dx dy \right)\\
&+\int_{\R^{3}} \phi_{|u_{n}|}^{t} |u_{n}|^{2}\eta_{R} dx+\int_{\R^{3}} V_{\e}(x)\eta_{R} |u_{n}|^{2}\, dx=\int_{\R^{3}} g_{\e}(x, |u_{n}|^{2})|u_{n}|^{2}\eta_{R}\, dx+o_{n}(1).
\end{align*}
From
\begin{align*}
&\Re\left(\iint_{\R^{6}} \frac{(u_{n}(x)-u_{n}(y)e^{\imath A_{\e}(\frac{x+y}{2})\cdot (x-y)})\overline{(u_{n}(x)\eta_{R}(x)-u_{n}(y)\eta_{R}(y)e^{\imath A_{\e}(\frac{x+y}{2})\cdot (x-y)})}}{|x-y|^{3+2s}}\, dx dy \right)\\
&=\Re\left(\iint_{\R^{6}} \overline{u_{n}(y)}e^{-\imath A_{\e}(\frac{x+y}{2})\cdot (x-y)}\frac{(u_{n}(x)-u_{n}(y)e^{\imath A_{\e}(\frac{x+y}{2})\cdot (x-y)})(\eta_{R}(x)-\eta_{R}(y))}{|x-y|^{3+2s}}  \,dx dy\right)\\
&+\iint_{\R^{6}} \eta_{R}(x)\frac{|u_{n}(x)-u_{n}(y)e^{\imath A_{\e}(\frac{x+y}{2})\cdot (x-y)}|^{2}}{|x-y|^{3+2s}}\, dx dy,
\end{align*}
and using $(g_3)$-$(ii)$ and Lemma \ref{poisson}-$(4)$, it follows that
\begin{align}\label{PS1}
&\iint_{\R^{6}} \eta_{R}(x)\frac{|u_{n}(x)-u_{n}(y)e^{\imath A_{\e}(\frac{x+y}{2})\cdot (x-y)}|^{2}}{|x-y|^{3+2s}}\, dx dy+\int_{\R^{3}} V_{\e}(x)\eta_{R} |u_{n}|^{2}\, dx\nonumber\\
&\leq -\Re\left(\iint_{\R^{6}} \overline{u_{n}(y)}e^{-\imath A_{\e}(\frac{x+y}{2})\cdot (x-y)}\frac{(u_{n}(x)-u_{n}(y)e^{\imath A_{\e}(\frac{x+y}{2})\cdot (x-y)})(\eta_{R}(x)-\eta_{R}(y))}{|x-y|^{3+2s}}  \,dx dy\right) \nonumber\\
&+\frac{1}{k}\int_{\R^{3}} V_{\e}(x) \eta_{R} |u_{n}|^{2}\, dx+o_{n}(1).
\end{align}
Now, by using H\"older inequality and the boundedness of $(u_{n})$ in $\h$ we get
\begin{align}\label{PS2}
&\left|\Re\left(\iint_{\R^{6}} \overline{u_{n}(y)}e^{-\imath A_{\e}(\frac{x+y}{2})\cdot (x-y)}\frac{(u_{n}(x)-u_{n}(y)e^{\imath A_{\e}(\frac{x+y}{2})\cdot (x-y)})(\eta_{R}(x)-\eta_{R}(y))}{|x-y|^{3+2s}}  \,dx dy\right)\right| \nonumber\\
&\leq \left(\iint_{\R^{6}} \frac{|u_{n}(x)-u_{n}(y)e^{\imath A_{\e}(\frac{x+y}{2})\cdot (x-y)}|^{2}}{|x-y|^{3+2s}}\,dxdy  \right)^{\frac{1}{2}} \left(\iint_{\R^{6}} |\overline{u_{n}(y)}|^{2}\frac{|\eta_{R}(x)-\eta_{R}(y)|^{2}}{|x-y|^{3+2s}} \, dxdy\right)^{\frac{1}{2}} \nonumber\\
&\leq C \left(\iint_{\R^{6}} |u_{n}(y)|^{2}\frac{|\eta_{R}(x)-\eta_{R}(y)|^{2}}{|x-y|^{3+2s}} \, dxdy\right)^{\frac{1}{2}}.
\end{align}
In what follows, we show that
\begin{equation}\label{PS3}
\limsup_{R\rightarrow \infty}\limsup_{n\rightarrow \infty} \iint_{\R^{6}} |u_{n}(y)|^{2}\frac{|\eta_{R}(x)-\eta_{R}(y)|^{2}}{|x-y|^{3+2s}} \, dxdy=0.
\end{equation}
Let us note that  
$$
\R^{6}=((\R^{3}\setminus B_{2R})\times (\R^{3}\setminus B_{2R})) \cup ((\R^{3}\setminus B_{2R})\times B_{2R})\cup (B_{2R}\times \R^{3})=: X^{1}_{R}\cup X^{2}_{R} \cup X^{3}_{R}.
$$
Therefore
\begin{align}\label{Pa1}
&\iint_{\R^{6}}\frac{|\eta_{R}(x)-\eta_{R}(y)|^{2}}{|x-y|^{3+2s}} |u_{n}(x)|^{2} dx dy =\iint_{X^{1}_{R}}\frac{|\eta_{R}(x)-\eta_{R}(y)|^{2}}{|x-y|^{3+2s}} |u_{n}(x)|^{2} dx dy \nonumber \\
&+\iint_{X^{2}_{R}}\frac{|\eta_{R}(x)-\eta_{R}(y)|^{2}}{|x-y|^{3+2s}} |u_{n}(x)|^{2} dx dy+
\iint_{X^{3}_{R}}\frac{|\eta_{R}(x)-\eta_{R}(y)|^{2}}{|x-y|^{3+2s}} |u_{n}(x)|^{2} dx dy.
\end{align}
Since $\eta_{R}=1$ in $\R^{3}\setminus B_{2R}$, we can see that
\begin{align}\label{Pa2}
\iint_{X^{1}_{R}}\frac{|u_{n}(x)|^{2}|\eta_{R}(x)-\eta_{R}(y)|^{2}}{|x-y|^{3+2s}} dx dy=0.
\end{align}
Now, fix $k>4$, and we observe that
\begin{equation*}
X^{2}_{R}=(\R^{3} \setminus B_{2R})\times B_{2R} \subset ((\R^{3}\setminus B_{kR})\times B_{2R})\cup ((B_{kR}\setminus B_{2R})\times B_{2R}) 
\end{equation*}
If $(x, y) \in (\R^{3}\setminus B_{kR})\times B_{2R}$, then
\begin{equation*}
|x-y|\geq |x|-|y|\geq |x|-2R>\frac{|x|}{2}. 
\end{equation*}
Therefore, using $0\leq \eta_{R}\leq 1$, $|\nabla \eta_{R}|\leq \frac{C}{R}$ and applying H\"older inequality we obtain
\begin{align}\label{Pa3}
&\iint_{X^{2}_{R}}\frac{|u_{n}(x)|^{2}|\eta_{R}(x)-\eta_{R}(y)|^{2}}{|x-y|^{3+2s}} dx dy \nonumber \\
&=\int_{\R^{3}\setminus B_{kR}} \int_{B_{2R}} \frac{|u_{n}(x)|^{2}|\eta_{R}(x)-\eta_{R}(y)|^{2}}{|x-y|^{3+2s}} dx dy + \int_{B_{kR}\setminus B_{2R}} \int_{B_{2R}} \frac{|u_{n}(x)|^{2}|\eta_{R}(x)-\eta_{R}(y)|^{2}}{|x-y|^{3+2s}} dx dy \nonumber \\
&\leq 2^{2+3+2s} \int_{\R^{3}\setminus B_{kR}} \int_{B_{2R}} \frac{|u_{n}(x)|^{2}}{|x|^{3+2s}}\, dxdy+ \frac{C}{R^{2}} \int_{B_{kR}\setminus B_{2R}} \int_{B_{2R}} \frac{|u_{n}(x)|^{2}}{|x-y|^{3+2(s-1)}}\, dxdy \nonumber \\
&\leq CR^{3} \int_{\R^{3}\setminus B_{kR}} \frac{|u_{n}(x)|^{2}}{|x|^{3+2s}}\, dx + \frac{C}{R^{2}} (kR)^{2(1-s)} \int_{B_{kR}\setminus B_{2R}} |u_{n}(x)|^{2} dx \nonumber \\
&\leq CR^{3} \left( \int_{\R^{3}\setminus B_{kR}} |u_{n}(x)|^{2^{*}_{s}} dx \right)^{\frac{2}{2^{*}_{s}}} \left(\int_{\R^{3}\setminus B_{kR}}\frac{1}{|x|^{\frac{3^{2}}{2s} +3}}\, dx \right)^{\frac{2s}{3}} + \frac{C k^{2(1-s)}}{R^{2s}} \int_{B_{kR}\setminus B_{2R}} |u_{n}(x)|^{2} dx \nonumber \\
&\leq \frac{C}{k^{3}} \left( \int_{\R^{3}\setminus B_{kR}} |u_{n}(x)|^{2^{*}_{s}} dx \right)^{\frac{2}{2^{*}_{s}}} + \frac{C k^{2(1-s)}}{R^{2s}} \int_{B_{kR}\setminus B_{2R}} |u_{n}(x)|^{2} dx \nonumber \\
&\leq \frac{C}{k^{3}}+ \frac{C k^{2(1-s)}}{R^{2s}} \int_{B_{kR}\setminus B_{2R}} |u_{n}(x)|^{2} dx.
\end{align}
Take $\e\in (0,1)$, and we obtain
\begin{align}\label{Ter1}
&\iint_{X^{3}_{R}} \frac{|u_{n}(x)|^{2} |\eta_{R}(x)- \eta_{R}(y)|^{2}}{|x-y|^{3+2s}}\, dxdy \nonumber\\
&\leq \int_{B_{2R}\setminus B_{\varepsilon R}} \int_{\R^{3}} \frac{|u_{n}(x)|^{2} |\eta_{R}(x)- \eta_{R}(y)|^{2}}{|x-y|^{3+2s}}\, dxdy + \int_{B_{\varepsilon R}} \int_{\R^{3}} \frac{|u_{n}(x)|^{2} |\eta_{R}(x)- \eta_{R}(y)|^{2}}{|x-y|^{3+2s}}\, dxdy. 
\end{align}
Since
\begin{align*}
\int_{B_{2R}\setminus B_{\varepsilon R}} \int_{\R^{3} \cap \{y: |x-y|<R\}} \frac{|u_{n}(x)|^{2} |\eta_{R}(x)- \eta_{R}(y)|^{2}}{|x-y|^{3+2s}}\, dxdy \leq \frac{C}{R^{2s}} \int_{B_{2R}\setminus B_{\varepsilon R}} |u_{n}(x)|^{2} dx
\end{align*}
and 
\begin{align*}
\int_{B_{2R}\setminus B_{\varepsilon R}} \int_{\R^{3} \cap \{y: |x-y|\geq R\}} \frac{|u_{n}(x)|^{2} |\eta_{R}(x)- \eta_{R}(y)|^{2}}{|x-y|^{3+2s}}\, dxdy \leq \frac{C}{R^{2s}} \int_{B_{2R}\setminus B_{\varepsilon R}} |u_{n}(x)|^{2} dx
\end{align*}
we can see that
\begin{align}\label{Ter2}
\int_{B_{2R}\setminus B_{\varepsilon R}} \int_{\R^{3}} \frac{|u_{n}(x)|^{2} |\eta_{R}(x)- \eta_{R}(y)|^{2}}{|x-y|^{3+2s}}\, dxdy \leq \frac{C}{R^{2s}} \int_{B_{2R}\setminus B_{\varepsilon R}} |u_{n}(x)|^{2} dx. 
\end{align}
On the other hand, from the definition of $\eta_{R}$, $\e\in (0,1)$, and $\eta_{R}\leq 1$ we obtain
\begin{align}\label{Ter3}
\int_{B_{\varepsilon R}} \int_{\R^{3}} \frac{|u_{n}(x)|^{2} |\eta_{R}(x)- \eta_{R}(y)|^{2}}{|x-y|^{3+2s}}\, dxdy &= \int_{B_{\varepsilon R}} \int_{\R^{3}\setminus B_{R}} \frac{|u_{n}(x)|^{2} |\eta_{R}(x)- \eta_{R}(y)|^{2}}{|x-y|^{3+2s}}\, dxdy\nonumber \\
&\leq 4 \int_{B_{\varepsilon R}} \int_{\R^{3}\setminus B_{R}} \frac{|u_{n}(x)|^{2}}{|x-y|^{3+2s}}\, dxdy\nonumber \\
&\leq C \int_{B_{\varepsilon R}} |u_{n}|^{2} dx \int_{(1-\e)R}^{\infty} \frac{1}{r^{1+2s}} dr\nonumber \\
&=\frac{C}{[(1-\e)R]^{2s}} \int_{B_{\varepsilon R}} |u_{n}|^{2} dx
\end{align}
where we use the fact that if $(x, y) \in B_{\varepsilon R}\times (\R^{3} \setminus B_{R})$, then $|x-y|>(1-\e)R$. \\
Then \eqref{Ter1}, \eqref{Ter2} and \eqref{Ter3} yield
\begin{align}\label{Pa4}
\iint_{X^{3}_{R}} &\frac{|u_{n}(x)|^{2} |\eta_{R}(x)- \eta_{R}(y)|^{2}}{|x-y|^{3+2s}}\, dxdy \nonumber \\
&\leq \frac{C}{R^{2s}} \int_{B_{2R}\setminus B_{\varepsilon R}} |u_{n}(x)|^{p} dx + \frac{C}{[(1-\e)R]^{2s}} \int_{B_{\varepsilon R}} |u_{n}(x)|^{2} dx. 
\end{align}
In view of \eqref{Pa1}, \eqref{Pa2}, \eqref{Pa3} and \eqref{Pa4} we can infer 
\begin{align}\label{Pa5}
\iint_{\R^{6}} &\frac{|u_{n}(x)|^{2} |\eta_{R}(x)- \eta_{R}(y)|^{2}}{|x-y|^{3+2s}}\, dxdy \nonumber \\
&\leq \frac{C}{k^{3}} + \frac{Ck^{2(1-s)}}{R^{2s}} \int_{B_{kR}\setminus B_{2R}} |u_{n}(x)|^{2} dx + \frac{C}{R^{2s}} \int_{B_{2R}\setminus B_{\varepsilon R}} |u_{n}(x)|^{2} dx + \frac{C}{[(1-\e)R]^{2s}}\int_{B_{\varepsilon R}} |u_{n}(x)|^{2} dx. 
\end{align}
Since $(|u_{n}|)$ is bounded in $H^{s}(\R^{3}, \R)$, by using Sobolev embedding $H^{s}(\R^{3}, \R)\subset L^{\2}(\R^{3}, \R)$ (see \cite{DPV}), we may assume that $|u_{n}|\rightarrow u$ in $L^{2}_{loc}(\R^{3}, \R)$ for some $u\in H^{s}(\R^{3}, \R)$. Letting the limit as $n\rightarrow \infty$ in \eqref{Pa5} we find
\begin{align*}
&\limsup_{n\rightarrow \infty} \iint_{\R^{6}} \frac{|u_{n}(x)|^{2} |\eta_{R}(x)- \eta_{R}(y)|^{2}}{|x-y|^{3+2s}}\, dxdy\\
&\leq \frac{C}{k^{3}} + \frac{Ck^{2(1-s)}}{R^{2s}} \int_{B_{kR}\setminus B_{2R}} |u(x)|^{2} dx + \frac{C}{R^{2s}} \int_{B_{2R}\setminus B_{\varepsilon R}} |u(x)|^{2} dx + \frac{C}{[(1-\e)R]^{2s}}\int_{B_{\varepsilon R}} |u(x)|^{2} dx \\
&\leq \frac{C}{k^{3}} + Ck^{2} \left( \int_{B_{kR}\setminus B_{2R}} |u(x)|^{2^{*}_{s}} dx\right)^{\frac{2}{2^{*}_{s}}} + C\left(\int_{B_{2R}\setminus B_{\varepsilon R}} |u(x)|^{2^{*}_{s}} dx\right)^{\frac{2}{2^{*}_{s}}} + C\left( \frac{\e}{1-\e}\right)^{2s} \left(\int_{B_{\varepsilon R}} |u(x)|^{2^{*}_{s}} dx\right)^{\frac{2}{2^{*}_{s}}}, 
\end{align*}
where in the last passage we used H\"older inequality. 
Since $u\in L^{2^{*}_{s}}(\R^{3}, \R)$, $k>4$ and $\e \in (0,1)$ we can see that
\begin{align*}
\limsup_{R\rightarrow \infty} \int_{B_{kR}\setminus B_{2R}} |u(x)|^{2^{*}_{s}} dx = \limsup_{R\rightarrow \infty} \int_{B_{2R}\setminus B_{\varepsilon R}} |u(x)|^{2^{*}_{s}} dx = 0.
\end{align*}
Thus, taking $\e= \frac{1}{k}$, we have
\begin{align*}
&\limsup_{R\rightarrow \infty} \limsup_{n\rightarrow \infty} \iint_{\R^{6}} \frac{|u_{n}(x)|^{2} |\eta_{R}(x)- \eta_{R}(y)|^{2}}{|x-y|^{3+2s}}\, dxdy\\
&\leq \lim_{k\rightarrow \infty} \limsup_{R\rightarrow \infty} \Bigl[\, \frac{C}{k^{3}} + Ck^{2} \left( \int_{B_{kR}\setminus B_{2R}} |u(x)|^{2^{*}_{s}} dx\right)^{\frac{2}{2^{*}_{s}}} + C\left(\int_{B_{2R}\setminus B_{\frac{1}{k} R}} |u(x)|^{2^{*}_{s}} dx\right)^{\frac{2}{2^{*}_{s}}} \\
&+ C\left(\frac{1}{k-1}\right)^{2s} \left(\int_{B_{\frac{1}{k} R}} |u(x)|^{2^{*}_{s}} dx\right)^{\frac{2}{2^{*}_{s}}}\, \Bigr]\\
&\leq \lim_{k\rightarrow \infty} \frac{C}{k^{3}} + C\left(\frac{1}{k-1}\right)^{2s} \left(\int_{\R^{3}} |u(x)|^{2^{*}_{s}} dx \right)^{\frac{2}{2^{*}_{s}}}= 0,
\end{align*}
which implies that \eqref{PS3} holds true.  
Putting together \eqref{PS1}, \eqref{PS2} and \eqref{PS3} we can deduce that
$$
\limsup_{R\rightarrow \infty}\limsup_{n\rightarrow \infty} \left(1-\frac{1}{k}\right) \int_{B_{R}^{c}} |(-\Delta)^{\frac{s}{2}}_{A_{\e}}u_{n}|^{2}+V_{\e}(x)|u_{n}|^{2}\, dx=0,
$$
and this completes the proof of \eqref{T}.

\end{proof}

\noindent
In view of Lemma \ref{MPG}, we can define the mountain pass level
$$
c_{\e}=\inf_{\gamma\in \Gamma_{\e}} \max_{t\in [0, 1]} J_{\e}(\gamma(t))
$$
where
$$
\Gamma_{\e}=\{\gamma\in C([0, 1], \h): \gamma(0)=0 \mbox{ and } J_{\e}(\gamma(1))<0\}.
$$
By applying Mountain Pass Theorem \cite{AR}, we can see that there exists $u_{\e}\in \h\setminus\{0\}$ such that $J_{\e}(u_{\e})=c_{\e}$ and $J'_{\e}(u_{\e})=0$. In similar fashion, one can prove that also $J_{0}$ has a mountain pass geometry,  and we denote by $c_{V_{0}}$ the mountain pass level associated to $J_{0}$.\\
Now, let us introduce the Nehari manifold associated to (\ref{Pe}), that is
\begin{equation*}
\mathcal{N}_{\e}:= \{u\in \h \setminus \{0\} : \langle J_{\e}'(u), u \rangle =0\},
\end{equation*}
and we denote by $\mathcal{N}_{0}$ the Nehari manifold associated to \eqref{APe}.\\
It is standard to verify (see \cite{W}) that $c_{\e}$ can be also characterized as follows:
$$
c_{\e}=\inf_{u\in \h\setminus\{0\}} \sup_{t\geq 0} J_{\e}(t u)=\inf_{u\in \N_{\e}} J_{\e}(u);
$$

\noindent
Next, we prove the existence of a ground state solution to \eqref{APe}.
\begin{lem}\label{FS}
Let $(u_{n})\subset \mathcal{N}_{0}$ be a sequence satisfying $J_{0}(u_{n})\rightarrow c_{V_{0}}$. Then, up to subsequences, the following alternatives holds:
\begin{compactenum}[(i)]
\item $(u_{n})$ strongly converges in $H^{s}(\R^{3}, \R)$, 
\item there exists a sequence $(\tilde{y}_{n})\subset \R^{3}$ such that,  up to a subsequence, $v_{n}(x)=u_{n}(x+\tilde{y}_{n})$ converges strongly in $H^{s}(\R^{3}, \R)$.
\end{compactenum}
In particular, there exists a minimizer $w\in H^{s}(\R^{3}, \R)$ for $J_{0}$ with $J_{0}(w)=c_{V_{0}}$. 
\end{lem}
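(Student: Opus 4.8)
\emph{The plan} is the by-now-standard concentration--compactness scheme for the autonomous problem, resting on the Nehari structure coming from $(f_1)$--$(f_4)$ together with Lemma \ref{Lions} and Lemma \ref{poisson}; recall that, exactly as for $c_{\e}$, one has $c_{V_{0}}=\inf_{\mathcal{N}_{0}}J_{0}=\inf_{u\neq 0}\sup_{\tau\geq 0}J_{0}(\tau u)$. First I would establish two a priori bounds. Since $u_{n}\in\mathcal{N}_{0}$, the identity $J_{0}(u_{n})-\frac{1}{\theta}\langle J_{0}'(u_{n}),u_{n}\rangle=c_{V_{0}}+o_{n}(1)$ together with $(f_3)$ and $\theta>4$ gives $(\frac12-\frac1\theta)\|u_{n}\|_{0}^{2}+(\frac14-\frac1\theta)\int_{\R^{3}}\phi^{t}_{|u_{n}|}|u_{n}|^{2}\,dx\leq c_{V_{0}}+o_{n}(1)$, hence $(u_{n})$ is bounded in $H^{s}(\R^{3},\R)$. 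On the other hand, from $\|u_{n}\|_{0}^{2}\leq\|u_{n}\|_{0}^{2}+\int_{\R^{3}}\phi^{t}_{|u_{n}|}|u_{n}|^{2}\,dx=\int_{\R^{3}}f(|u_{n}|^{2})|u_{n}|^{2}\,dx$, the growth estimate $f(r)r\leq\eta r+C_{\eta}r^{q/2}$ for $r\geq 0$ (valid for every $\eta>0$ by $(f_1)$--$(f_2)$), and the Sobolev embedding, I would deduce $\|u_{n}\|_{0}\geq\delta_{0}>0$.

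Next, I would rule out vanishing. If $\sup_{y\in\R^{3}}\int_{B_{1}(y)}|u_{n}|^{2}\,dx\to0$, then Lemma \ref{Lions} (with $q=2$) forces $u_{n}\to0$ in $L^{r}(\R^{3},\R)$ for all $r\in(2,2_{s}^{*})$; since $q\in(4,2_{s}^{*})$, the growth estimate above then yields $\int_{\R^{3}}f(|u_{n}|^{2})|u_{n}|^{2}\,dx\to0$ and hence $\|u_{n}\|_{0}\to0$, contradicting the lower bound. Thus there are $R,\beta>0$ and $(\tilde y_{n})\subset\R^{3}$ with $\liminf_{n}\int_{B_{R}(\tilde y_{n})}|u_{n}|^{2}\,dx\geq\beta$. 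Set $v_{n}(x):=u_{n}(x+\tilde y_{n})$; since \eqref{APe} is autonomous and $\phi^{t}_{|u(\cdot+y)|}(x)=\phi^{t}_{|u|}(x+y)$ by Lemma \ref{poisson}, we have $v_{n}\in\mathcal{N}_{0}$ and $J_{0}(v_{n})=J_{0}(u_{n})\to c_{V_{0}}$. Up to a subsequence, $v_{n}\rightharpoonup w$ in $H^{s}(\R^{3},\R)$, $v_{n}\to w$ in $L^{r}_{loc}$ and a.e.; the mass bound gives $w\neq0$.

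The compactness step is the heart of the matter. I would introduce $\Psi(u):=J_{0}(u)-\frac14\langle J_{0}'(u),u\rangle=\frac14\|u\|_{0}^{2}+\int_{\R^{3}}\big(\frac14 f(|u|^{2})|u|^{2}-\frac12 F(|u|^{2})\big)\,dx$, whose integrand is nonnegative by $(f_3)$; weak lower semicontinuity of the norm and Fatou then give $\Psi(w)\leq\liminf_{n}\Psi(v_{n})=\liminf_{n}J_{0}(v_{n})=c_{V_{0}}$. Using a Brezis--Lieb decomposition $v_{n}=w+z_{n}$ with $z_{n}\rightharpoonup0$ for the norm, for the Riesz term $\int\phi^{t}_{|\cdot|}|\cdot|^{2}$ and for the subcritical term $\int F(|\cdot|^{2})$, one obtains $J_{0}(v_{n})=J_{0}(w)+J_{0}(z_{n})+o_{n}(1)$ and $\langle J_{0}'(v_{n}),v_{n}\rangle=\langle J_{0}'(w),w\rangle+\langle J_{0}'(z_{n}),z_{n}\rangle+o_{n}(1)$. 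Since $\langle J_{0}'(v_{n}),v_{n}\rangle=0$, a short Nehari-projection argument using only $(f_4)$ (if $\langle J_{0}'(w),w\rangle>0$ then $(z_{n})$ stays bounded away from $0$, its projection $s_{n}z_{n}$ with $s_{n}\in(0,1)$ satisfies $c_{V_{0}}\leq J_{0}(s_{n}z_{n})=\Psi(s_{n}z_{n})\leq\Psi(z_{n})$, and inserting this into $c_{V_{0}}=J_{0}(w)+\lim_{n}J_{0}(z_{n})$ forces $\Psi(w)=0$, impossible for $w\neq0$) shows $\langle J_{0}'(w),w\rangle\leq0$. Hence there is $t_{w}\in(0,1]$ with $t_{w}w\in\mathcal{N}_{0}$, so $c_{V_{0}}\leq J_{0}(t_{w}w)=\Psi(t_{w}w)\leq\Psi(w)\leq c_{V_{0}}$, using that $\tau\mapsto\Psi(\tau w)$ is nondecreasing by $(f_4)$; strict monotonicity then gives $t_{w}=1$, so $w\in\mathcal{N}_{0}$ and $J_{0}(w)=c_{V_{0}}$, i.e. $w$ is the desired minimizer. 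Finally, $\Psi(w)=\lim_{n}\Psi(v_{n})$ together with the weak lower semicontinuity of each nonnegative summand of $\Psi$ forces $\|v_{n}\|_{0}\to\|w\|_{0}$, which with $v_{n}\rightharpoonup w$ in the Hilbert space $H^{s}(\R^{3},\R)$ yields $v_{n}\to w$ strongly --- alternative (ii). If instead $(\tilde y_{n})$ is bounded, one may assume $\tilde y_{n}\to\bar y$ and, by continuity of translations, $u_{n}\to w(\cdot-\bar y)$ strongly --- alternative (i).

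The main obstacle is precisely this last step, i.e. ruling out loss of mass at the weak limit. Two points need care: one must set up a Brezis--Lieb splitting for the nonlocal Choquard-type term $\int_{\R^{3}}\phi^{t}_{|u|}|u|^{2}\,dx$ (a Riesz-potential version of the Brezis--Lieb lemma, consistent with the bilinear estimates in Lemma \ref{poisson} and with the computation of $\mathbb{D}(\cdot)$ in Lemma \ref{PSc}); and one cannot invoke the usual Ekeland scheme on $\mathcal{N}_{0}$, because $f$ is merely continuous and so $\mathcal{N}_{0}$ need not be a $C^{1}$-manifold --- which is why the $\Psi$/Nehari-projection bookkeeping above is arranged to use only $(f_3)$--$(f_4)$ rather than the differentiability of $f$.
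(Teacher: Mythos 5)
Your argument is correct, but it follows a genuinely different route from the paper's. The paper first replaces the minimizing sequence on $\mathcal{N}_{0}$ by a $(PS)_{c_{V_{0}}}$ sequence (invoking the Mountain Pass Theorem without the $(PS)$ condition), observes that the weak limit $u$ of a bounded $(PS)$ sequence satisfies $J_{0}'(u)=0$ --- so that $u\in\mathcal{N}_{0}$ whenever $u\neq 0$ --- and then closes the argument with exactly your functional $\Psi=J_{0}-\frac{1}{4}\langle J_{0}'(\cdot),\cdot\rangle$ and Fatou's lemma; when $u=0$ it translates along the concentration points and repeats the argument. You instead never pass to a $(PS)$ sequence: you use only the Nehari constraint $\langle J_{0}'(v_{n}),v_{n}\rangle=0$, a Brezis--Lieb decomposition for the norm, for the Riesz term $\mathbb{D}(\cdot)$ and for the nonlinearity, and a fiber-projection argument on $\mathcal{N}_{0}$ to show that the (translated) weak limit lies in $\mathcal{N}_{0}$ and attains $c_{V_{0}}$. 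What each approach buys: the paper's is shorter, but the step ``we may suppose $(u_{n})$ is a $(PS)_{c_{V_{0}}}$ sequence'' is delicate for an arbitrary given minimizing sequence on $\mathcal{N}_{0}$ when $f$ is merely continuous (so $\mathcal{N}_{0}$ is not a $C^{1}$ manifold and Ekeland cannot be applied on it directly); your version works verbatim under the stated hypotheses and uses only $(f_{3})$--$(f_{4})$, at the cost of having to justify the nonlocal Brezis--Lieb splitting for $\int_{\R^{3}}\phi^{t}_{|u|}|u|^{2}dx$ (which does follow by expanding the quadrilinear form $\mathbb{D}$ as in Lemma \ref{PSc} together with the Hardy--Littlewood--Sobolev inequality, but should be written out). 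Both proofs extract strong convergence from the same mechanism --- the equality chain forces each weakly lower semicontinuous, nonnegative summand of $\Psi$ to converge, whence $\|v_{n}\|_{0}\rightarrow\|w\|_{0}$ --- a point you make explicit and the paper leaves implicit. Note finally that the paper's proof of this lemma also establishes positivity, boundedness, H\"older regularity and the power-type decay of the ground state, which are used later in Lemma \ref{AMlem1}; these go beyond the statement you were asked to prove, so their omission is not a gap.
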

\begin{proof}
Since $J_{0}$ has a Mountain Pass geometry, we can use a version of the Mountain Pass Theorem without $(PS)$ condition (see \cite{W}), and we may suppose that  $(u_{n})$ is a $(PS)_{c_{V_{0}}}$ sequence for $J_{0}$. Arguing as in Lemma \ref{PSc}, it is easy to check that $(u_{n})$ is bounded in $H^{s}(\R^{3}, \R)$ so we may assume that $u_{n}\rightharpoonup u$ in $H^{s}(\R^{3}, \R)$. The weak convergence is enough to deduce that $J'_{0}(u)=0$. 
Now, we assume that $u\neq 0$. Since $u\in \mathcal{N}_{0}$, we can use $(f_3)$ and Fatou's Lemma to see that
\begin{align*}
c_{V_{0}}&\leq J_{0}(u)-\frac{1}{4}\langle J'_{0}(u), u\rangle \\
&=\frac{1}{4}\|u\|_{\mu}^{2}+\frac{1}{2}\int_{\R^{3}}\frac{1}{2} f(u^{2})u-F(u^{2})\, dx \\
&\leq \liminf_{n\rightarrow \infty} \left[J_{0}(u_{n})-\frac{1}{4}\langle J'_{0}(u), u\rangle\right]=c_{V_{0}},
\end{align*}
which implies that $J_{0}(u)=c_{V_{0}}$. 

Let us consider the case $u=0$. Since $c_{V_{0}}>0$ and $J_{0}$ is continuous, we can see that $\|u_{n}\|_{0}\not\rightarrow 0$. Then, in view of Lemma \ref{Lions} and $(f_1)$-$(f_2)$, it is standard to prove that there are a sequence $(y_{n})\subset \R^{3}$ and constants $R, \beta>0$ such that 
$$
\liminf_{n\rightarrow \infty} \int_{B_{R}(y_{n})}|u_{n}|^{4}dx\geq \beta>0.
$$
Let us define $v_{n}=u_{n}(\cdot+y_{n})$, and we note that $v_{n}$ has a nontrivial weak limit $v$ in $H^{s}(\R^{3}, \R)$. It is clear that also $(v_{n})$ is a $(PS)_{c_{V_{0}}}$ sequence for $J_{0}$, and arguing as before we can deduce that $J_{0}(v)=c_{V_{0}}$. In conclusion, problem \eqref{APe} admits a ground state solution.\\
Now, let $u$ be a ground state for \eqref{APe}. By using $\varphi=u^{-}$ as test function in $\langle J'_{0}(u), \varphi\rangle=0$, it is easy to check that $u\geq 0$ in $\R^{3}$. In particular, observing that $\phi_{u}^{t}\geq 0$ and $f$ has a subcritical growth, we can argue as in Proposition $5.1.1$ in \cite{DMV} to see that $u\in L^{\infty}(\R^{3}, \R)$. 
In particular, we have
\begin{align*}
\phi_{u}^{t}(x)&= \int_{|y-x|\geq 1} \frac{|u(y)|^{2}}{|x-y|^{3-2t}}dy+ \int_{|y-x|<1} \frac{|u(y)|^{2}}{|x-y|^{3-2t}}dy\\
&\leq \|u\|_{L^{2}(\R^{3})}^{2}+\|u\|^{2}_{L^{\infty}(\R^{3})} \int_{|y-x|<1} \frac{1}{|x-y|^{3-2t}}dy\leq C,
\end{align*}
so that $g(x)=f(u^{2})u-\mu u-\phi_{u}^{t}u\in L^{\infty}(\R^{3}, \R)$. By applying Proposition $2.9$ in \cite{S} we can deduce that $u\in C^{0, \gamma}(\R^{3}, \R)$ for some $0<\gamma<1$. By using maximum principle (see Corollary $3.4$ in \cite{FJ}) we can see that $u>0$ in $\R^{3}$. 
Since $u\in C^{0, \alpha}(\R^{3}, \R)\cap L^{p}(\R^{3}, \R)$ for all $p\in [2, \infty)$, we can deduce that $u(x)\rightarrow 0$ as $|x|\rightarrow \infty$, so we can find $R>0$ such that $(-\Delta)^{s}u+\frac{V_{0}}{2}u\leq 0$ in $|x|>R$. By using Lemma $4.3$ in \cite{FQT} we know that there exists a positive function $w$ such that for $|x|>R$ (taking $R$ larger if necessary)  it holds $(-\Delta)^{s}w+\frac{V_{0}}{2}w\geq 0$ and $w(x)=\frac{C_{0}}{|x|^{3+2s}}$, for some $C_{0}>0$. 
 In view of the continuity of $u$ and $w$ there exists some constant $C_1>0$ such that $z:=u-C_{1}w\leq 0$ on $|x|=R$. Moreover, we can see that $(-\Delta)^{s}z+\frac{V_{0}}{2}z\geq 0$ in $|x|\geq R$. Then, it follows by the maximum principle that $z\leq 0$ in $|x|\geq R$, that is $0<u(x)\leq C_{1}w(x)\leq \frac{C_{2}}{|x|^{3+2s}}$ for all $|x|$ big enough. 

\end{proof}

Now we prove the following interesting relation between $c_{\e}$ and $c_{V_{0}}$.
\begin{lem}\label{AMlem1}
The numbers $c_{\e}$ and $c_{V_{0}}$ satisfy the following inequality
$$
\limsup_{\e\rightarrow 0} c_{\e}\leq c_{V_{0}}.
$$
\end{lem}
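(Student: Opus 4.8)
The plan is to construct an explicit competitor for the mountain pass value $c_{\e}$ by transplanting the ground state $w$ of the autonomous problem \eqref{APe} (whose existence is guaranteed by Lemma \ref{FS}) into the space $\h$ via the multiplication by the phase $e^{\imath A(0)\cdot x}$ provided by Lemma \ref{aux}. More precisely, I would fix $w\in H^{s}(\R^{3},\R)$ with $J_{0}(w)=c_{V_{0}}$, $w>0$, with the power-type decay obtained at the end of Lemma \ref{FS}; since this $w$ need not have compact support, I would first approximate it: choose a cutoff $\psi_{R}(x)=\psi(x/R)$ with $\psi\in C^{\infty}_{c}(\R^{3})$, $\psi\equiv 1$ on $B_{1}$, and set $w_{R}=\psi_{R}w$, so that $w_{R}\to w$ in $H^{s}(\R^{3},\R)$ as $R\to\infty$ and consequently $J_{0}(w_{R})\to c_{V_{0}}$ and $\int\phi^{t}_{|w_{R}|}|w_{R}|^{2}\,dx\to\int\phi^{t}_{|w|}|w|^{2}\,dx$ by Lemma \ref{poisson}.

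\textbf{Key steps.} First I would fix $R$ large and, for $\e>0$ small enough that $\supp(w_{R}(\cdot/\e))=\e\,\supp(w_{R})$... — more carefully, work in the rescaled variable: the function $x\mapsto w_{R}(x)$ has compact support $K$, and for $\e$ small the set $\e K\subset\Lambda$, equivalently $K\subset\Lambda_{\e}$, so that on $\supp(w_{R})$ the penalized nonlinearity $g_{\e}(x,t)=g(\e x,t)$ coincides with $f(t)$. Then set $u_{\e}:=e^{\imath A(0)\cdot x}w_{R}\in\h$ by Lemma \ref{aux}. The strategy is to estimate $c_{\e}\le\max_{T\ge 0}J_{\e}(Tu_{\e})$ using the path $T\mapsto Tu_{\e}$ (after checking $J_{\e}(T_{0}u_{\e})<0$ for $T_{0}$ large, which follows from Lemma \ref{MPG}(iii)). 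Let $T_{\e}>0$ be the point where the max is attained. The two facts I need are: (a) $[e^{\imath A(0)\cdot x}w_{R}]_{A_{\e}}^{2}\to[w_{R}]^{2}$ as $\e\to0$ — this is a standard diamagnetic-type computation exploiting that $A$ is continuous at $0$ (Hölder), so $A_{\e}(\frac{x+y}{2})=A(\e\frac{x+y}{2})\approx A(0)$ on the compact support region, making the magnetic Gagliardo seminorm converge to the ordinary one; (b) $\int V(\e x)|w_{R}|^{2}\,dx\to\int V_{0}|w_{R}|^{2}\,dx$ since $w_{R}$ has compact support meeting $M$... — actually since $V$ is continuous and $V(\e x)\to V(0)$; but one must be mildly careful that $V(0)$ need not equal $V_{0}$. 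To avoid this I would instead translate: pick $x_{0}\in M\subset\Lambda$ and use $w_{R}(\cdot-x_{0}/\e)$ composed with the phase, so that $V(\e x)\to V(x_{0})=V_{0}$ on the (shifted) support; the magnetic term behaves the same way since $A$ is merely continuous and the Gagliardo double integral is translation-compatible up to the phase modification — here one uses that changing $A$ by a constant only changes $u$ by a global phase on the full space but on the shifted support the relevant value is $A(\e\cdot)\to A(x_{0})$, and the phase $e^{\imath A(x_{0})\cdot x}$ is again handled by the analogue of Lemma \ref{aux}.

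Having these convergences, I get $\|u_{\e}\|_{\e}^{2}\to\|w_{R}\|_{0}^{2}$ and $\int G(\e x,|u_{\e}|^{2})\,dx=\int F(|w_{R}|^{2})\,dx$ (exactly, since $g_{\e}=f$ on the support) and $\int\phi^{t}_{|u_{\e}|}|u_{\e}|^{2}\,dx\to\int\phi^{t}_{|w_{R}|}|w_{R}|^{2}\,dx$ by Lemma \ref{poisson}(3)–(4) and translation invariance. Therefore $J_{\e}(Tu_{\e})\to J_{0}(Tw_{R})$ uniformly for $T$ in bounded sets, and the maximizers $T_{\e}$ stay bounded and bounded away from $0$ (by the uniform mountain-pass geometry in Lemma \ref{MPG}(ii), whose constants do not depend on $\e$), so $\limsup_{\e\to0}\max_{T\ge0}J_{\e}(Tu_{\e})\le\max_{T\ge0}J_{0}(Tw_{R})$. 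Using the characterization $c_{\e}=\inf_{u\ne0}\sup_{T\ge0}J_{\e}(Tu)$ from \cite{W}, and the analogous one for $c_{V_{0}}$, we conclude $\limsup_{\e\to0}c_{\e}\le\sup_{T\ge0}J_{0}(Tw_{R})$. Finally, since $w$ is a ground state it lies on $\N_{0}$, so $\sup_{T\ge0}J_{0}(Tw)=J_{0}(w)=c_{V_{0}}$; letting $R\to\infty$ and using $\sup_{T\ge0}J_{0}(Tw_{R})\to\sup_{T\ge0}J_{0}(Tw)=c_{V_{0}}$ (again by the convergences above plus continuity of $T\mapsto J_{0}(Tw)$ in $w$) gives $\limsup_{\e\to0}c_{\e}\le c_{V_{0}}$.

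\textbf{Main obstacle.} The delicate point is the convergence of the magnetic seminorm $[e^{\imath A(x_{0})\cdot x}w_{R}(\cdot-x_{0}/\e)]_{A_{\e}}^{2}\to[w_{R}]^{2}$; here one cannot just invoke the diamagnetic inequality (Lemma \ref{DI}) which only gives one direction. The right approach is to write the difference of the magnetic and non-magnetic Gagliardo integrands pointwise, bound $|e^{\imath(x-y)\cdot A_{\e}(\frac{x+y}{2})}-e^{\imath(x-y)\cdot A(x_{0})}|\le|x-y|\,|A_{\e}(\tfrac{x+y}{2})-A(x_{0})|$, and control $|A_{\e}(\tfrac{x+y}{2})-A(x_{0})|$ on the support region (diameter $O(1)$ around $x_{0}/\e$, so argument $\tfrac{x+y}{2}$ lies within $O(1)$ of $x_{0}/\e$, hence $\e\tfrac{x+y}{2}$ within $O(\e)$ of $x_{0}$), using the Hölder continuity of $A$ to get a bound $o(1)\cdot|x-y|$; the extra $|x-y|^{2}$ in the numerator against $|x-y|^{-(3+2s)}$ is integrable near the diagonal provided $s<1$, and at infinity one splits off the tail using the decay of $w_{R}$ (which is actually compactly supported, so this is immediate). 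This is essentially the same computation used to prove that $(-\Delta)^{s}_{A}\to-\Delta_{A}$-type limits hold, and it is routine but must be written with care to keep the constants uniform.
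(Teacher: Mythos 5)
Your proposal is correct and follows the same basic strategy as the paper: transplant the autonomous ground state $w$ into $\h$ as (phase)$\times$(cutoff)$\times w$, prove that the magnetic Gagliardo seminorm of this competitor converges to the ordinary one by expanding the square, bounding $|e^{\imath t}-1|$ by $\min\{2,|t|\}$ and exploiting the H\"older continuity of $A$ on a region where $\e\cdot(\text{argument})$ stays close to a fixed point, and then maximize $J_{\e}$ along the ray through the competitor, using $(f_3)$--$(f_4)$ to show the ray maximizers converge and $w\in\N_{0}$ to identify the limit with $c_{V_{0}}$. The genuine difference is in the cutoff: the paper uses the $\e$-dependent cutoff $\eta_{\e}(x)=\eta(\e x)$ (support of size $\delta/\e$) and a single limit, which forces it to invoke Lemma 5 of \cite{PP} for $[\eta_{\e}w]\to[w]$ and the power decay $w(y)\leq C|y|^{-(3+2s)}$ to control the weighted integral $\int|y|^{2\alpha}w^{2}\,dy$ arising in the H\"older estimate; you instead truncate at a fixed scale $R$ and take a double limit ($\e\to 0$ first, then $R\to\infty$), which removes both of those ingredients (the weighted integral is trivially finite on a compact support, and the far-off-diagonal tail is killed by $|e^{\imath t}-1|\leq 2$ and $\int_{|x-y|>M}|x-y|^{-3-2s}dx=O(M^{-2s})$ with $M\to\infty$ after $\e\to 0$), at the price of an additional continuity argument showing $\sup_{T\geq 0}J_{0}(Tw_{R})\to\sup_{T\geq 0}J_{0}(Tw)$, which is the same fibering-map argument as the $t_{\e}\to 1$ step and is unproblematic. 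You also correctly spotted that $V(0)$ need not equal $V_{0}$ and fixed it by translating to a point $x_{0}\in M$ before attaching the phase $e^{\imath A(x_{0})\cdot x}$; the paper's writeup silently normalizes $0\in M$, so your version is actually the more careful one. Both routes are valid; yours trades one reference and one decay estimate for one extra limiting layer.
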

\begin{proof}
Let $w\in H^{s}(\R^{3}, \R)$ be a positive ground state to the autonomous problem \eqref{APe} (see \cite{Teng, ZDS}), so $J'_{0}(w)=0$ and $J_{0}(w)=c_{V_{0}}$. We recall that $w\in C^{0, \gamma}(\R^{3}, \R)\cap L^{\infty}(\R^{3}, \R)$.
Moreover, using Lemma $4.3$ in \cite{FQT}, it is easy to check that $w$ satisfies the following decay estimate: 
\begin{equation}\label{remdecay}
0<w(x)\leq \frac{C}{|x|^{3+2s}} \mbox{ for all } |x|>1.
\end{equation}
Let $\eta\in C^{\infty}_{c}(\R^{3}, [0,1])$ be a cut-off function such that $\eta=1$ in a neighborhood of zero $B_{\frac{\delta}{2}}$ and $\supp(\eta)\subset B_{\delta}\subset \Lambda$ for some $\delta>0$. 

Let us define $w_{\e}(x):=\eta_{\e}(x)w(x) e^{\imath A(0)\cdot x}$, with $\eta_{\e}(x)=\eta(\e x)$ for $\e>0$, and we observe that $|w_{\e}|=\eta_{\e}w$ and $w_{\e}\in \h$ in view of Lemma \ref{aux}. 
Now we prove that
\begin{equation}\label{limwr}
\lim_{\e\rightarrow 0}\|w_{\e}\|^{2}_{\e}=\|w\|_{0}^{2}\in(0, \infty).
\end{equation}
Since it is clear that $\int_{\R^{3}} V_{\e}(x)|w_{\e}|^{2}dx\rightarrow \int_{\R^{3}} V_{0} |w|^{2}dx$, we only need to show that
\begin{equation}\label{limwr*}
\lim_{\e\rightarrow 0}[w_{\e}]^{2}_{A_{\e}}=[w]^{2}.
\end{equation}
By using Lemma $5$ in \cite{PP} we know that 
\begin{equation}\label{PPlem}
[\eta_{\e} w]\rightarrow [w] \mbox{ as } \e\rightarrow 0.
\end{equation}
On the other hand
\begin{align*}
[w_{\e}]_{A_{\e}}^{2}
&=\iint_{\R^{6}} \frac{|e^{\imath A(0)\cdot x}\eta_{\e}(x)w(x)-e^{\imath A_{\e}(\frac{x+y}{2})\cdot (x-y)}e^{\imath A(0)\cdot y} \eta_{\e}(y)w(y)|^{2}}{|x-y|^{3+2s}} dx dy \nonumber \\
&=[\eta_{\e} w]^{2}
+\iint_{\R^{6}} \frac{\eta_{\e}^2(y)w^2(y) |e^{\imath [A_{\e}(\frac{x+y}{2})-A(0)]\cdot (x-y)}-1|^{2}}{|x-y|^{3+2s}} dx dy\\
&\quad+2\Re \iint_{\R^{6}} \frac{(\eta_{\e}(x)w(x)-\eta_{\e}(y)w(y))\eta_{\e}(y)w(y)(1-e^{-\imath [A_{\e}(\frac{x+y}{2})-A(0)]\cdot (x-y)})}{|x-y|^{3+2s}} dx dy \\
&=: [\eta_{\e} w]^{2}+X_{\e}+2Y_{\e}.
\end{align*}
Then, in view of 
$|Y_{\e}|\leq [\eta_{\e} w] \sqrt{X_{\e}}$ and \eqref{PPlem}, it is suffices to prove that $X_{\e}\rightarrow 0$ as $\e\rightarrow 0$ to deduce that \eqref{limwr*} holds.
Let us note that for $0<\beta<\alpha/({1+\alpha-s})$, 
\begin{equation}\label{Ye}
\begin{split}
X_{\e}
&\leq \int_{\R^{3}} w^{2}(y) dy \int_{|x-y|\geq\e^{-\beta}} \frac{|e^{\imath [A_{\e}(\frac{x+y}{2})-A(0)]\cdot (x-y)}-1|^{2}}{|x-y|^{3+2s}} dx\\
&+\int_{\R^{3}} w^{2}(y) dy  \int_{|x-y|<\e^{-\beta}} \frac{|e^{\imath [A_{\e}(\frac{x+y}{2})-A(0)]\cdot (x-y)}-1|^{2}}{|x-y|^{3+2s}} dx \\
&=:X^{1}_{\e}+X^{2}_{\e}.
\end{split}
\end{equation}
Using $|e^{\imath t}-1|^{2}\leq 4$ and $w\in H^{s}(\R^{3}, \R)$, we get
\begin{equation}\label{Ye1}
X_{\e}^{1}\leq C \int_{\R^{3}} w^{2}(y) dy \int_{\e^{-\beta}}^\infty \rho^{-1-2s} d\rho\leq C \e^{2\beta s} \rightarrow 0.
\end{equation}
Since $|e^{\imath t}-1|^{2}\leq t^{2}$ for all $t\in \R$, $A\in C^{0,\alpha}(\R^3,\R^3)$ for $\alpha\in(0,1]$, and $|x+y|^{2}\leq 2(|x-y|^{2}+4|y|^{2})$, we have
\begin{equation}\label{Ye2}
\begin{split}
X^{2}_{\e}&
	\leq \int_{\R^{3}} w^{2}(y) dy  \int_{|x-y|<\e^{-\beta}} \frac{|A_{\e}\left(\frac{x+y}{2}\right)-A(0)|^{2} }{|x-y|^{3+2s-2}} dx \\
	&\leq C\e^{2\alpha} \int_{\R^{3}} w^{2}(y) dy  \int_{|x-y|<\e^{-\beta}} \frac{|x+y|^{2\alpha} }{|x-y|^{3+2s-2}} dx \\
	&\leq C\e^{2\alpha} \left(\int_{\R^{3}} w^{2}(y) dy  \int_{|x-y|<\e^{-\beta}} \frac{1 }{|x-y|^{3+2s-2-2\alpha}} dx\right.\\
	&\qquad\qquad+ \left. \int_{\R^{3}} |y|^{2\alpha} w^{2}(y) dy  \int_{|x-y|<\e^{-\beta}} \frac{1}{|x-y|^{3+2s-2}} dx\right) \\
	&=: C\e^{2\alpha} (X^{2, 1}_{\e}+X^{2, 2}_{\e}).
	\end{split}
	\end{equation}	
	Then
	\begin{equation}\label{Ye21}
	X^{2, 1}_{\e}
	= C  \int_{\R^{3}} w^{2}(y) dy \int_0^{\e^{-\beta}} \rho^{1+2\alpha-2s} d\rho
	\leq C\e^{-2\beta(1+\alpha-s)}.
	\end{equation}
	On the other hand, using \eqref{remdecay}, we infer that
	\begin{equation}\label{Ye22}
	\begin{split}
	 X^{2, 2}_{\e}
	 &\leq C  \int_{\R^{3}} |y|^{2\alpha} w^{2}(y) dy \int_0^{\e^{-\beta}}\rho^{1-2s} d\rho  \\
	&\leq C \e^{-2\beta(1-s)} \left[\int_{B_1(0)}  w^{2}(y) dy + \int_{B_1^c(0)} \frac{1}{|y|^{2(3+2s)-2\alpha}} dy \right]  \\
	&\leq C \e^{-2\beta(1-s)}.
	\end{split}
	\end{equation}
	Taking into account \eqref{Ye}, \eqref{Ye1}, \eqref{Ye2}, \eqref{Ye21} and \eqref{Ye22} we can conclude that $X_{\e}\rightarrow 0$. Therefore \eqref{limwr} holds.
	 Moreover, since $\eta_{\e}w$ strongly converges to $w$ in $H^{s}(\R^{3}, \R)$, we can use Lemma $2.4$-(5) in \cite{LZ} to see that
\begin{equation}\label{Poisslim}
\lim_{\e\rightarrow 0} \int_{\R^{3}}\phi_{|w_{\e}|}^{t} |w_{\e}|^{2}dx=\int_{\R^{3}}\phi_{w}^{t} w^{2}dx.
\end{equation}

Now, let $t_{\e}>0$ be the unique number such that 
\begin{equation*}
J_{\e}(t_{\e} w_{\e})=\max_{t\geq 0} J_{\e}(t w_{\e}).
\end{equation*}
Then $t_{\e}$ verifies 
\begin{equation}\label{AS1}
\|w_{\e}\|_{\e}^{2}+t_{\e}^{2}\int_{\R^{3}}\phi_{|w_{\e}|}^{t}|w_{\e}|^{2}dx=\int_{\R^{3}} g(\e x, t_{\e}^{2} |w_{\e}|^{2}) |w_{\e}|^{2}dx=\int_{\R^{3}} f(t_{\e}^{2} |w_{\e}|^{2}) |w_{\e}|^{2}dx
\end{equation}
where we used $supp(\eta)\subset \Lambda$ and $g=f$ on $\Lambda$.\\
Let us prove that $t_{\e}\rightarrow 1$ as $\e\rightarrow 0$. Using $\eta=1$ in $B_{\frac{\delta}{2}}$ and that $w$ is a continuous positive function, we can see that $(f_4)$ yields
$$
\frac{1}{t_{\e}^{2}}\|w_{\e}\|_{\e}^{2}+\int_{\R^{3}}\phi_{|w_{\e}|}^{t}|w_{\e}|^{2}dx\geq \frac{f(t_{\e}^{2}\alpha^{2}_{0})}{t_{\e}^{2}\alpha^{2}_{0}}\int_{B_{\frac{\delta}{2}}}|w|^{2}dx, 
$$
where $\alpha_{0}=\min_{\bar{B}_{\frac{\delta}{2}}} w>0$. So, if $t_{\e}\rightarrow \infty$ as $\e\rightarrow 0$, we can use $(f_3)$, \eqref{Poisslim} and \eqref{limwr} to deduce that $\int_{\R^{3}}\phi_{|w|,t}|w|^{2}dx= \infty$, that is a contradiction.
On the other hand, if $t_{\e}\rightarrow 0$ as $\e\rightarrow 0$, we can use the growth assumptions on $g$, \eqref{Poisslim}, \eqref{limwr} to infer that $\|w\|_{0}^{2}= 0$ which gives an absurd.
Therefore, $t_{\e}\rightarrow t_{0}\in (0, \infty)$ as $\e\rightarrow 0$.
Now, taking the limit as $\e\rightarrow 0$ in \eqref{AS1} and using \eqref{Poisslim}, \eqref{limwr}, we can deduce that 
\begin{equation}\label{AS2}
\frac{1}{t_{0}^{2}}\|w\|_{0}^{2}+\int_{\R^{3}}\phi_{|w|}^{t}|w|^{2}dx=\int_{\R^{3}} \frac{f(t_{0}^{2} |w|^{2})}{(t_{0}^{2}|w|^{2})} |w|^{4}dx.
\end{equation}
Then $t_{0}=1$ as a consequence of $w\in \mathcal{N}_{0}$ and $(f_4)$. By applying the Dominated Convergence Theorem, we know that 
$$
\int_{\R^{3}} F(|t_{\e}w_{\e}|^{2})\, dx\rightarrow \int_{\R^{3}} F(|w|^{2})\, dx
$$
so we have $\lim_{\e\rightarrow 0} J_{\e}(t_{\e} w_{\e})=J_{0}(w)=c_{V_{0}}$.
Since $c_{\e}\leq \max_{t\geq 0} J_{\e}(t w_{\e})=J_{\e}(t_{\e} w_{\e})$, we can infer  that
$\limsup_{\e\rightarrow 0} c_{\e}\leq c_{V_{0}}$.
\end{proof}

\noindent
Now, we prove the following useful compactness result.
\begin{lem}\label{prop3.3}
Let $\e_{n}\rightarrow 0$ and $(u_{n})\subset H^{s}_{\e_{n}}$ such that $J_{\e_{n}}(u_{n})=c_{\e_{n}}$ and $J'_{\e_{n}}(u_{n})=0$. Then there exists $(\tilde{y}_{n})\subset \R^{3}$ such that $v_{n}(x)=|u_{n}|(x+\tilde{y}_{n})$ has a convergent subsequence in $H^{s}(\R^{3}, \R)$. Moreover, up to a subsequence, $y_{n}=\e_{n} \tilde{y}_{n}\rightarrow y_{0}$ for some $y_{0}\in \Lambda$ such that $V(y_{0})=V_{0}$.
\end{lem}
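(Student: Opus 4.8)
The plan is to run the concentration–compactness scheme of del Pino and Felmer, adapted to the magnetic term and to the Schr\"odinger–Poisson nonlinearity.

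\emph{Step 1 (boundedness and a first nontrivial limit).} Since $J_{\e_n}(u_n)=c_{\e_n}$, $J'_{\e_n}(u_n)=0$ and $\limsup_n c_{\e_n}\le c_{V_0}$ by Lemma~\ref{AMlem1}, the computation of Lemma~\ref{PSc} (using $(g_3)$ and $k>\tfrac{\theta}{\theta-2}$) gives $\sup_n\|u_n\|_{\e_n}<\infty$, and then $(|u_n|)$ is bounded in $H^s(\R^3,\R)$ by Lemma~\ref{DI}. I would next prove that $|u_n|$ does not vanish: there exist $R,\beta>0$ and $(\tilde y_n)\subset\R^3$ with $\liminf_n\int_{B_R(\tilde y_n)}|u_n|^2\,dx\ge\beta$. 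Otherwise Lemma~\ref{Lions} would force $|u_n|\to0$ in $L^r(\R^3,\R)$ for every $r\in(2,2^*_s)$, and then $(g_1)$–$(g_2)$ together with $\phi^t_{|u_n|}\ge0$ (Lemma~\ref{poisson}(4)), inserted into $\langle J'_{\e_n}(u_n),u_n\rangle=0$, would give $\|u_n\|_{\e_n}\to0$ and hence $c_{\e_n}\to0$, contradicting the $\e$-independent lower bound $c_{\e_n}\ge\alpha>0$ of Lemma~\ref{MPG}. Setting $v_n:=|u_n|(\cdot+\tilde y_n)$, up to a subsequence $v_n\rightharpoonup v$ in $H^s(\R^3,\R)$, $v_n\to v$ in $L^r_{\rm loc}$ for $r\in[1,2^*_s)$ and a.e., and local compactness yields $v\not\equiv0$.

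\emph{Step 2 (location of the concentration points).} After translating by $\tilde y_n$ and using the diamagnetic inequality (Lemma~\ref{DI}), $V(\e_nx)\ge V_0$, $G(x,\cdot)\le F(\cdot)$, Lemma~\ref{poisson}(3) and $\tilde f\le f$, I would first record that $J_0(v_n)\le J_{\e_n}(u_n)=c_{\e_n}$ and $\langle J'_0(v_n),v_n\rangle\le0$. Set $y_n:=\e_n\tilde y_n$. To show that $(y_n)$ cannot drift away from $\Lambda$, suppose $\mathrm{dist}(y_n,\Lambda)\to\infty$ along a subsequence; then on every fixed ball the point $\e_nx+y_n$ eventually lies in $\Lambda^c$, where by $(g_3)$ one has $g(\cdot,t)t\le\tfrac{V(\cdot)}{k}t$ with $k>1$. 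Using a Kato-type inequality for the modulus of solutions of \eqref{MPe} — obtained by testing \eqref{MPe} with $\tfrac{u_n}{u_{\delta,n}}\varphi$, $\varphi\in C^\infty_c(\R^3,[0,\infty))$, $u_{\delta,n}=\sqrt{|u_n|^2+\delta^2}$, and letting $\delta\to0$ — one checks that $v_n$ is a weak subsolution of $(-\Delta)^s w+V(\e_nx+y_n)w\le g(\e_nx+y_n,w^2)w$ in $\R^3$; passing to the weak limit produces $(-\Delta)^s v+(1-\tfrac1k)V_0v\le0$ in $\R^3$ with $v\ge0$, and testing with $v$ forces $v\equiv0$, a contradiction. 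Hence $(y_n)$ is bounded and, up to a subsequence, $y_n\to y_0$; the same device excludes $y_0\in\Lambda^c$ (recall that $\Lambda$ is open, so $\partial\Lambda\cap\Lambda=\emptyset$), so $y_0\in\Lambda$ and $g(y_0,\cdot)=f$.

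\emph{Step 3 (limit equation, $V(y_0)=V_0$ and strong convergence).} Write $\hat u_n:=u_n(\cdot+\tilde y_n)$, which solve \eqref{MPe} with $A,V,g$ replaced by $A(\e_n\cdot+y_n),V(\e_n\cdot+y_n),g(\e_n\cdot+y_n,\cdot)$; since $\e_n\to0$ and $A$ is continuous, $A(\e_nx+y_n)\to A(y_0)$ uniformly on compact sets. Performing the gauge change $w\mapsto e^{-\imath A(y_0)\cdot x}w$ — which turns $(-\Delta)^s_{A(y_0)}$ into $(-\Delta)^s$ — and passing to the limit with the error estimates of Lemma~\ref{AMlem1}, one finds that $v\ge0$ is a weak solution of $(-\Delta)^sw+V(y_0)w+\phi^t_ww=f(w^2)w$, so $v$ lies on the Nehari manifold of the associated autonomous functional, and its energy is $\ge c_{V(y_0)}$, the corresponding mountain pass level. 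Invoking a uniform decay estimate for $v_n$ (the same Kato inequality combined with a comparison against the power-type subsolutions of \cite{FQT}, as in the argument behind Lemma~\ref{moser}) to ensure tightness, the translated energy inequality passes to the limit and gives $\liminf_n c_{\e_n}\ge c_{V(y_0)}\ge c_{V_0}$; combined with $\limsup_n c_{\e_n}\le c_{V_0}$ (Lemma~\ref{AMlem1}) and the strict monotonicity of the autonomous mountain pass level in the constant potential, this forces $V(y_0)=V_0$, whence $v\in\mathcal N_0$, $J_0(v)=c_{V_0}$ and $c_{\e_n}\to c_{V_0}$. A Brezis–Lieb splitting $v_n=v+z_n$, $z_n\rightharpoonup0$ — applied to the $H^s$-norm, to the Riesz term $\mathbb D(\cdot)$ (as in Lemma~\ref{PSc}) and to the subcritical nonlinearity, using $\langle J'_0(v),v\rangle=0$ to squeeze $\|z_n\|_0$ and the Lions lemma to exclude a residual bubble (which would carry at least the energy $c_{V_0}>0$) — then yields $\|z_n\|_0\to0$, i.e. $v_n\to v$ strongly in $H^s(\R^3,\R)$, which is the claimed minimizer.

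\emph{Main obstacle.} The central difficulty is the magnetic operator, which prevents passing to the limit in the equation for $u_n$ directly; it is overcome by (i) the diamagnetic inequality of Lemma~\ref{DI}, which transfers all energy estimates from the magnetic functional to the scalar one and produces the subsolution inequality for $|u_n|$, and (ii) the freezing $A(\e_nx+y_n)\to A(y_0)$, which reduces the limiting magnetic problem to the non-magnetic autonomous equation \eqref{APe} via a gauge transformation. A secondary delicate point is the total absence of information on $V$ at infinity: here the del Pino–Felmer penalization is indispensable, since outside $\Lambda$ the nonlinearity $g$ is dominated by $V/k$ with $k>1$, so that a weak maximum principle prevents the concentration point $y_n$ from escaping $\Lambda$.
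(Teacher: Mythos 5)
Your Step 1 coincides with the paper's opening (boundedness via Lemma \ref{AMlem1}, non-vanishing via Lemma \ref{Lions}, weak limit $v\not\equiv0$). Your Step 2 is a legitimate alternative to the paper's treatment of the boundedness of $(y_n)$: the paper rules out $|y_n|\to\infty$ by inserting $u_n\in\mathcal{N}_{\e_n}$ into the Nehari identity and using $\tilde f\le V_0/k$ together with the \emph{already established} strong convergence of $v_n$, whereas your subsolution/maximum-principle argument works directly from the weak limit. Two caveats there: the Kato-type test function $\tfrac{u_n}{u_{\delta,n}}\varphi$ needs the uniform $L^\infty$ bound from the Moser iteration, so that must be run first; and the device only excludes $y_0\notin\overline\Lambda$ — the case $y_0\in\partial\Lambda$ has to be eliminated afterwards via $V(y_0)=V_0$ and $(V_2)$, so you cannot yet assert $g(y_0,\cdot)=f$ at the end of Step 2.

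The genuine gap is in Step 3. To identify the limit equation for $v$ you must pass to the limit in the magnetic equation after the gauge change $w\mapsto e^{-\imath A(y_0)\cdot x}w$, and this requires controlling
\[
\iint_{\R^{6}}\frac{\bigl|e^{\imath[A(\e_n\frac{x+y}{2}+y_n)-A(y_0)]\cdot(x-y)}-1\bigr|^{2}}{|x-y|^{3+2s}}\,|u_n(y+\tilde y_n)|^{2}\,dx\,dy,
\]
which is exactly the quantity $X_{\e}$ estimated in Lemma \ref{AMlem1}. There the bound on $X^{2,2}_{\e}$ hinges on $\int|y|^{2\alpha}w^{2}\,dy<\infty$, i.e.\ on the power decay \eqref{remdecay} of the fixed ground state; for the sequence $(v_n)$ no such decay is available at this stage. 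You propose to ``invoke a uniform decay estimate for $v_n$'', but in the paper that decay (Lemma \ref{moser}) is proved only after — and using — the strong $H^{s}$-convergence of $v_n$ established in the present lemma, so your argument is circular as written. The same missing tightness undermines the inequality $\limsup_n\int G(\e_nx+y_n,v_n^{2})\,dx\le\int F(v^{2})\,dx$ needed for $\liminf_n c_{\e_n}\ge c_{V(y_0)}$, and the Brezis--Lieb splitting of the magnetic energy is delicate because the diamagnetic inequality is one-sided. The paper avoids all of this by never passing to the limit in the magnetic equation: it projects $v_n$ onto the autonomous Nehari manifold $\mathcal{N}_{0}$, uses Lemma \ref{DI} and $G\le F$ to get the sandwich $c_{V_0}\le J_0(t_nv_n)\le\max_{t\ge0}J_{\e_n}(tu_n)=J_{\e_n}(u_n)\to c_{V_0}$, applies the autonomous compactness Lemma \ref{FS} to conclude $t_nv_n\to t^{*}v$ strongly, and only then obtains $V(y_0)=V_0$ by a Fatou comparison rather than by monotonicity of $\mu\mapsto c_\mu$. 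To repair your scheme you must either prove the uniform decay of $v_n$ independently or replace the gauge-change step by this Nehari-projection argument.
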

\begin{proof}
Since $\langle J'_{\e_{n}}(u_{n}), u_{n}\rangle=0$, $J_{\e_{n}}(u_{n})= c_{\e_{n}}$ and using Lemma \ref{AMlem1}, we can see that $(u_{n})$ is bounded in $H^{s}_{\e_{n}}$. 
Then, there exists $C>0$ (independent of $n$) such that $\|u_{n}\|_{\e_{n}}\leq C$ for all $n\in \mathbb{N}$. Moreover, from Lemma \ref{DI}, we also know that $(|u_{n}|)$ is bounded in $H^{s}(\R^{3}, \R)$.\\
Now, we prove that there exist a sequence $(\tilde{y}_{n})\subset \R^{3}$, and constants $R>0$ and $\gamma>0$ such that
\begin{equation}\label{sacchi}
\liminf_{n\rightarrow \infty}\int_{B_{R}(\tilde{y}_{n})} |u_{n}|^{2} \, dx\geq \gamma>0.
\end{equation}
Assume by contradiction \eqref{sacchi} does not hold, so that, for all $R>0$ we get
$$
\lim_{n\rightarrow \infty}\sup_{y\in \R^{3}}\int_{B_{R}(y)} |u_{n}|^{2} \, dx=0.
$$
By using the boundedness of $(|u_{n}|)$ and Lemma \ref{Lions}, we know that $|u_{n}|\rightarrow 0$ in $L^{q}(\R^{3}, \R)$ for any $q\in (2, 2^{*}_{s})$. 
This fact and $(g_1)$ and $(g_2)$ yield
\begin{align}\label{glimiti}
\lim_{n\rightarrow \infty}\int_{\R^{3}} g(\e_{n} x, |u_{n}|^{2}) |u_{n}|^{2} \,dx=0
= \lim_{n\rightarrow \infty}\int_{\R^{3}} G(\e_{n} x, |u_{n}|^{2}) \, dx.
\end{align}
On the other hand, $|u_{n}|\rightarrow 0$ in $L^{\frac{12}{3+2t}}(\R^{3}, \R)$, so by Lemma \ref{poisson}-$(4)$ we deduce that
\begin{align}\label{CSSq1}
\int_{\R^{3}} \phi_{|u_{n}|}^{t}|u_{n}|^{2}dx\rightarrow 0.
\end{align}
Taking into account $\langle J'_{\e_{n}}(u_{n}), u_{n}\rangle=0$, \eqref{glimiti} and \eqref{CSSq1} we can  infer that 
$\|u_{n}\|_{\e_{n}}\rightarrow 0$ as $n\rightarrow \infty$. This is impossible because $(g_1)$, $(g_2)$ and $\langle J'_{\e_{n}}(u_{n}), u_{n}\rangle=0$ imply that there exists $\alpha_{0}>0$ such that $\|u_{n}\|^{2}_{\e_{n}}\geq \alpha_{0}$ for all $n\in \mathbb{N}$.
Now, we set $v_{n}(x)=|u_{n}|(x+\tilde{y}_{n})$. Then $(v_{n})$ is bounded in $H^{s}(\R^{3}, \R)$, and we may assume that 
$v_{n}\rightharpoonup v\not\equiv 0$ in $H^{s}(\R^{3}, \R)$  as $n\rightarrow \infty$.
Fix $t_{n}>0$ such that $\tilde{v}_{n}=t_{n} v_{n}\in \mathcal{N}_{0}$. In view of Lemma \ref{DI}, we have 
$$
c_{V_{0}}\leq J_{0}(\tilde{v}_{n})\leq \max_{t\geq 0}J_{\e_{n}}(tv_{n})= J_{\e_{n}}(u_{n})
$$
which together with Lemma \ref{AMlem1} yields $J_{0}(\tilde{v}_{n})\rightarrow c_{V_{0}}$. Then, $\tilde{v}_{n}\nrightarrow 0$ in $H^{s}(\R^{3}, \R)$.
Since $(v_{n})$ and $(\tilde{v}_{n})$ are bounded in $H^{s}(\R^{3}, \R)$ and $\tilde{v}_{n}\nrightarrow 0$  in $H^{s}(\R^{3}, \R)$, we deduce that $t_{n}\rightarrow t^{*}>0$. 
From the uniqueness of the weak limit, we can deduce that $\tilde{v}_{n}\rightharpoonup \tilde{v}=t^{*}v\not\equiv 0$ in $H^{s}(\R^{3}, \R)$,
and by using Lemma \ref{FS}, we can infer that 
\begin{equation}\label{elena}
\tilde{v}_{n}\rightarrow \tilde{v} \mbox{ in } H^{s}(\R^{3}, \R).
\end{equation} 
Therefore, $v_{n}\rightarrow v$ in $H^{s}(\R^{3}, \R)$ as $n\rightarrow \infty$.

Now, we define $y_{n}=\e_{n}\tilde{y}_{n}$ and we show that $(y_{n})$ admits a subsequence, still denoted by $y_{n}$, such that $y_{n}\rightarrow y_{0}$ for some $y_{0}\in \Lambda$ such that $V(y_{0})=V_{0}$. Firstly, we prove that $(y_{n})$ is bounded. Assume by contradiction that, up to a subsequence, $|y_{n}|\rightarrow \infty$ as $n\rightarrow \infty$. Take $R>0$ such that $\Lambda \subset B_{R}(0)$. Since we may suppose that  $|y_{n}|>2R$, we have that for any $z\in B_{R/\e_{n}}$ 
$$
|\e_{n}z+y_{n}|\geq |y_{n}|-|\e_{n}z|>R.
$$
Now, by using $(u_{n})\subset \N_{\e_{n}}$, $(V_{1})$, Lemma \ref{DI}, Lemma \ref{poisson} and the change of variable $x\mapsto z+\tilde{y}_{n}$ we observe that 
\begin{align}\label{pasq}
[v_{n}]^{2}+\int_{\R^{3}} V_{0} v_{n}^{2}\, dx &[v_{n}]^{2}+\int_{\R^{3}} V_{0} v_{n}^{2}\, dx+\int_{\R^{3}} \phi_{|v_{n}|}^{t}v_{n}^{2}\, dx\\
&\leq \int_{\R^{3}} g(\e_{n} x+y_{n}, |v_{n}|^{2}) |v_{n}|^{2} \, dx \nonumber\\
&\leq \int_{B_{\frac{R}{\e_{n}}}(0)} \tilde{f}(|v_{n}|^{2}) |v_{n}|^{2} \, dx+\int_{\R^{3}\setminus B_{\frac{R}{\e_{n}}}(0)} f(|v_{n}|^{2}) |v_{n}|^{2} \, dx.
\end{align}
Since $v_{n}\rightarrow v$ in $H^{s}(\R^{3}, \R)$ as $n\rightarrow \infty$ and $\tilde{f}(t)\leq \frac{V_{0}}{k}$, we can see that (\ref{pasq}) yields
$$
\min\left\{1, V_{0}\left(1-\frac{1}{k}\right) \right\} \left([v_{n}]^{2}+\int_{\R^{3}} |v_{n}|^{2}\, dx\right)=o_{n}(1),
$$
that is $v_{n}\rightarrow 0$ in $H^{s}(\R^{3}, \R)$ and this gives a contradiction. Thus, $(y_{n})$ is bounded and we may assume that $y_{n}\rightarrow y_{0}\in \R^{3}$. If $y_{0}\notin \overline{\Lambda}$, we can proceed as before to deduce that $v_{n}\rightarrow 0$ in $H^{s}(\R^{3}, \R)$. Therefore $y_{0}\in \overline{\Lambda}$. We observe that if $V(y_{0})=V_{0}$, then $y_{0}\notin \partial \Lambda$ in view of $(V_2)$. Then, it is enough to verify that $V(y_{0})=V_{0}$. Otherwise, we suppose that $V(y_{0})>V_{0}$, and
putting together (\ref{elena}), Fatou's Lemma, the invariance of  $\R^{3}$ by translations, Lemma \ref{DI} and Lemma \ref{AMlem1}, we have
\begin{align*}
c_{V_{0}}=J_{0}(\tilde{v})&<\frac{1}{2}[\tilde{v}]^{2}+\frac{1}{2}\int_{\R^{3}} V(y_{0})\tilde{v}^{2} \, dx+\frac{1}{4}\int_{\R^{3}}\phi_{|\tilde{v}|}^{t}\tilde{v}^{2}dx-\frac{1}{2}\int_{\R^{3}} F(|\tilde{v}|^{2})\, dx\\
&\leq \liminf_{n\rightarrow \infty}\left[\frac{1}{2}[\tilde{v}_{n}]^{2}+\frac{1}{2}\int_{\R^{3}} V(\e_{n}x+y_{n}) |\tilde{v}_{n}|^{2} \, dx+\frac{1}{4}\int_{\R^{3}}\phi_{|\tilde{v}_{n}|}^{t}|\tilde{v}_{n}|^{2}dx-\frac{1}{2}\int_{\R^{3}} F(|\tilde{v}_{n}|^{2})\, dx  \right] \\
&\leq \liminf_{n\rightarrow \infty}\left[\frac{t_{n}^{2}}{2}[|u_{n}|]^{2}+\frac{t_{n}^{2}}{2}\int_{\R^{3}} V(\e_{n}z) |u_{n}|^{2} \, dz+ \frac{t_{n}^{4}}{4}\int_{\R^{3}}\phi_{|u_{n}|}^{t}|u_{n}|^{2}dx -\frac{1}{2}\int_{\R^{3}} F(|t_{n} u_{n}|^{2})\, dz  \right] \\
&\leq \liminf_{n\rightarrow \infty} J_{\e_{n}}(t_{n} u_{n}) \leq \liminf_{n\rightarrow \infty} J_{\e_{n}}(u_{n})\leq c_{V_{0}}
\end{align*}
which is a contradiction. This ends the proof of this lemma.

\end{proof}

\section{Proof of Theorem \ref{thm1}}
This section is devoted to the proof of the main theorem of this work.
Firstly, we prove the following lemma which plays a fundamental role to show that the solutions of \eqref{Pe} are indeed solutions  to \eqref{P}.
\begin{lem}\label{moser} 
Let $\e_{n}\rightarrow 0$ and $u_{n}\in H^{s}_{\e_{n}}$ be a solution to \eqref{MPe}. 
Then $v_{n}=|u_{n}|(\cdot+\tilde{y}_{n})$ satisfies $v_{n}\in L^{\infty}(\R^{3},\R)$ and there exists $C>0$ such that 
$$
\|v_{n}\|_{L^{\infty}(\R^{3})}\leq C \mbox{ for all } n\in \mathbb{N},
$$
where $\tilde{y}_{n}$ is given by Lemma \ref{prop3.3}.
Moreover it holds
$$
\lim_{|x|\rightarrow \infty} v_{n}(x)=0 \mbox{ uniformly in } n\in \mathbb{N}.
$$
\end{lem}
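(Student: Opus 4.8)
\emph{Sketch of the strategy.} The plan is to argue in three stages: first establish a fractional Kato‑type inequality for $|u_n|$, then run a Moser iteration to get the uniform $L^{\infty}$ bound for $v_n$, and finally run a localized Moser iteration ``at infinity'' to get the uniform decay.

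\emph{Stage 1 (Kato inequality).} Fix $\varphi\in C_c^\infty(\R^3,[0,\infty))$ and, for $\delta>0$, set $u_{\delta,n}:=\sqrt{|u_n|^2+\delta^2}$ and $\psi_\delta:=\frac{u_n}{u_{\delta,n}}\varphi$, which lies in $H^{s}_{\e_{n}}$. Using $\psi_\delta$ as a test function in \eqref{MPe} and noting that $\Re(u_n\overline{\psi_\delta})=\frac{|u_n|^2}{u_{\delta,n}}\varphi$, one obtains an identity whose magnetic Gagliardo part is estimated from below through the pointwise inequality (valid since $\varphi\ge0$)
\begin{equation*}
\Re\!\Big[(a-e^{\imath\theta}b)\,\overline{\big(\tfrac{a}{\sqrt{|a|^2+\delta^2}}\,p-e^{\imath\theta}\tfrac{b}{\sqrt{|b|^2+\delta^2}}\,q\big)}\Big]\ \ge\ (|a|-|b|)\Big(\tfrac{|a|}{\sqrt{|a|^2+\delta^2}}\,p-\tfrac{|b|}{\sqrt{|b|^2+\delta^2}}\,q\Big),
\end{equation*}
applied with $a=u_n(x)$, $b=u_n(y)$, $p=\varphi(x)$, $q=\varphi(y)$, $\theta=A_{\e_{n}}(\tfrac{x+y}{2})\cdot(x-y)$. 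Letting $\delta\to0$ --- monotone convergence for the term carrying $V(\e_n x)\ge V_0$, nonnegativity of the $\phi^{t}_{|u_n|}$-term, dominated convergence for the $g$-term thanks to $(g_1)$--$(g_2)$ and $u_n\in H^{s}_{\e_{n}}$, and a Fatou-type argument for the Gagliardo form --- yields the weak inequality $(-\Delta)^s|u_n|+V_0|u_n|\le g_{\e_{n}}(x,|u_n|^2)|u_n|$ in $\R^3$. By translation invariance of $(-\Delta)^s$, with $y_n=\e_n\tilde y_n$ this gives
\begin{equation*}
(-\Delta)^s v_n+V_0 v_n\le g(\e_n x+y_n,v_n^2)\,v_n\qquad\text{in }\R^3 .
\end{equation*}

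\emph{Stage 2 (uniform $L^\infty$ bound).} By Lemma \ref{DI} and the boundedness of $(u_n)$ in $H^{s}_{\e_{n}}$ (see the proof of Lemma \ref{prop3.3}), $(v_n)$ is bounded in $H^s(\R^3,\R)$, hence in $L^{2^{*}_{s}}(\R^3,\R)$; moreover $(g_1)$--$(g_2)$ give $g(\e_n x+y_n,v_n^2)v_n\le C(v_n+v_n^{q-1})$ with $q-1<2^{*}_{s}-1$. A standard Moser iteration applied to the differential inequality above --- testing with truncations of $v_n^{2\beta+1}$, using the fractional Sobolev inequality, the elementary bound $(a-b)(a^{2\beta+1}-b^{2\beta+1})\ge\frac{2\beta+1}{(\beta+1)^2}(a^{\beta+1}-b^{\beta+1})^2$, and a Brezis--Kato splitting of the subcritical right-hand side --- then gives $\|v_n\|_{L^\infty(\R^3)}\le K$ with $K$ depending only on $s$, $V_0$, $q$, the constants in $(g_1)$--$(g_2)$ and $\sup_n\|v_n\|_{H^s}$, hence independent of $n$.

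\emph{Stage 3 (uniform decay) and the main obstacle.} By Lemma \ref{prop3.3}, $v_n\to v$ in $H^s(\R^3,\R)$, so $v_n\to v$ in $L^{2^{*}_{s}}(\R^3,\R)$, and a routine argument shows that for every $\tau>0$ there is $\rho_\tau>0$ with $\|v_n\|_{L^{2^{*}_{s}}(B_\rho^c)}<\tau$ for all $n\in\mathbb{N}$ and all $\rho\ge\rho_\tau$. We repeat the Moser iteration, now testing with $\eta_\rho^2 v_n^{2\beta+1}$ (truncated), where $\eta_\rho\in C^\infty(\R^3)$ satisfies $0\le\eta_\rho\le1$, $\eta_\rho\equiv0$ on $B_\rho$, $\eta_\rho\equiv1$ on $B_{2\rho}^c$ and $|\nabla\eta_\rho|\le C/\rho$; splitting the nonlocal term as in the proof of Lemma \ref{PSc}, the contribution of pairs $(x,y)$ with $x\in B_\rho$, $y\in B_{2\rho}^c$ (and vice versa) is bounded by $C\rho^{-2s}\|v_n\|_{L^2}^2$, the remaining local part is absorbed via the fractional Sobolev inequality, and the right-hand side, supported in $B_\rho^c$, is controlled by $\|v_n\|_{L^{2^{*}_{s}}(B_\rho^c)}$. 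Closing the iteration yields, for suitable $\gamma,\gamma'>0$, the estimate $\|v_n\|_{L^\infty(B_{2\rho}^c)}\le C(\rho^{-\gamma}+\|v_n\|_{L^{2^{*}_{s}}(B_\rho^c)}^{\gamma'})$ for all $n$, and letting $\rho\to\infty$ gives $v_n(x)\to0$ as $|x|\to\infty$ uniformly in $n$. I expect the delicate points to be exactly the two places where nonlocality bites: checking the sign of the magnetic Gagliardo integrand under the nonstandard test function $\frac{u_n}{u_{\delta,n}}\varphi$ and justifying the passage $\delta\to0$ in the Kato inequality; and bookkeeping the long-range tail terms when localizing the Moser iteration near infinity, which is what upgrades $L^{2^{*}_{s}}$-smallness of the tails of $v_n$ to $L^\infty$-smallness.
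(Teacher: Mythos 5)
Your three-stage strategy reaches the right conclusions, but it reorders the paper's argument in a way that opens a genuine (though repairable) gap. The paper runs the Moser iteration \emph{first}, and directly on the magnetic equation: it tests \eqref{MPe} with $u_{L,n}^{2(\beta-1)}u_{n}$ and uses a diamagnetic-type pointwise inequality to bound the magnetic Gagliardo form from below by the real Gagliardo form of $|u_{n}|$, obtaining $\|u_{n}\|_{L^{\infty}}\leq K$ uniformly in $n$ \emph{before} touching the Kato inequality. The reason for this order is exactly the point you flag as "delicate": to justify that $\psi_{\delta,n}=\frac{u_{n}}{u_{\delta,n}}\varphi$ is an admissible test function, one decomposes the increment $\psi_{\delta,n}(x)-\psi_{\delta,n}(y)e^{\imath A_{\e}(\frac{x+y}{2})\cdot(x-y)}$ into three pieces, and the piece $\bigl(\frac{u_{n}(y)}{u_{\delta,n}(x)}-\frac{u_{n}(y)}{u_{\delta,n}(y)}\bigr)\varphi(y)$ is controlled by $\frac{1}{\delta^{2}}\|u_{n}\|_{L^{\infty}}\|\varphi\|_{L^{\infty}}\,\bigl||u_{n}(x)|-|u_{n}(y)|\bigr|$; without the $L^{\infty}$ bound already in hand, the finiteness of the corresponding Gagliardo integral is not justified for a general $u_{n}\in H^{s}_{\e_{n}}$. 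So your Stage 1, as written, asserts membership in $H^{s}_{\e_{n}}$ without proof and your Stage 2 then depends on Stage 1. Either prove the uniform $L^{\infty}$ bound first on the magnetic equation (as the paper does) or insert an additional truncation; also note that your "Fatou-type argument" for the Gagliardo form should really be dominated convergence (the integrand $(|u_{n}(x)|-|u_{n}(y)|)(\varphi(x)-\varphi(y))$ changes sign; the paper dominates by $||u_{n}(x)|-|u_{n}(y)||\,|\varphi(x)-\varphi(y)|\,|x-y|^{-(3+2s)}\in L^{1}$), and that running Moser on the scalar subsolution inequality requires extending the Kato inequality from nonnegative $C^{\infty}_{c}$ test functions to the (nonsmooth, noncompactly supported) iteration test functions by density.

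Your Stage 3 is a genuinely different route from the paper's. The paper does not localize the Moser iteration at infinity: it introduces the auxiliary solutions $z_{n}$ of $(-\Delta)^{s}z_{n}+V_{0}z_{n}=g_{n}$, represents them as $z_{n}=\mathcal{K}*g_{n}$ with the Bessel kernel, deduces $z_{n}(x)\rightarrow 0$ as $|x|\rightarrow\infty$ uniformly in $n$ from the convergence of $g_{n}$ (as in \cite{AM, FQT}), and concludes by the comparison $0\leq v_{n}\leq z_{n}$. Your tail-localized iteration, fed by the uniform $L^{2^{*}_{s}}$-smallness of $\|v_{n}\|_{L^{2^{*}_{s}}(B_{\rho}^{c})}$ coming from the strong convergence in Lemma \ref{prop3.3}, is a legitimate self-contained alternative that avoids the Bessel-kernel machinery, at the price of heavier bookkeeping of the long-range cross terms (which, note, involve $v_{n}^{2\beta+2}$ rather than $v_{n}^{2}$ at each stage, so the global $L^{\infty}$ bound from Stage 2 is needed to close the iteration). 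Both approaches ultimately rest on the same compactness input.
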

\begin{proof}
For any $L>0$, we define $u_{L,n}:=\min\{|u_{n}|, L\}\geq 0$ and we set $v_{L, n}=u_{L,n}^{2(\beta-1)}u_{n}$ where $\beta>1$ will be chosen later.
Taking $v_{L, n}$ as test function in (\ref{MPe}) we can see that
\begin{align}\label{conto1FF}
&\Re\left(\iint_{\R^{6}} \frac{(u_{n}(x)-u_{n}(y)e^{\imath A(\frac{x+y}{2})\cdot (x-y)})}{|x-y|^{3+2s}} \overline{(u_{n}u_{L,n}^{2(\beta-1)}(x)-u_{n}u_{L,n}^{2(\beta-1)}(y)e^{\imath A(\frac{x+y}{2})\cdot (x-y)})} \, dx dy\right)   \nonumber \\
&=-\int_{\R^{3}}\phi_{|u_{n}|}^{t}|u_{n}|^{2}u_{L,n}^{2(\beta-1)}dx+\int_{\R^{3}} g(\e x, |u_{n}|^{2}) |u_{n}|^{2}u_{L,n}^{2(\beta-1)}  \,dx-\int_{\R^{3}} V(\e x) |u_{n}|^{2} u_{L,n}^{2(\beta-1)} \, dx.
\end{align}
Let us observe that
\begin{align*}
&\Re\left[(u_{n}(x)-u_{n}(y)e^{\imath A(\frac{x+y}{2})\cdot (x-y)})\overline{(u_{n}u_{L,n}^{2(\beta-1)}(x)-u_{n}u_{L,n}^{2(\beta-1)}(y)e^{\imath A(\frac{x+y}{2})\cdot (x-y)})}\right] \\
&=\Re\Bigl[|u_{n}(x)|^{2}v_{L}^{2(\beta-1)}(x)-u_{n}(x)\overline{u_{n}(y)} u_{L,n}^{2(\beta-1)}(y)e^{-\imath A(\frac{x+y}{2})\cdot (x-y)}-u_{n}(y)\overline{u_{n}(x)} u_{L,n}^{2(\beta-1)}(x) e^{\imath A(\frac{x+y}{2})\cdot (x-y)} \\
&+|u_{n}(y)|^{2}u_{L,n}^{2(\beta-1)}(y) \Bigr] \\
&\geq (|u_{n}(x)|^{2}u_{L,n}^{2(\beta-1)}(x)-|u_{n}(x)||u_{n}(y)|u_{L,n}^{2(\beta-1)}(y)-|u_{n}(y)||u_{n}(x)|u_{L,n}^{2(\beta-1)}(x)+|u_{n}(y)|^{2}u^{2(\beta-1)}_{L,n}(y) \\
&=(|u_{n}(x)|-|u_{n}(y)|)(|u_{n}(x)|u_{L,n}^{2(\beta-1)}(x)-|u_{n}(y)|u_{L,n}^{2(\beta-1)}(y)),
\end{align*}
from which we deduce that
\begin{align}\label{realeF}
&\Re\left(\iint_{\R^{6}} \frac{(u_{n}(x)-u_{n}(y)e^{\imath A(\frac{x+y}{2})\cdot (x-y)})}{|x-y|^{3+2s}} \overline{(u_{n}u_{L,n}^{2(\beta-1)}(x)-u_{n}u_{L,n}^{2(\beta-1)}(y)e^{\imath A(\frac{x+y}{2})\cdot (x-y)})} \, dx dy\right) \nonumber\\
&\geq \iint_{\R^{6}} \frac{(|u_{n}(x)|-|u_{n}(y)|)}{|x-y|^{3+2s}} (|u_{n}(x)|u_{L,n}^{2(\beta-1)}(x)-|u_{n}(y)|u_{L,n}^{2(\beta-1)}(y))\, dx dy.
\end{align}
For all $t\geq 0$, let us define
\begin{equation*}
\gamma(t)=\gamma_{L, \beta}(t)=t t_{L}^{2(\beta-1)}
\end{equation*}
where  $t_{L}=\min\{t, L\}$. 
Let us observe that, since $\gamma$ is an increasing function, then it holds
\begin{align*}
(a-b)(\gamma(a)- \gamma(b))\geq 0 \quad \mbox{ for any } a, b\in \R.
\end{align*}
Let us define the functions 
\begin{equation*}
\Lambda(t)=\frac{|t|^{2}}{2} \quad \mbox{ and } \quad \Gamma(t)=\int_{0}^{t} (\gamma'(\tau))^{\frac{1}{2}} d\tau. 
\end{equation*}
and we note that
\begin{equation}\label{Gg}
\Lambda'(a-b)(\gamma(a)-\gamma(b))\geq |\Gamma(a)-\Gamma(b)|^{2} \mbox{ for any } a, b\in\R. 
\end{equation}
Indeed, for any $a, b\in \R$ such that $a<b$, and using the Jensen inequality we have
\begin{align*}
\Lambda'(a-b)(\gamma(a)-\gamma(b))=(a-b)\int_{b}^{a} \gamma'(t)dt=(a-b)\int_{b}^{a} (\Gamma'(t))^{2}dt\geq \left(\int_{b}^{a} \Gamma'(t) dt\right)^{2}=(\Gamma(a)-\Gamma(b))^{2}.
\end{align*}
In view of \eqref{Gg} we can deduce that
\begin{align}\label{Gg1}
|\Gamma(|u_{n}(x)|)- \Gamma(|u_{n}(y)|)|^{2} \leq (|u_{n}(x)|- |u_{n}(y)|)((|u_{n}|u_{L,n}^{2(\beta-1)})(x)- (|u_{n}|u_{L,n}^{2(\beta-1)})(y)). 
\end{align}
Putting together \eqref{realeF} and \eqref{Gg1} we have
\begin{align}\label{conto1FFF}
\Re\left(\iint_{\R^{6}} \frac{(u_{n}(x)-u_{n}(y)e^{\imath A(\frac{x+y}{2})\cdot (x-y)})}{|x-y|^{3+2s}} \overline{(u_{n}u_{L,n}^{2(\beta-1)}(x)-u_{n}u_{L,n}^{2(\beta-1)}(y)e^{\imath A(\frac{x+y}{2})\cdot (x-y)})} \, dx dy\right) 
\geq [\Gamma(|u_{n}|)]^{2}.
\end{align}
Being $\Gamma(|u_{n}|)\geq \frac{1}{\beta} |u_{n}| u_{L,n}^{\beta-1}$ and recalling that $\mathcal{D}^{s,2}(\R^{3}, \R)\subset L^{\2}(\R^{3}, \R)$ (see \cite{DPV}), we get 
\begin{equation}\label{SS1}
[\Gamma(|u_{n}|)]^{2}\geq S_{*} \|\Gamma(|u_{n}|)\|^{2}_{L^{\2}(\R^{3})}\geq \left(\frac{1}{\beta}\right)^{2} S_{*}\||u_{n}| u_{L,n}^{\beta-1}\|^{2}_{L^{\2}(\R^{3})}.
\end{equation}
Taking into account \eqref{conto1FF}, \eqref{conto1FFF} and \eqref{SS1} we obtain
\begin{align}\label{BMS}
\left(\frac{1}{\beta}\right)^{2} S_{*}\||u_{n}| u_{L,n}^{\beta-1}\|^{2}_{L^{\2}(\R^{3})}+\int_{\R^{3}} V(\e x)|u_{n}|^{2}u_{L,n}^{2(\beta-1)} dx\leq \int_{\R^{3}} g(\e_{n}x, |u_{n}|^{2}) |u_{n}|^{2} u_{L,n}^{2(\beta-1)} dx.
\end{align}
On the other hand, from the assumptions $(g_1)$ and $(g_2)$, for any $\xi>0$ there exists $C_{\xi}>0$ such that
\begin{equation}\label{SS2}
g(\e x, t^{2})t^{2}\leq \xi |t|^{2}+C_{\xi}|t|^{\2} \mbox{ for all } t\in \R.
\end{equation}
Taking $\xi\in (0, V_{0})$ and using \eqref{BMS} and \eqref{SS2} and Lemma \ref{poisson} we can obtain that
\begin{equation}\label{simo1}
\|w_{L,n}\|_{L^{\2}(\R^{3})}^{2}\leq C \beta^{2} \int_{\R^{3}} |u_{n}|^{\2}u_{L,n}^{2(\beta-1)},
\end{equation}
where  we set $w_{L,n}:=|u_{n}| u_{L,n}^{\beta-1}$.
Now, take $\beta=\frac{\2}{2}$ and fix $R>0$. Observing that $0\leq u_{L,n}\leq |u_{n}|$ and applying H\"older inequality we have
\begin{align}\label{simo2}
\int_{\R^{3}} |u_{n}|^{\2}u_{L,n}^{2(\beta-1)}dx&=\int_{\R^{3}} |u_{n}|^{\2-2} |u_{n}|^{2} u_{L,n}^{\2-2}dx \nonumber\\
&=\int_{\R^{3}} |u_{n}|^{\2-2} (|u_{n}| u_{L,n}^{\frac{\2-2}{2}})^{2}dx \nonumber\\
&\leq \int_{\{|u_{n}|<R\}} R^{\2-2} |u_{n}|^{\2} dx+\int_{\{|u_{n}|>R\}} |u_{n}|^{\2-2} (|u_{n}| u_{L,n}^{\frac{\2-2}{2}})^{2}dx \nonumber\\
&\leq \int_{\{|u_{n}|<R\}} R^{\2-2} |u_{n}|^{\2} dx+\left(\int_{\{|u_{n}|>R\}} |u_{n}|^{\2} dx\right)^{\frac{\2-2}{\2}} \left(\int_{\R^{3}} (|u_{n}| u_{L,n}^{\frac{\2-2}{2}})^{\2}dx\right)^{\frac{2}{\2}}.
\end{align}
Since $(|u_{n}|)$ is bounded in $H^{s}(\R^{3}, \R)$, we can choose $R$ sufficiently large such that
\begin{equation}\label{simo3}
\left(\int_{\{|u_{n}|>R\}} |u_{n}|^{\2} dx\right)^{\frac{\2-2}{\2}}\leq  \frac{1}{2\beta^{2}}.
\end{equation}
In view of \eqref{simo1}, \eqref{simo2} and \eqref{simo3} we can infer
\begin{equation*}
\left(\int_{\R^{3}} (|u_{n}| u_{L,n}^{\frac{\2-2}{2}})^{\2} \right)^{\frac{2}{\2}}\leq C\beta^{2} \int_{\R^{3}} R^{\2-2} |u_{n}|^{\2} dx<\infty
\end{equation*}
and letting the limit as $L\rightarrow \infty$ we obtain $|u_{n}|\in L^{\frac{(\2)^{2}}{2}}(\R^{3},\R)$.\\
Now, using $0\leq u_{L,n}\leq |u_{n}|$ and taking the limit as $L\rightarrow \infty$ in \eqref{simo1} we have
\begin{equation*}
\||u_{n}|\|_{L^{\beta\2}(\R^{3})}^{2\beta}\leq C \beta^{2} \int_{\R^{3}} |u_{n}|^{\2+2(\beta-1)},
\end{equation*}
from which we deduce that
\begin{equation*}
\left(\int_{\R^{3}} |u_{n}|^{\beta\2} dx\right)^{\frac{1}{(\beta-1)\2}}\leq C \beta^{\frac{1}{\beta-1}} \left(\int_{\R^{3}} |u_{n}|^{\2+2(\beta-1)}\right)^{\frac{1}{2(\beta-1)}}.
\end{equation*}
For $m\geq 1$ we define $\beta_{m+1}$ inductively so that $\2+2(\beta_{m+1}-1)=\2 \beta_{m}$ and $\beta_{1}=\frac{\2}{2}$. \\
Then we can see that
\begin{equation*}
\left(\int_{\R^{3}} |u_{n}|^{\beta_{m+1}\2} dx\right)^{\frac{1}{(\beta_{m+1}-1)\2}}\leq C \beta_{m+1}^{\frac{1}{\beta_{m+1}-1}} \left(\int_{\R^{3}} |u_{n}|^{\2\beta_{m}}\right)^{\frac{1}{\2(\beta_{m}-1)}}.
\end{equation*}
Let us define
$$
D_{m}=\left(\int_{\R^{3}} |u_{n}|^{\2\beta_{m}}\right)^{\frac{1}{\2(\beta_{m}-1)}},
$$
and by using an iteration argument, we can find $C_{0}>0$ independent of $m$ such that 
$$
D_{m+1}\leq \prod_{k=1}^{m} C \beta_{k+1}^{\frac{1}{\beta_{k+1}-1}}  D_{1}\leq C_{0} D_{1}.
$$
Passing to the limit as $m\rightarrow \infty$ we find 
\begin{equation}\label{UBu}
\|u_{n}\|_{L^{\infty}(\R^{3})}\leq C_{0}D_{1}=:K \mbox{ for all } n\in \mathbb{N}.
\end{equation}
Clearly, by interpolation, we can deduce that $(|u_{n}|)$ strongly converges in $L^{r}(\R^{3}, \R)$ for all $r\in (2, \infty)$.
From the growth assumptions on $g$, also $g(\e x, |u_{n}|^{2})|u_{n}|$ strongly converges  in the same Lebesgue spaces. 

In what follows, we show that $|u_{n}|$ is a weak subsolution to 
\begin{equation}\label{Kato0}
\left\{
\begin{array}{ll}
(-\Delta)^{s}v+V_{0} v=g(\e x, v^{2})v &\mbox{ in } \R^{3} \\
v\geq 0 \quad \mbox{ in } \R^{3}.
\end{array}
\right.
\end{equation}
Fix $\varphi\in C^{\infty}_{c}(\R^{3}, \R)$ such that $\varphi\geq 0$, and we take $\psi_{\delta, n}=\frac{u_{n}}{u_{\delta, n}}\varphi$ as test function in \eqref{Pe}, where we set $u_{\delta,n}=\sqrt{|u_{n}|^{2}+\delta^{2}}$ for $\delta>0$. We note that $\psi_{\delta, n}\in H^{s}_{\e_{n}}$ for all $\delta>0$ and $n\in \mathbb{N}$. Indeed $\int_{\R^{3}} V(\e_{n} x) |\psi_{\delta,n}|^{2} dx\leq \int_{\supp(\varphi)} V(\e_{n} x)\varphi^{2} dx<\infty$. 
On the other hand, we can observe
\begin{align*}
\psi_{\delta,n}(x)-\psi_{\delta,n}(y)e^{\imath A_{\e}(\frac{x+y}{2})\cdot (x-y)}&=\left(\frac{u_{n}(x)}{u_{\delta,n}(x)}\right)\varphi(x)-\left(\frac{u_{n}(y)}{u_{\delta,n}(y)}\right)\varphi(y)e^{\imath A_{\e}(\frac{x+y}{2})\cdot (x-y)}\\
&=\left[\left(\frac{u_{n}(x)}{u_{\delta,n}(x)}\right)-\left(\frac{u_{n}(y)}{u_{\delta,n}(x)}\right)e^{\imath A_{\e}(\frac{x+y}{2})\cdot (x-y)}\right]\varphi(x) \\
&+\left[\varphi(x)-\varphi(y)\right] \left(\frac{u_{n}(y)}{u_{\delta,n}(x)}\right) e^{\imath A_{\e}(\frac{x+y}{2})\cdot (x-y)} \\
&+\left(\frac{u_{n}(y)}{u_{\delta,n}(x)}-\frac{u_{n}(y)}{u_{\delta,n}(y)}\right)\varphi(y) e^{\imath A_{\e}(\frac{x+y}{2})\cdot (x-y)}
\end{align*}
which gives
\begin{align*}
&|\psi_{\delta,n}(x)-\psi_{\delta,n}(y)e^{\imath A_{\e}(\frac{x+y}{2})\cdot (x-y)}|^{2} \\
&\leq \frac{4}{\delta^{2}}|u_{n}(x)-u_{n}(y)e^{\imath A_{\e}(\frac{x+y}{2})\cdot (x-y)}|^{2}\|\varphi\|^{2}_{L^{\infty}(\R^{3})} +\frac{4}{\delta^{2}}|\varphi(x)-\varphi(y)|^{2} \|u_{n}\|^{2}_{L^{\infty}(\R^{3})} \\
&+\frac{4}{\delta^{4}} \|u_{n}\|^{2}_{L^{\infty}(\R^{3})} \|\varphi\|^{2}_{L^{\infty}(\R^{3})} |u_{\delta,n}(y)-u_{\delta,n}(x)|^{2} \\
&\leq \frac{4}{\delta^{2}}|u_{n}(x)-u_{n}(y)e^{\imath A_{\e}(\frac{x+y}{2})\cdot (x-y)}|^{2}\|\varphi\|^{2}_{L^{\infty}(\R^{3})} +\frac{4K^{2}}{\delta^{2}}|\varphi(x)-\varphi(y)|^{2} \\
&+\frac{4K^{2}}{\delta^{4}} \|\varphi\|^{2}_{L^{\infty}(\R^{3})} ||u_{n}(y)|-|u_{n}(x)||^{2} 
\end{align*}
where we used 
$$
|z+w+k|^{2}\leq 4(|z|^{2}+|w|^{2}+|k|^{2}) \quad \forall z,w,k\in \C,
$$
$|e^{\imath t}|=1$ for all $t\in \R$, $u_{\delta,n}\geq \delta$, $|\frac{u_{n}}{u_{\delta,n}}|\leq 1$, \eqref{UBu} and the following inequality
$$
|\sqrt{|z|^{2}+\delta^{2}}-\sqrt{|w|^{2}+\delta^{2}}|\leq ||z|-|w|| \quad \forall z, w\in \C.
$$
Since $u_{n}\in H^{s}_{\e_{n}}$, $|u_{n}|\in H^{s}(\R^{3}, \R)$ (by Lemma \ref{DI}) and $\varphi\in C^{\infty}_{c}(\R^{3}, \R)$, we deduce that $\psi_{\delta,n}\in H^{s}_{\e_{n}}$.\\
Therefore
\begin{align}\label{Kato1}
&\Re\left[\iint_{\R^{6}} \frac{(u_{n}(x)-u_{n}(y)e^{\imath A_{\e}(\frac{x+y}{2})\cdot (x-y)})}{|x-y|^{3+2s}} \left(\frac{\overline{u_{n}(x)}}{u_{\delta,n}(x)}\varphi(x)-\frac{\overline{u_{n}(y)}}{u_{\delta,n}(y)}\varphi(y)e^{-\imath A_{\e}(\frac{x+y}{2})\cdot (x-y)}  \right) dx dy\right] \nonumber\\
&+\int_{\R^{3}} V(\e x)\frac{|u_{n}|^{2}}{u_{\delta,n}}\varphi dx+\int_{\R^{3}} \phi_{|u_{n}|}^{t} \frac{|u_{n}|^{2}}{u_{\delta,n}}\varphi dx=\int_{\R^{3}} g(\e x, |u_{n}|^{2})\frac{|u_{n}|^{2}}{u_{\delta,n}}\varphi dx.
\end{align}
Since $\Re(z)\leq |z|$ for all $z\in \C$ and  $|e^{\imath t}|=1$ for all $t\in \R$, we get
\begin{align}\label{alves1}
&\Re\left[(u_{n}(x)-u_{n}(y)e^{\imath A_{\e}(\frac{x+y}{2})\cdot (x-y)}) \left(\frac{\overline{u_{n}(x)}}{u_{\delta,n}(x)}\varphi(x)-\frac{\overline{u_{n}(y)}}{u_{\delta,n}(y)}\varphi(y)e^{-\imath A_{\e}(\frac{x+y}{2})\cdot (x-y)}  \right)\right] \nonumber\\
&=\Re\left[\frac{|u_{n}(x)|^{2}}{u_{\delta,n}(x)}\varphi(x)+\frac{|u_{n}(y)|^{2}}{u_{\delta,n}(y)}\varphi(y)-\frac{u_{n}(x)\overline{u_{n}(y)}}{u_{\delta,n}(y)}\varphi(y)e^{-\imath A_{\e}(\frac{x+y}{2})\cdot (x-y)} -\frac{u_{n}(y)\overline{u_{n}(x)}}{u_{\delta,n}(x)}\varphi(x)e^{\imath A_{\e}(\frac{x+y}{2})\cdot (x-y)}\right] \nonumber \\
&\geq \left[\frac{|u_{n}(x)|^{2}}{u_{\delta,n}(x)}\varphi(x)+\frac{|u_{n}(y)|^{2}}{u_{\delta,n}(y)}\varphi(y)-|u_{n}(x)|\frac{|u_{n}(y)|}{u_{\delta,n}(y)}\varphi(y)-|u_{n}(y)|\frac{|u_{n}(x)|}{u_{\delta,n}(x)}\varphi(x) \right].
\end{align}
Now, we can note that
\begin{align}\label{alves2}
&\frac{|u_{n}(x)|^{2}}{u_{\delta,n}(x)}\varphi(x)+\frac{|u_{n}(y)|^{2}}{u_{\delta,n}(y)}\varphi(y)-|u_{n}(x)|\frac{|u_{n}(y)|}{u_{\delta,n}(y)}\varphi(y)-|u_{n}(y)|\frac{|u_{n}(x)|}{u_{\delta,n}(x)}\varphi(x) \nonumber\\
&=  \frac{|u_{n}(x)|}{u_{\delta,n}(x)}(|u_{n}(x)|-|u_{n}(y)|)\varphi(x)-\frac{|u_{n}(y)|}{u_{\delta,n}(y)}(|u_{n}(x)|-|u_{n}(y)|)\varphi(y) \nonumber\\
&=\left[\frac{|u_{n}(x)|}{u_{\delta,n}(x)}(|u_{n}(x)|-|u_{n}(y)|)\varphi(x)-\frac{|u_{n}(x)|}{u_{\delta,n}(x)}(|u_{n}(x)|-|u_{n}(y)|)\varphi(y)\right] \nonumber\\
&+\left(\frac{|u_{n}(x)|}{u_{\delta,n}(x)}-\frac{|u_{n}(y)|}{u_{\delta,n}(y)} \right) (|u_{n}(x)|-|u_{n}(y)|)\varphi(y) \nonumber\\
&=\frac{|u_{n}(x)|}{u_{\delta,n}(x)}(|u_{n}(x)|-|u_{n}(y)|)(\varphi(x)-\varphi(y)) +\left(\frac{|u_{n}(x)|}{u_{\delta,n}(x)}-\frac{|u_{n}(y)|}{u_{\delta,n}(y)} \right) (|u_{n}(x)|-|u_{n}(y)|)\varphi(y) \nonumber\\
&\geq \frac{|u_{n}(x)|}{u_{\delta,n}(x)}(|u_{n}(x)|-|u_{n}(y)|)(\varphi(x)-\varphi(y)) 
\end{align}
where in the last inequality we used the fact that
$$
\left(\frac{|u_{n}(x)|}{u_{\delta,n}(x)}-\frac{|u_{n}(y)|}{u_{\delta,n}(y)} \right) (|u_{n}(x)|-|u_{n}(y)|)\varphi(y)\geq 0
$$
because
$$
h(t)=\frac{t}{\sqrt{t^{2}+\delta^{2}}} \mbox{ is increasing for } t\geq 0 \quad \mbox{ and } \quad \varphi\geq 0 \mbox{ in }\R^{3}.
$$

Observing that
$$
\frac{|\frac{|u_{n}(x)|}{u_{\delta,n}(x)}(|u_{n}(x)|-|u_{n}(y)|)(\varphi(x)-\varphi(y))|}{|x-y|^{N+2s}}\leq \frac{||u_{n}(x)|-|u_{n}(y)||}{|x-y|^{\frac{3+2s}{2}}} \frac{|\varphi(x)-\varphi(y)|}{|x-y|^{\frac{3+2s}{2}}}\in L^{1}(\R^{6}),
$$
and $\frac{|u_{n}(x)|}{u_{\delta,n}(x)}\rightarrow 1$ a.e. in $\R^{3}$ as $\delta\rightarrow 0$,
we can use \eqref{alves1}, \eqref{alves2} and the Dominated Convergence Theorem to deduce that
\begin{align}\label{Kato2}
&\limsup_{\delta\rightarrow 0} \Re\left[\iint_{\R^{6}} \frac{(u_{n}(x)-u_{n}(y)e^{\imath A_{\e}(\frac{x+y}{2})\cdot (x-y)})}{|x-y|^{3+2s}} \left(\frac{\overline{u_{n}(x)}}{u_{\delta,n}(x)}\varphi(x)-\frac{\overline{u_{n}(y)}}{u_{\delta,n}(y)}\varphi(y)e^{-\imath A_{\e}(\frac{x+y}{2})\cdot (x-y)}  \right) dx dy\right] \nonumber\\
&\geq \limsup_{\delta\rightarrow 0} \iint_{\R^{6}} \frac{|u_{n}(x)|}{u_{\delta,n}(x)}(|u_{n}(x)|-|u_{n}(y)|)(\varphi(x)-\varphi(y)) \frac{dx dy}{|x-y|^{3+2s}} \nonumber\\
&=\iint_{\R^{6}} \frac{(|u_{n}(x)|-|u_{n}(y)|)(\varphi(x)-\varphi(y))}{|x-y|^{3+2s}} dx dy.
\end{align}
We can also see that the Dominated Convergence Theorem  (we recall that $\frac{|u_{n}|^{2}}{u_{\delta, n}}\leq |u_{n}|$, Fatou's Lemma and $\varphi\in C^{\infty}_{c}(\R^{3}, \R)$) yield 
\begin{equation}\label{Kato3}
\lim_{\delta\rightarrow 0} \int_{\R^{3}} V(\e x)\frac{|u_{n}|^{2}}{u_{\delta,n}}\varphi dx=\int_{\R^{3}} V(\e x)|u_{n}|\varphi dx\geq \int_{\R^{3}} V_{0}|u_{n}|\varphi dx
\end{equation}
\begin{equation}\label{KatoP}
\liminf_{\delta\rightarrow 0} \int_{\R^{3}} \phi_{|u_{n}|}^{t} \frac{|u_{n}|^{2}}{u_{\delta,n}}\varphi dx\geq \int_{\R^{3}} \phi_{|u|}^{t} |u|\varphi dx\geq 0
\end{equation}
and
\begin{equation}\label{Kato4}
\lim_{\delta\rightarrow 0}  \int_{\R^{3}} g(\e x, |u_{n}|^{2})\frac{|u_{n}|^{2}}{u_{\delta,n}}\varphi dx=\int_{\R^{3}} g(\e x, |u_{n}|^{2}) |u_{n}|\varphi dx.
\end{equation}
Taking into account \eqref{Kato1}, \eqref{Kato2}, \eqref{KatoP}, \eqref{Kato3} and \eqref{Kato4} we can deduce that
\begin{align*}
\iint_{\R^{6}} \frac{(|u_{n}(x)|-|u_{n}(y)|)(\varphi(x)-\varphi(y))}{|x-y|^{3+2s}} dx dy+\int_{\R^{3}} V_{0}|u_{n}|\varphi dx\leq 
\int_{\R^{3}} g(\e x, |u_{n}|^{2}) |u_{n}|\varphi dx
\end{align*}
for any $\varphi\in C^{\infty}_{c}(\R^{3}, \R)$ such that $\varphi\geq 0$, that is $|u_{n}|$ is a weak subsolution to \eqref{Kato0}.
 
Now, we note that $v_{n}=|u_{n}|(\cdot+\tilde{y}_{n})$ solves
\begin{equation}\label{Pkat}
(-\Delta)^{s} v_{n} + V_{0}v_{n}\leq g(\e_{n} x+\e_{n}\tilde{y}_{n}, v_{n}^{2})v_{n} \mbox{ in } \R^{3}. 
\end{equation}
Let us denote by $z_{n}\in H^{s}(\R^{3}, \R)$ the unique solution to
\begin{equation}\label{US}
(-\Delta)^{s} z_{n} + V_{0}z_{n}=g_{n} \mbox{ in } \R^{3},
\end{equation}
where
$$
g_{n}:=g(\e_{n} x+\e_{n}\tilde{y}_{n}, v_{n}^{2})v_{n}\in L^{r}(\R^{3}, \R) \quad \forall r\in [2, \infty].
$$
Since \eqref{UBu} yields $\|v_{n}\|_{L^{\infty}(\R^{3})}\leq C$ for all $n\in \mathbb{N}$, by interpolation we know that $v_{n}\rightarrow v$ strongly converges in $L^{r}(\R^{3}, \R)$ for all $r\in (2, \infty)$, for some $v\in L^{r}(\R^{3}, \R)$. From the growth assumptions on $f$, we have $g_{n}\rightarrow  f(v^{2})v$ in $L^{r}(\R^{3}, \R)$ and $\|g_{n}\|_{L^{\infty}(\R^{3})}\leq C$ for all $n\in \mathbb{N}$.
In view of \cite{FQT}, we know that $z_{n}=\mathcal{K}*g_{n}$, where $\mathcal{K}$ is the Bessel kernel, and proceeding as in \cite{AM}, we can infer that $|z_{n}(x)|\rightarrow 0$ as $|x|\rightarrow \infty$ uniformly with respect to $n\in \mathbb{N}$.
Since $v_{n}$ solves \eqref{Pkat} and $z_{n}$ verifies \eqref{US}, it is easy to use a comparison argument to deduce that $0\leq v_{n}\leq z_{n}$ a.e. in $\R^{3}$ and for all $n\in \mathbb{N}$. Therefore $v_{n}(x)\rightarrow 0$ as $|x|\rightarrow \infty$ uniformly with respect to $n\in \mathbb{N}$.
\end{proof}

\noindent
Now, we are ready to give the proof of Theorem \ref{thm1}.
\begin{proof}[Proof of Thorem \ref{thm1}]
In view of Lemma \ref{prop3.3}, we can find $(\tilde{y}_{n})\subset \R^{3}$ such that $\e_{n}\tilde{y}_{n}\rightarrow y_{0}$ for some $y_{0} \in \Lambda$ such that $V(y_{0})=V_{0}$. 
Then there is $r>0$ such that, for some subsequence still denoted by itself, it holds $B_{r}(\tilde{y}_{n})\subset \Lambda$ for all $n\in \mathbb{N}$.
Thus $B_{\frac{r}{\e_{n}}}(\tilde{y}_{n})\subset \Lambda_{\e_{n}}$ $n\in \mathbb{N}$, and we can deduce that 
$
\R^{3}\setminus \Lambda_{\e_{n}}\subset \R^{3} \setminus B_{\frac{r}{\e_{n}}}(\tilde{y}_{n}) \mbox{ for any } n\in \mathbb{N}.
$ 
By using Lemma \ref{moser}, we know that there exists $R>0$ such that 
$$
v_{n}(x)<a \mbox{ for } |x|\geq R, n\in \mathbb{N},
$$ 
where $v_{n}(x)=|u_{\e_{n}}|(x+ \tilde{y}_{n})$. 
Thus $|u_{\e_{n}}(x)|<a$ for any $x\in \R^{N}\setminus B_{R}(\tilde{y}_{n})$ and $n\in \mathbb{N}$. Then there exists $\nu \in \mathbb{N}$ such that for any $n\geq \nu$ and $r/\e_{n}>R$ it holds 
$$
\R^{3}\setminus \Lambda_{\e_{n}}\subset \R^{3} \setminus B_{\frac{r}{\e_{n}}}(\tilde{y}_{n})\subset \R^{3}\setminus B_{R}(\tilde{y}_{n}),
$$ 
which gives $|u_{\e_{n}}(x)|<a$ for any $x\in \R^{3}\setminus \Lambda_{\e_{n}}$ and $n\geq \nu$.  \\
Therefore, there exists $\e_{0}>0$ such that problem \eqref{Pe} admits a nontrivial solution $u_{\e}$ for all $\e\in (0, \e_{0})$. 
Setting $\hat{u}_{\e}(x)=u_{\e}(x/\e)$, we can see that $\hat{u}_{\e}$ is a solution to the original problem (\ref{P}). 
Finally, we investigate the behavior of the maximum points of  $|u_{\e_{n}}|$. By using $(g_1)$, there exists $\gamma\in (0,a)$ such that
\begin{align}\label{4.18HZ}
g(\e x, t^{2})t^{2}\leq \frac{V_{0}}{2}t^{2}, \mbox{ for all } x\in \R^{3}, |t|\leq \gamma.
\end{align}
Arguing as before, we can take $R>0$ such that
\begin{align}\label{4.19HZ}
\|u_{\e_{n}}\|_{L^{\infty}(B^{c}_{R}(\tilde{y}_{n}))}<\gamma.
\end{align}
Up to a subsequence, we may also assume that
\begin{align}\label{4.20HZ}
\|u_{\e_{n}}\|_{L^{\infty}(B_{R}(\tilde{y}_{n}))}\geq \gamma.
\end{align}
Indeed, if \eqref{4.20HZ} does not hold, we have $\|u_{\e_{n}}\|_{L^{\infty}(\R^{3})}< \gamma$, and by using $J_{\e_{n}}'(u_{\e_{n}})=0$, \eqref{4.18HZ} and Lemma \ref{DI} we can see that 
$$
[|u_{\e_{n}}|]^{2}+\int_{\R^{3}}V_{0}|u_{\e_{n}}|^{2}dx\leq \|u_{\e_{n}}\|^{2}_{\e_{n}}+\int_{\R^{3}} \phi_{|u_{\e_{n}}|}^{t}|u_{\e_{n}}|^{2}dx=\int_{\R^{3}} g_{\e_{n}}(x, |u_{\e_{n}}|^{2})|u_{\e_{n}}|^{2}\,dx\leq \frac{V_{0}}{2}\int_{\R^{3}}|u_{\e_{n}}|^{2}\, dx
$$
that is $\|u_{\e_{n}}\|_{H^{s}(\R^{3})}=0$ which is a contradiction. Therefore \eqref{4.20HZ} holds true.
In view of \eqref{4.19HZ} and \eqref{4.20HZ}, we can see that the maximum points $p_{n}$ of $|u_{\e_{n}}|$ belong to $B_{R}(\tilde{y}_{n})$, that is $p_{n}=\tilde{y}_{n}+q_{n}$ for some $q_{n}\in B_{R}$. 
Since $\hat{u}_{n}(x)=u_{\e_{n}}(x/\e_{n})$ is a solution to \eqref{P}, we can deduce that a maximum point $\eta_{\e_{n}}$ of $|\hat{u}_{n}|$ is of the type $\eta_{\e_{n}}=\e_{n}\tilde{y}_{n}+\e_{n}q_{n}$. Since $q_{n}\in B_{R}$, $\e_{n}\tilde{y}_{n}\rightarrow y_{0}$ and $V(y_{0})=V_{0}$, we can use the continuity of $V$ to deduce that
$$
\lim_{n\rightarrow \infty} V(\eta_{\e_{n}})=V_{0}.
$$
Finally, we prove the power decay estimate of $|\hat{u}_{n}|$. By applying Lemma $4.3$ in \cite{FQT}, we can find a function $w$ such that 
\begin{align}\label{HZ1}
0<w(x)\leq \frac{C}{1+|x|^{3+2s}},
\end{align}
and
\begin{align}\label{HZ2}
(-\Delta)^{s} w+\frac{V_{0}}{2}w\geq 0 \mbox{ in } \R^{3}\setminus B_{R_{1}} 
\end{align}
for some suitable $R_{1}>0$. Invoking Lemma \ref{moser}, we know that
we can that $v_{n}(x)\rightarrow 0$ as $|x|\rightarrow \infty$ uniformly in $n\in \mathbb{N}$, so 
we can find there exists $R_{2}>0$ such that
\begin{equation}\label{hzero}
h_{n}=g(\e_{n}x+\e_{n}\tilde{y}_{n}, v_{n}^{2})v_{n}\leq \frac{V_{0}}{2}v_{n}  \mbox{ in } B_{R_{2}}^{c}.
\end{equation}
Let $w_{n}$ be the unique solution to 
$$
(-\Delta)^{s}w_{n}+V_{0}w_{n}=h_{n} \mbox{ in } \R^{3}.
$$
Then $w_{n}(x)\rightarrow 0$ as $|x|\rightarrow \infty$ uniformly in $n\in \mathbb{N}$, and by comparison $0\leq v_{n}\leq w_{n}$ in $\R^{3}$. By using \eqref{hzero} we can see that
\begin{align*}
(-\Delta)^{s}w_{n}+\frac{V_{0}}{2}w_{n}=h_{n}-\frac{V_{0}}{2}w_{n}\leq 0 \mbox{ in } B_{R_{2}}^{c}.
\end{align*}
Set $R_{3}=\max\{R_{1}, R_{2}\}$ and we define
\begin{align}\label{HZ4}
a=\inf_{B_{R_{3}}} w>0 \mbox{ and } \tilde{w}_{n}=(b+1)w-a w_{n}.
\end{align}
where $b=\sup_{n\in \mathbb{N}} \|w_{n}\|_{L^{\infty}(\R^{3})}<\infty$. 
We aim to prove that
\begin{equation}\label{HZ5}
\tilde{w}_{n}\geq 0 \mbox{ in } \R^{3}.
\end{equation}
We begin observing that
\begin{align}
&\lim_{|x|\rightarrow \infty} \sup_{n\in \mathbb{N}}\tilde{w}_{n}(x)=0,  \label{HZ0N} \\
&\tilde{w}_{n}\geq ba+w-ba>0 \mbox{ in } B_{R_{3}} \label{HZ0},\\
&(-\Delta)^{s} \tilde{w}_{n}+\frac{V_{0}}{2}\tilde{w}_{n}\geq 0 \mbox{ in } \R^{3}\setminus B_{R_{3}} \label{HZ00}.
\end{align}
We assume by contradiction that there exists a sequence $(\bar{x}_{j, n})\subset \R^{3}$ such that 
\begin{align}\label{HZ6}
\inf_{x\in \R^{3}} \tilde{w}_{n}(x)=\lim_{j\rightarrow \infty} \tilde{w}_{n}(\bar{x}_{j, n})<0. 
\end{align}
Clearly, from (\ref{HZ0N}), it follows that $(\bar{x}_{j, n})$ is bounded, and, up to subsequence, we may assume that there exists $\bar{x}_{n}\in \R^{N}$ such that $\bar{x}_{j, n}\rightarrow \bar{x}_{n}$ as $j\rightarrow \infty$. 
Then (\ref{HZ6}) implies that
\begin{align}\label{HZ7}
\inf_{x\in \R^{3}} \tilde{w}_{n}(x)= \tilde{w}_{n}(\bar{x}_{n})<0.
\end{align}
By using the minimality of $\bar{x}_{n}$ and the representation formula for the fractional Laplacian \cite{DPV}, we obtain that 
\begin{align}\label{HZ8}
(-\Delta)^{s}\tilde{w}_{n}(\bar{x}_{n})=\frac{C(3, s)}{2} \int_{\R^{3}} \frac{2\tilde{w}_{n}(\bar{x}_{n})-\tilde{w}_{n}(\bar{x}_{n}+\xi)-\tilde{w}_{n}(\bar{x}_{n}-\xi)}{|\xi|^{3+2s}} d\xi\leq 0.
\end{align}
In view of (\ref{HZ0}) and (\ref{HZ6}), we have $\bar{x}_{n}\in \R^{3}\setminus B_{R_{3}}$, and 
by using (\ref{HZ7}) and (\ref{HZ8}), we can conclude that 
$$
(-\Delta)^{s} \tilde{w}_{n}(\bar{x}_{n})+\frac{V_{0}}{2}\tilde{w}_{n}(\bar{x}_{n})<0,
$$
which is impossible due to (\ref{HZ00}).
Therefore, (\ref{HZ5}) is true and by using (\ref{HZ1}) and $v_{n}\leq w_{n}$ we have
\begin{align*}
0\leq v_{n}(x)\leq w_{n}(x)\leq \frac{(b+1)}{a}w(x)\leq \frac{\tilde{C}}{1+|x|^{3+2s}} \mbox{ for all } n\in \mathbb{N}, x\in \R^{3},
\end{align*}
for some constant $\tilde{C}>0$. 
Taking in mind the definition of $v_{n}$, we can infer that  
\begin{align*}
|\hat{u}_{n}|(x)&=|u_{\e_{n}}|\left(\frac{x}{\e_{n}}\right)=v_{n}\left(\frac{x}{\e_{n}}-\tilde{y}_{n}\right) \\
&\leq \frac{\tilde{C}}{1+|\frac{x}{\e_{n}}-\tilde{y}_{\e_{n}}|^{3+2s}} \\
&=\frac{\tilde{C} \e_{n}^{3+2s}}{\e_{n}^{3+2s}+|x- \e_{n} \tilde{y}_{\e_{n}}|^{3+2s}} \\
&\leq \frac{\tilde{C} \e_{n}^{3+2s}}{\e_{n}^{3+2s}+|x-\eta_{\e_{n}}|^{3+2s}}.
\end{align*}

\end{proof}

\end{document}